\newtheorem{Theorem}{Theorem}[section]
\newtheorem{Proposition}[Theorem]{Proposition}
\newtheorem{Corollary}[Theorem]{Corollary}
\newtheorem{Remark}[Theorem]{Remark}
\newtheorem{Example}[Theorem]{Example}
\def\QQ{P}
\def\empha{\em}
\def\bemphas{}
\def\pemphas{}
\def\fra{\mathfrak}
\def\Sigm{\mathcal S}
\def\MMM{M}
\def\Cat{\mathcal D}
\def\chain{\mathcal C}
\def\chainB{\mathcal C_{\BBB}}
\def\VV{\mathbf V}
\def\sou{s}
\def\tar{t}
\def\Cat{\mathcal D}
\def\rhoo{\rho}
\def\aalpha{\alpha}
\def\ccc{c}
\def\uuu{u}
\def\GGG{\Omega}
\def\GGGG{\Omega}
\def\BBB{B_{\GGG}}
\def\BB{B_{\GGG}}
\numberwithin{equation}{section}
\begin{document}

\title{\bf Equivariant cohomology over Lie groupoids and  Lie-Rinehart algebras}

\author{
J.~Huebschmann
\\[0.3cm]
 USTL, UFR de Math\'ematiques\\
CNRS-UMR 8524
\\
59655 Villeneuve d'Ascq C\'edex, France\\
Johannes.Huebschmann@math.univ-lille1.fr
 }
\maketitle
\begin{abstract}
{Using the language and terminology of relative homological algebra, 
in particular that of derived functors,
we introduce equivariant cohomology over a general Lie-Rinehart algebra
and equivariant de Rham cohomology over a locally trivial
Lie groupoid in terms of suitably defined monads 
(also known as triples) and  the associated standard constructions.
This extends a characterization of equivariant de Rham
cohomology in terms of derived functors developed earlier
for the special case where the Lie groupoid is an ordinary
Lie group, viewed as a Lie groupoid with a single object;
in that theory over a Lie group, the ordinary Bott-Dupont-Shulman-Stasheff
complex arises as an a posteriori object.
We prove that, given a locally trivial Lie groupoid $\GGG$
and a smooth $\GGG$-manifold $f\colon M \to \BBB$ 
over the space $\BBB$ of objects of $\GGG$,
the resulting $\GGG$-equivariant de Rham theory 
of $f$ boils down to the
ordinary equivariant de Rham theory of a
vertex manifold $f^{-1}(q)$ relative to the vertex group  
$\GGG^q_q$, for any vertex $q$
in the space $\BBB$ of objects of $\GGG$;
this implies that the 
equivariant de Rham cohomology introduced here 
coincides with the 
stack de Rham cohomology of the associated
transformation groupoid whence this stack de Rham cohomology
can be characterized as a relative derived functor.
We introduce a notion of cone on a Lie-Rinehart algebra
and in particular 
that of cone on a Lie algebroid. This cone is an indispensable tool
for the description of the requisite monads.}
\end{abstract}

\noindent
\\[0.3cm]
{\bf Subject classification:}~{Primary: 22A22 55N91 58H05; Secondary: 
14F40 17B65 17B66 18C15 18G10 22E65}
\\[0.3cm]
{\bf Keywords:}~ {Lie groupoid, Lie algebroid, Atiyah sequence,
Lie-Rinehart algebra, differentiable cohomology,
Lie-Rinehart cohomology, Borel construction,
equivariant cohomology,
equivariant de Rham cohomology relative to a groupoid,
equivariant cohomology relative to a Lie-Rinehart algebra,
monad and dual standard construction, comonad and standard construction,
relative derived functor, cone on a Lie-Rinehart algebra, cone on a 
Lie algebroid}

{\tableofcontents}

\section{Introduction}

Equivariant de Rham cohomology over a Lie group 
is usually defined in terms of a
certain bar-de Rham bicomplex; this bicomplex can be traced back, at least, to
\cite{bottone}, \cite{botshust}, \cite{duponone}, \cite{gsegatwo}, 
\cite{shulmone} 
and  serves, in de Rham theory, as a replacement for the Borel 
construction. In modern terminology, this bar-de Rham 
bicomplex yields the de Rham cohomology of the associated differentiable stack.
However the definition in terms of the bar-de Rham bicomplex hides
the origins of equivariant cohomology, which is the theory of derived
functors. 
Indeed the search
for understanding the impact of  symmetries encapsulated
in a group on global invariants
such as cohomology led to the 
cohomology of groups and thereafter to equivariant cohomology in general.
Once the masters had isolated the cohomology of groups,
they realized that the theory can be formalized in the language of 
derived functors. This led eventually to the description of derived functors
in terms of (co-)monads, also known as (co)triples, but we will
exclusively use the monad-comonad terminology.

While the cohomology of groups is nowadays well understood as an instance
of the theory of derived functors, 
it is less common to view equivariant cohomology 
as an instance of the theory of derived functors.
However, given a topological group $G$
and a $G$-space $X$, over an arbitrary commutative ground ring $R$,
the $G$-equivariant singular cohomology of $X$ can be characterized
as the differential $\mathrm{Tor}^{C_*(G)}(R,C^*(X))$.
Here $C_*(G)$ refers to the differential graded $R$-algebra of singular chains
on $G$ and $C^*(X)$ to the $R$-algebra of ordinary singular cochains on $X$,
turned into a $C_*(G)$-module through the $G$-action on $X$.
The standard object calculating this differential Tor is 
a bar construction; indeed, the  Borel construction is well known to be the
appropriate geometric analog of the corresponding bar construction.
There is a perfectly reasonable (co)monad which leads
to that differential Tor, and
the appropriate comonad yields the homotopy quotient or Borel construction
as an \lq\lq a posteriori\rq\rq\ object calculating equivariant cohomology.
It is, perhaps, also worthwhile noting that, within the
framework of relative homological algebra,
the description of the $G$-equivariant cohomology of $X$
as the differential $\mathrm{Tor}^{C_*(G)}(R,C^*(X))$
is self-explanatory and can be understood even without
explicit reference to the theory of monads etc.
In \cite{koszul} we have shown that homological perturbation theory,
applied to that relative derived functor, yields a conceptual
explanation of the phenomenon of Koszul duality.

In \cite{koszultw} we have introduced a monad 
which yields the 
equivariant de Rham cohomology with respect to a Lie group
as a suitably defined relative derived Ext-functor,
and we have also developed the corresponding
infinitesimal theory.
To capture 
global properties
which rely on symmetries of a certain geometric situation
more general than symmetries that can be encoded in a group,
e.~g. symmetries of a family of spaces rather than just of a space,
in the present paper we start building similar equivariant theories 
for groupoids and Lie-Rinehart algebras in the framework of
relative homological algebra.

The notions of injective module and injective resolution are fundamental.
They have been developed by the masters to cope with situations where
projective modules are not available.
In contrast to the case for projective
resolutions, the mechanism of {\em injective\/} resolutions 
is available in various situations where there are no projective objects;
also  the mechanism of injective resolutions can be  adapted
to take account of additional structure, e.~g. that of
{\em differentiability\/} of a module over a Lie group $G$;
indeed, given a Lie group $G$, the appropriate way to resolve an object of the
category of differentiable $G$-modules 
is by means of a {\em differentiably injective resolution\/}
in the sense of \cite{hochmost}.

The naive attempt to generalize 
equivariant cohomology over a Lie group
to a theory over a general Lie groupoid is bound to fail: There 
is no obvious way
to extend the bar-de Rham bicomplex
to a complex over a Lie groupoid to arrive at the desired cohomology theory.
Complications arise since the standard approach to
a relative derived functor in terms of 
the nonhomogeneous resolution associated with
the appropriate adjunction breaks down for groupoids.
This difficulty resides in the fact that, as observed
by K. Mackenzie in a remark on p.~285 of \cite{mackeigh},
for a general topological groupoid,
there is no analog of the action used 
by Hochschild and Mostow on p.~369 of \cite{hochmost}
which, for a Lie group $G$ and a vector space $V$, 
on the vector space $C^{\infty}(G,V)$ of smooth $V$-valued functions on $G$,
is given by left (or right) translation.
The putative extension of this kind of action would not be compatible
with the variance constraint imposed by the groupoid action.
See also Remark \ref{cd5} below.
Hence, for a general groupoid,
the ordinary naive construction of a relatively injective module
breaks down. 
The fact that,
over a general locally trivial locally compact groupoid, any continuous
module can be continuously embedded into a continuously injective one
is thus not immediate; it
has been established by K. Mackenzie in Theorem 1 of \cite{mackeigh}.

We will introduce equivariant cohomology over a general Lie-Rinehart algebra
and equivariant de Rham theory over a locally trivial
Lie groupoid in terms of suitably defined monads 
and the associated dual standard  constructions.
As for the terminology, we recall that, for purely formal 
or historical reasons,
one refers to the {\em standard construction associated with a comonad\/}
and to the 
{\em dual standard construction associated with a monad\/}.
These monads extend the constructions in the predecessor \cite{koszultw}
of the present paper
but, for reasons explained above, 
we cannot simply extend the monad
used in \cite{koszultw} to characterize ordinary
equivariant de Rham cohomology over a Lie group
and, indeed, in the present paper, we shall 
modify that monad so that the modified monad
extends to Lie groupoids.
We shall explain the details in Section 
\ref{equivgroupoid} below. In Theorem \ref{U6} 
we shall then prove that, 
given the locally trivial Lie groupoid $\GGG$ over $\BBB$ 
and the left $\GGG$-manifold
$f \colon M \to \BBB$ (fiber bundle endowed with a left $\GGG$-structure),
for any object $q\in \BBB$, our
$\GGG$-equivariant de Rham theory of $f$ 
is naturally isomorphic to the corresponding
ordinary $\GGG^q_q$-equivariant de Rham cohomology 
of the corresponding fiber $f^{-1}(q)$ over $q$; here $\GGG^q_q$
refers to the vertex group at $q$.
This is perfectly consistent with
Theorem 3 of \cite{mackeigh} which says that, for
a locally trivial topological groupoid, the rigid cohomology boils down to
the Hochschild-Mostow theory for any vertex group with the corresponding 
coefficients; see also Proposition \ref{prop9} below.

An indispensable tool in \cite{koszultw}
is the cone $C\fra g$ on an ordinary Lie algebra $\fra g$,
the cone being taken in the category of differential graded Lie algebras.
In the present paper, given the Lie groupoid $\GGG$, we 
accordingly introduce the {\em cone\/} on the Lie algebroid $\lambda_{\GGG}$
or, equivalently, the corresponding
cone on the corresponding Lie-Rinehart algebra
relative to the Atiyah sequence of the Lie groupoid;
see Section \ref{algebroids} below. This extension 
is not entirely immediate since the naive construction
of the cone on a Lie-Rinehart algebra,
which we explore in Section \ref{lierine} below,
is not the correct notion of cone needed for equivariant
de Rham cohomology relative to a Lie groupoid.
The cone defined in terms of the Atiyah sequence is an indispensable tool
for the description of the requisite monad defining 
the equivariant
de Rham cohomology relative to a Lie groupoid.

Monads, comonads, standard constructions and dual standard constructions
are explained in great detail in \cite{mackbtwo}.
The idea of a monad goes back to \cite{maclafiv}
and that of a standard construction to \cite{godebook}
where the 
canonical flasque resolution of a sheaf
is given as a suitable dual standard construction.
Despite its flexibility and vast range of possible
applications,  the theory of (co)monads, well known
in category theory circles, has hardly been 
used elsewhere in mathematics except in algebraic topology
where triples are quite common.
Our approach to equivariant de Rham theory 
relative to a general locally trivial Lie groupoid 
clearly shows that monads and
comonads deserve to be better known.

Here is a brief overview of the paper:
In Section \ref{monads} we recall some 
of the basic notions
of relative homological algebra used later in the paper, including
monads and the associated dual standard constructions, and we
illustrate these notions by means of certain examples involving groups and
Lie groups; there examples are of fundamental importance later in the paper.
In Section \ref{liegroupoids} we phrase the differentiable cohomology
of locally trivial Lie groupoids in a language tailored
to our purposes.
In Section \ref{comonads} we recall comonads and standard constructions,
including the familiar comonadic description as a simplicial $G$-set
or $G$-space of the universal object $EG$ 
associated with a Lie group $G$.
Given the $G$-module $V$, we will then recall that the 
cosimplicial object $\mathrm{Map}(EG,V)$ coincides
with the dual standard construction 
associated with the corresponding monad, cf. Proposition \ref{com1}.
In Section \ref{lierine} we develop an equivariant theory relative 
to a Lie-Rinehart algebra.
In Section \ref{ext} we explore extensions of Lie-Rinehart
algebras and introduce a notion of cone relative to an extension of
Lie-Rinehart algebras.
In Section \ref{algebroids} we use the material developed
before to introduce a notion of cone over the Lie algebroid associated
with a Lie groupoid.
In Section \ref{equivgroupoid} we then introduce equivariant
cohomology with respect to a locally trivial Lie groupoid
as a relative derived functor by means of a suitable monad.
In Section \ref{compare} we relate our approach with
other notions of Lie groupoid cohomology in the literature.
The term `cohomology' may have several meanings;
thus, rather than trying to develop the appropriate injective resolution,
one can  extend the construction of the
standard complex defining smooth Lie group cohomology
to the groupoid case;
the resulting
notion of cohomology has been explored in the literature,
e.~g. in \cite{crainic} and \cite{weinxu}. 
This kind of Lie groupoid cohomology 
does not involve differential forms at all; it 
arises from application of the functor $C^{\infty}$ to the nerve
of the groupoid,
a construction substantially different from the one we use.
While for smooth manifolds ordinary real cohomology
coincides with de Rham cohomology, under the present circumstances,
the relationship is less clear.
The naive attempt to recover the more direct definition
which consists in applying the functor $C^{\infty}$ to the nerve
is bound to fail since one would somehow 
try to compare apples and oranges.
Indeed, one cannot directly apply the de Rham functor
to the homotopy quotient arising from the nerve, 
and the de Rham cohomology of 
the associated stack is defined to be the total cohomology
of the resulting cosimplicial de Rham complex.
When the underlying Lie groupoid is actually an ordinary Lie group,
viewed as a Lie groupoid with a single object, this cosimplicial
de Rham complex comes down to the construction developed and explored
by Bott, Dupont, Shulman and Stasheff \cite{bottone}, \cite{botshust},
\cite{duponone}, \cite{shulmone}.
Prompted by a referee's report, we comment on the situation
in Section \ref{compare}.
In particular, exploiting the appropriate Morita equivalence,
we explain how our notion of equivariant cohomology
over a locally trivial Lie groupoid includes a
description of the corresponding
stack de Rham cohomology of the associated transformation groupoid
as a relative derived functor.
Morita equivalence
for the more direct definition is of course well known to hold, cf. e.~g.
\cite{moermcru}.
Under the circumstances of the present paper,
phrasing Morita equivalence is somewhat more delicate
since it would have to involve the coefficients as well,
and so far we do not understand Morita equivalence completely
for the constructions  developed here.
The equivalence between 
our notion of equivariant cohomology
and the corresponding
stack de Rham cohomology of the associated transformation groupoid
includes a version of Morita equivalence in a somewhat roundabout manner.
Under the circumstances of the paper,
the final form of Morita equivalence will presumably extend an old observation
of Seda's \cite{sedaone} to the effect that,
over a locally trivial topological groupoid,
the functor given by restriction to a vertex group has the functor which assigns to
a representation the associated induced fiber bundle
as its left-adjoint;
cf. also the proof of Proposition \ref{prop9} below. 
In a final section we discuss various open questions.

The appropriate categorical setting for many constructions
in the present paper
is that of complete locally convex Hausdorff spaces,
just as in \cite{hochmost} and \cite{mackeigh}.
We do not make this precise,
to avoid an orgy of details related with topological vector spaces
and leave the requisite details to the reader.
We only mention here that,
as on p.~376 of \cite{hochmost}, given e.~g. the topological vector space $A$
and the Lie group $G$, we topologize
$C^{\infty}(G,A)$ by taking for a fundamental 
system of neighborhoods of the origin of $C^{\infty}(G,A)$ 
the sets of the kind $N(C,D,U)$ where
$C$ ranges over the compact subsets of $G$, the constituent $D$
over the finite sets of differential operators on $G$, the constituent
$U$ over the neighborhoods of $0$ in $A$, and
$f$ being a member of $N(C,D,U)$ means that $\delta(f)(C)\subseteq U$
for every $\delta \in C$.
In this manner we topologize, in particular, a space of the kind
$C^{\infty}(G,\Gamma(\zeta))$ where $\zeta$ is a smooth vector bundle
and $\Gamma(\zeta))$ its space of smooth sections, suitably topologized.

We will use the notation Ext and Tor for the corresponding infinite 
sequence of relative derived functors, not just for the first derived
functors. This convention will be in force throughout.
 
In equivariant de Rham theory, there is a certain dichotomy between
left and right actions which creates some minor technical difficulties:
In the standard formalism, when a Lie group $G$ acts on a smooth manifold
$X$ from the left, the naturally induced $G$-action on the algebra
of smooth functions on $X$ 
or, more generally, on the de Rham complex of $X$,
is from the right
as is the induced infinitesimal action of the Lie algebra $\fra g$
of $G$. As a side remark, we note that 
this fact has actually created some confusion in the literature.
The formally appropriate approach to equivariant de Rham theory
in terms of the monad 
technology involves
right modules. 
This is the reason why, in \cite{koszultw}, we have built
the theory systematically for right $G$-modules.
However, when the Lie groupoid $\GGG$ 
with base manifold $\BBB$ acts on the fiber bundle $f\colon M \to \BBB$
from the left, the correct way to proceed is to
build the theory in such a way that 
on the object, corresponding to what was the algebra of functions
before or more generally the de Rham complex,
the induced $\GGG$-action is still from the left. 
In terms of the earlier language of an action of a Lie group
on a smooth manifold from the left, this simply means that we switch from
the induced right action on the functions or on the de Rham complex
to a left action in the standard way, an operation
which is always possible since, for a group, 
the group algebra is a Hopf algebra
in a canonical manner. This kind of switch is no longer
possible over a general groupoid, though, and there is a single
consistent procedure to build the theory,
either left-handed or right-handed, and we proceed
to built it left-handed; suffice it to mention that, had we decided to 
build the theory right-handed, we would among others have to write
differential operators on the right of the objects these operators apply to.
In particular, right modules over a Lie-Rinehart algebra
do not naively correspond to left modules; see \cite{bv}
where the situation is discussed in detail.
Piecing the various
items correctly  together and isolating the appropriate monads and categories
is, then, a somewhat laborious puzzle.

In the literature, 
the $G$-equivariant cohomology  $\mathrm H^*_G(\lambda)$
of a Lie algebroid $\lambda \colon E \to B$
acted upon by a compact Lie group $G$ has been explored,
cf. e.~g. \cite{brchroru}.
This theory is substantially 
different from ours, though, which is a theory where elements 
of a  Lie groupoid and of a Lie algebroid 
are viewed as {\em operators on a family of spaces\/}
that give rise to a corresponding equivariant theory 
whereas in equivariant Lie algebroid cohomology the group $G$
acts on the Lie algebroid and hence on its 
associated differential graded algebra of forms.

I am indebted to the referees whose remarks made me clarify
a number of issues and to K. Mackenzie for discussions related with 
his extension of the Hochschild-Mostow 
cohomology theory for topological groups \cite{hochmost} 
to locally trivial topological groupoids
and in particular with his construction of (relatively) injective
modules over a locally trivial topological groupoid \cite{mackeigh}.
I owe a special debt of gratitude to Jim Stasheff for a number
of comments which helped improve the exposition.

\section{Monads and relative cohomology}\label{monads}

Before going into detail
we note that we use the monad-comonad terminology 
exclusively
rather than the equivalent terminology \lq\lq
triple\rq\rq\ etc. 

Let $\mathcal M$ be a category, $\mathcal T\colon
\mathcal M \to \mathcal M$ an endofunctor, 
let $\mathcal I$ denote the identity functor of $\mathcal M$,
and let $\uuu\colon \mathcal I \to  \mathcal T$  
and
$
\mu \colon 
\mathcal T^2 \longrightarrow \mathcal T
$
be natural transformations.
Recall that the triple $(\mathcal T,\uuu, \mu)$ is said to be a {\em monad\/}
over $\mathcal M$ provided the following two diagrams commute:
\begin{equation*}
\begin{CD}
\mathcal T
@<{\mu}<<
\mathcal T^2
@.\phantom{xxxxxxx}@.
\mathcal T
@>=>>
\mathcal T
@<=<<
\mathcal T
\\
@A{\mu}AA
@A{\mathcal T\mu}AA
@.
@A{=}AA
@A{\mu}AA
@A{=}AA
\\
\mathcal T^2
@<{\mu \mathcal T}<<
\mathcal T^3
@.\phantom{xxxxxxx}@.
\mathcal I\mathcal T
@>{\uuu \mathcal T}>>
\mathcal T^2
@<{\mathcal T\uuu}<<
\mathcal T\mathcal I
\end{CD}
\end{equation*}
The natural transformations $\uuu$ and $\mu$ are referred to as the
{\em unit\/} and {\em composition\/}, respectively, of the monad.

The  
{\em dual standard construction\/} associated with the monad $(\mathcal T,\uuu, \mu)$,
cf. \cite{duskinon}, \cite{godebook} (\lq\lq construction fondamentale\rq\rq\ 
on p.~271), \cite{maclbotw},
 yields the cosimplicial object
\[
\left(\mathcal T^{n+1},  \varepsilon^j\colon \mathcal T^{n+1} \to \mathcal T^{n+2},
\eta^j\colon
\mathcal T^{n+2} \to \mathcal T^{n+1}
\right)_{n \in \mathbb N};
\]
here, for $n \geq 0$,
\begin{align}
\varepsilon^j&= \mathcal T^j \uuu \mathcal T^{n-j+1}\colon \mathcal T^{n+1} \to \mathcal T^{n+2},
\ j = 0, \ldots, n+1,
\\
\eta^j&= \mathcal T^j \mu \mathcal T^{n-j}\colon \mathcal T^{n+2} \to \mathcal T^{n+1},
\ j = 0, \ldots, n.
\end{align}
Thus, given the object $V$ of $\mathcal M$, the system
\[
\mathbf T(V)=\left(\mathcal T^{n+1}(V),\varepsilon^j,\eta^j\right)_{n \in \mathbb N}
\]
is a cosimplicial object in $\mathcal M$;
here we do not distinguish in notation
between the natural transformations  $\eta^j$ and $\varepsilon^j$ 
and the morphisms they induce after evaluation of
the corresponding functors in an object.
The dual notion of {\em comonad\/} will be reproduced
in Section \ref{comonads} below.

An adjunction is well known to determine a monad (and a comonad) 
\cite{maclbotw}:
Let $\Cat$ and $\mathcal M$ be categories, let $\mathcal G \colon \Cat \to \mathcal M$
be a functor, suppose that the functor $\square\colon \mathcal M \to \Cat$ 
is left-adjoint to $\mathcal G$, and let 
$$
\mathcal T= \mathcal G \square\colon \mathcal M \longrightarrow \mathcal M.
$$
Let $\mathcal I$ denote the identity functor,
let $\uuu\colon \mathcal I \to  \mathcal T$ be the {\it unit\/},
$\ccc \colon   \square \mathcal G \to \mathcal I$ the {\it counit\/}
of the adjunction, and let $\mu$ be the natural transformation
$$
\mu = \mathcal G \ccc \square\colon\mathcal G \square\mathcal G \square= 
\mathcal T^2 \longrightarrow \mathcal T =\mathcal G \square.
$$
The data $(\mathcal T,\uuu, \mu)$ constitute a {\it monad\/}
over the category $\mathcal M$.
The associated dual standard construction
then defines the relative
$\mathrm{Ext}_{(\mathcal M,\Cat)}$-functor, that is, the $\mathrm{Ext}$-functor
in the category $\mathcal M$ relative to the category $\Cat$.

\begin{Example}\label{ex1}
{\rm Let $R$ be a commutative ring,
$G$ a group,
$\mathrm{Mod}_R$ the category of $R$-modules,
$\mathrm{Mod}_{RG}$ that of right $RG$-modules,
let
\[
\mathcal G\colon \mathrm{Mod}_{R}\longrightarrow \mathrm{Mod}_{RG}
\]
be the familiar functor which assigns to the $R$-module $V$
the right $RG$-module $\mathrm{Map}(G, V)$, with right $G$-structure
being given by left translation in $G$, and let
\[
\square \colon \mathrm{Mod}_{RG}\longrightarrow \mathrm{Mod}_{R}
\]
be the forgetful functor.
The unit of the resulting adjunction is well known to be given by
the assignment to the right $RG$-module $V$ of
\[
\uuu_V\colon V \longrightarrow \mathrm{Map}(G, \square V),
\ v \longmapsto \uuu_v:G \to  \square V,\ \uuu_v(x) =vx, \ v \in V, x \in
G.
\]
Likewise it is a standard fact that 
the counit of the adjunction is given by
the assignment to the $R$-module $W$ of
\[
\ccc_W\colon \square \mathrm{Map}(G, W) \longrightarrow W,
\ h \longmapsto h(e),\ h\colon G \to W.
\]
Consequently the natural transformation
\[
\mu 
\colon \mathcal T^2 \longrightarrow \mathcal T 
\]
is given by the assignment to a right $RG$-module $V$ of the association
\[
\mu_V 
\colon \mathrm{Map}(G,\square\mathrm{Map}(G,\square V))
\longrightarrow \mathrm{Map}(G,\square V),\ 
(\mu_V (\Phi))(x)= \Phi(x)(e),\ x \in G,
\] 
where $\Phi$ ranges over maps of the kind $G \to \mathrm{Map}(G,\square V)$.
Given the right $RG$-module $V$,
the dual standard construction 
associated with $V$ and the resulting monad
$(\mathcal T,\uuu,\mu)$ 
yields 
the cosimplicial object 
\begin{equation}
\mathbf T(V)=\left(\mathrm{Map}(G^{\times (n+1)}, 
\square V),\varepsilon^j,\eta^j\right)_{n \in \mathbb N}
\label{T}
\end{equation}
in the category of right $RG$-modules,
and the associated chain complex $|\mathbf T(V)|$,
together with the injection $\uuu_V$,
is an injective resolution of $V$ in the category
of  right $RG$-modules. Given the right $RG$-module $W$,
the chain complex $\mathrm{Hom}_G(|\mathbf T(V)|,W)$ then defines
$\mathrm{Ext}_G(V,W)$.
For intelligibility we recall that the right $RG$-module structure
on the degree $n$ constituent
$\mathrm{Map}(G^{\times (n+1)}, 
\square V)$
is given by the association
\begin{equation}
\begin{aligned}
\mathrm{Map}(G^{\times (n+1)}, 
\square V)
\times G 
&\longrightarrow
\mathrm{Map}(G^{\times (n+1)}, 
\square V),\ (\alpha,x) \mapsto \alpha \cdot x,
\\
(\alpha \cdot x)(x_0,x_1,\ldots,x_n) &=\alpha (xx_0,x_1,\ldots,x_n),
\end{aligned}
\label{action1}
\end{equation}
where $\alpha \in \mathrm{Map}(G^{\times (n+1)}, 
\square V)$ and $x,x_0,\ldots, x_n \in G$.
All this is entirely standard and classical.}
\end{Example}

\begin{Example}\label{ex2}
{\rm This example is a variant of Example \ref{ex1},
for {\em Lie groups} rather than just discrete groups
and {\em smooth maps\/} rather than 
just set maps:
Let the ground ring be that of the reals, $\mathbb R$, and
let $G$ be a Lie group.
We will use the notion of differentiable $G$-module 
in the  sense of \cite{hochmost}, and we will interpret
the notation \lq\lq Map\rq\rq\ in the previous example as $C^{\infty}$, 
that is, as ordinary smooth maps.
Let $\Cat=\mathrm{Vect}$, the category of real vector spaces,
$\mathcal M=\mathrm{Mod}_{G}$ that of differentiable right $G$-modules, and let
$\mathcal G_G \colon \mathrm{Vect} \to  \mathrm{Mod}_{G}$
be the functor
which assigns to the real vector space
$V$ the 
$G$-representation
\[
\mathcal G_GV= C^{\infty}(G, V),
\]
endowed with the right $G$-module structure coming from 
left translation in $G$.
We use the font $\mathcal G$ merely for convenience since this is reminiscent
of the notation $G$ in \cite{maclbotw} 
for this kind of functor; this usage of the font $\mathcal G$
has nothing to do with our usage of the notation $G$ 
for the group variable.
The functor $\mathcal G_G$
is right adjoint to the forgetful functor $\square 
\colon \mathrm{Mod}_{G} \to  \mathrm{Vect}$
and hence defines a monad  $(\mathcal T,\uuu,\mu)$  over the category 
$\mathrm{Mod}_{G}$.
Given the differentiable right $G$-module $V$, 
the
{\em dual standard construction\/}
\begin{equation}
\mathbf T(V)=\left(C^{\infty}(G^{\times (n+1)}, 
\square V),\varepsilon^j,\eta^j\right)_{n \in \mathbb N}
\label{Tsmooth}
\end{equation}
associated with $V$  and 
the monad $(\mathcal T,\uuu,\mu)$
is a cosimplicial object in the category of differentiable right $G$-modules,
and the associated chain complex $|\mathbf T(V)|$,  together with the 
injection $\uuu_V$,
is a differentiably injective resolution of $V$
in the category of differentiable right $G$-modules;
indeed,
apart from the fact that we deal with 
right $G$-modules rather 
than left $G$-modules, this resolution is exactly one
of the kind considered on p.~369 of \cite{hochmost}.
Given the differentiable right $RG$-module $W$,
the chain complex $\mathrm{Hom}_G(|\mathbf T(V)|,W)$ then defines
the differentiable 
\[
\mathrm{Ext}_G(V,W)=\mathrm{Ext}_{(\mathcal M,\Cat)}(V,W),
\]
that is, the Ext in the category of differentiable right $G$-modules
relative to the category of ordinary real vector spaces.
The resolution $|\mathbf T(V)|$ of $V$
defines, in particular, the differentiable cohomology 
of $G$ with values in $V$
in the sense of \cite{hochmost}, and we refer to that paper
for details.
}
\end{Example}

\begin{Example}\label{ex3}
{\rm To prepare for our ultimate goal to 
develop an equivariant cohomology theory 
with respect to a Lie groupoid, we will now give an alternate
description of the monad in
Example \ref{ex1}: Thus, let $R$ be an arbitrary commutative ground ring,
let $\mathcal U\colon \mathrm{Mod}_{RG}\longrightarrow \mathrm{Mod}_{RG}$
be the functor which assigns to the right $RG$-module $V$ the
right $RG$-module $\mathcal U(V)=\mathrm{Map}(G,V)$,
endowed with the right {\em diagonal\/} action
\begin{equation}
\mathrm {Map}(G,V)\times G \longrightarrow \mathrm{Map}(G,V),\ 
(\rhoo,x)\longmapsto \rhoo \cdot x
\label{diag22}
\end{equation}
given by
\[
(\rhoo\cdot x)(y) = (\rhoo(xy))x,\ x, y \in G, \rhoo \colon G \to V,
\]
let $\omega$ be the natural transformation given by
the assignment to the right $RG$-module $V$ of
\[
\omega=\omega_V\colon V \longrightarrow \mathrm{Map}(G, V),
\ v \longmapsto \omega_v:G \to V,\ \omega_v(x) =v, \ v \in V, x
\in G,
\]
and let
\[
\nu\colon \mathcal U^2 \longrightarrow \mathcal U
\]
be 
the natural transformation
given by the assignment to
a right $RG$-module $V$ of the association
\[
\nu_V 
\colon \mathrm{Map}(G,\mathrm{Map}(G,V))
\longrightarrow \mathrm{Map}(G,V),\ 
(\nu_V (\Psi))(x)= \Psi(x)(x),\ x \in G,
\] 
where $\Psi$ ranges over maps of the kind $G \to \mathrm{Map}(G,V)$.
The system $(\mathcal U,\omega, \nu)$
is a monad over the category $\mathrm{Mod}_{RG}$,
indeed, an alternate description of the monad $(\mathcal T,\uuu,\mu)$
in Example \ref{ex1}, as we will show shortly.
The following is well known and classical.

\begin{Proposition} \label{prop11} Let $V$ be a
right $G$-module. Relative to the right $G$-module structures
on $\mathrm{Map}(G,V)$ and  $\mathrm{Map}(G,\square V)$, 
the map
\begin{equation}
\vartheta=\vartheta_V\colon 
\mathrm{Map}(G,V)\longrightarrow \mathrm{Map}(G,\square V)
\label{phii}
\end{equation}
given by
\[
(\vartheta_V(\rhoo))(y)=(\rhoo(y))y
\]
is a natural isomorphism of right $G$-modules, and
\begin{equation}
\begin{CD}
V@>{\omega_V}>> \mathrm{Map}(G,V) 
\\
@V{\mathrm{Id}}VV
@VV{\vartheta_V}V
\\
V @>>{\uuu_V}> \mathrm{Map}(G,\square V)
\end{CD}
\label{CD11}
\end{equation}
is a commutative diagram in the category of right $G$-modules.
Furthermore, 
the diagram
\begin{equation}
\begin{CD}
\mathrm{Map}(G,\mathrm{Map}(G,V))
@>{\nu_V}>> \mathrm{Map}(G, V)
\\
@V{\mathrm{Map}(G,\vartheta_V)}VV
@V{\vartheta_V}VV
\\
\mathrm{Map}(G,\mathrm{Map}(G,\square V))
@. \mathrm{Map}(G, \square V)
\\
@V{\vartheta_{\mathrm{Map}(G,\square V)}}VV
@V{\mathrm{Id}}VV
\\
\mathrm{Map}(G,\square\mathrm{Map}(G,\square V))
@>{\mu_V}>> \mathrm{Map}(G,\square V)
\end{CD}
\label{CD2}
\end{equation}
is commutative.
\end{Proposition}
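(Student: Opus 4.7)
The plan is to verify the three assertions by direct unwinding of the definitions; the only source of potential confusion is the interplay between the diagonal action \eqref{diag22} on $\mathrm{Map}(G,V)$ and the translation action on $\mathrm{Map}(G,\square V)$ of the kind spelled out in \eqref{action1}.

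For the bijectivity part of the first assertion, I would exhibit the inverse explicitly by $(\vartheta_V^{-1}(\alpha))(y)=\alpha(y)y^{-1}$ and check $\vartheta_V\circ\vartheta_V^{-1}=\mathrm{Id}=\vartheta_V^{-1}\circ\vartheta_V$ by inspection. Right $G$-equivariance then amounts to the one-line computation
\[
(\vartheta_V(\rhoo\cdot x))(y)=((\rhoo(xy))x)y=(\rhoo(xy))(xy)=(\vartheta_V(\rhoo)\cdot x)(y),
\]
whose middle equality is associativity of the $G$-action on $V$ and whose outer ones unwind \eqref{diag22} and \eqref{action1}, respectively. Naturality in $V$ is manifest from the formula, since $\vartheta_V$ depends on no structural datum of $V$ beyond its underlying set and action.

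Commutativity of \eqref{CD11} is immediate: $\omega_V(v)$ is the constant map $y\mapsto v$, so $(\vartheta_V\omega_V(v))(y)=vy=(\uuu_V(v))(y)$.

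The substantive point is \eqref{CD2}. Given $\Psi\in \mathrm{Map}(G,\mathrm{Map}(G,V))$, the composition $\vartheta_V\circ\nu_V$ sends $\Psi$ to the map $y\mapsto(\Psi(y)(y))y$, directly from the definitions of $\nu_V$ and $\vartheta_V$. For the other composition, $\mathrm{Map}(G,\vartheta_V)(\Psi)$ is the map $x\mapsto\vartheta_V(\Psi(x))=(z\mapsto(\Psi(x)(z))z)$; applying $\vartheta_{\mathrm{Map}(G,\square V)}$ then acts by the translation structure of the target $\mathrm{Map}(G,\square V)$, producing the map $y\mapsto (z\mapsto(\Psi(y)(yz))(yz))$; finally $\mu_V$ evaluates the inner argument at $e$, giving $y\mapsto(\Psi(y)(y))y$, which matches. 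The main obstacle is simply to keep straight which of the two right $G$-module structures each instance of $\vartheta$ in \eqref{CD2} refers to; once the formulas are written side by side, the equality follows from associativity of the action without further input.
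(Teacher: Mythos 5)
Your proof is correct and proceeds by the same direct unwinding of the definitions as the paper's own argument; the computations verifying \eqref{CD11} and \eqref{CD2} are essentially identical to those in the text. You are in fact slightly more complete than the paper, which leaves the bijectivity and equivariance of $\vartheta_V$ implicit (as ``well known and classical'') whereas you exhibit the inverse $(\vartheta_V^{-1}(\alpha))(y)=\alpha(y)y^{-1}$ and check equivariance explicitly, correctly keeping track of which $G$-module structure each instance of $\vartheta$ refers to.
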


\begin{proof}
Indeed, given $y \in G$ and  $\rhoo \in \mathrm{Map}(G,V)$,
\[
(\vartheta_V \rhoo)(y) = ((\vartheta_V \rhoo)\cdot y) (e) =
(\vartheta_V (\rhoo \cdot y))(e) =  (\rhoo \cdot y)(e) =
(\rhoo (y))y .
\]
Consequently the diagram \eqref{CD11} is commutative.
Furthermore,
given $\Psi \colon G \to \mathrm{Map}(G,V)$ and $x \in G$,
\begin{align*}
(\vartheta_V(\nu_V \Psi))(x) &= ((\Psi(x))(x))x
\\
(\mu_V(\vartheta_{\mathrm{Map}(G,\square V)}(\mathrm{Map}(G,\vartheta_V))(\Psi)))(x)
&=(\vartheta_{\mathrm{Map}(G,\square V)}(\mathrm{Map}(G,\vartheta_V))(\Psi))(x)(e)
\\
&
=(\mathrm{Map}(G,\vartheta_V)(\Psi))(x)(x)
\\
&=(\vartheta_V(\Psi(x)))(x)
\\
&= ((\Psi(x))(x))x
\end{align*}
whence the diagram \eqref{CD2} is commutative.
\end{proof}

\noindent
\begin{Remark}
{\rm The isomorphism $\vartheta_V$ is determined by the requirement
\[
(\vartheta_V\rhoo)(e) = \rhoo(e),\ \rhoo \in \mathrm{Map}(G,V) .
\]
Furthermore, on both $\mathrm{Map}(G,V)$ and $\mathrm{Map}(G,\square V)$
the right $G$-module structure relative to the copy of $G$ comes from
left translation in $G$.}
\end{Remark}

\begin{Corollary}\label{cor1}
The natural transformation $\vartheta$ in Proposition {\rm \ref{prop11}}
yields a natural isomorphism of monads from
the monad
$(\mathcal U,\omega, \nu)$ 
to the monad
$(\mathcal T, \uuu, \mu)$ in Example {\rm \ref{ex1}}.
\end{Corollary}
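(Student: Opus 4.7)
The plan is to observe that this corollary is essentially a bookkeeping consequence of Proposition \ref{prop11}, so the work to do is to unpack what "isomorphism of monads" means and match the two required coherence diagrams against the diagrams \eqref{CD11} and \eqref{CD2} already verified.

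First I would recall the definition: a morphism of monads from $(\mathcal U,\omega,\nu)$ to $(\mathcal T,\uuu,\mu)$ consists of a natural transformation $\vartheta\colon \mathcal U \to \mathcal T$ such that (i) the unit diagram $\uuu = \vartheta \circ \omega$ commutes as natural transformations $\mathcal I \to \mathcal T$, and (ii) the multiplication square
\[
\mu \circ (\vartheta \mathcal T) \circ (\mathcal U \vartheta) = \vartheta \circ \nu
\]
commutes as natural transformations $\mathcal U^2 \to \mathcal T$. It is then an isomorphism of monads precisely when, in addition, $\vartheta_V$ is an isomorphism for every object $V$.

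Next I would verify (i) and (ii) by direct appeal to Proposition \ref{prop11}. Condition (i) is exactly the commutativity of diagram \eqref{CD11} in that proposition, read on each object $V$: $\uuu_V = \vartheta_V \circ \omega_V$. Condition (ii) is exactly the outer commutativity of diagram \eqref{CD2}, where the left vertical composite $\vartheta_{\mathrm{Map}(G,\square V)}\circ \mathrm{Map}(G,\vartheta_V)$ is the component at $V$ of the natural transformation $(\vartheta\mathcal T)\circ(\mathcal U\vartheta)\colon \mathcal U^2 \to \mathcal T^2$ (the two orderings agree here by the interchange law, so the order in which one applies $\vartheta$ on the inner versus outer copy of $\mathcal U$ is immaterial up to the standard naturality identity). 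Thus the two required coherence squares are precisely the ones Proposition \ref{prop11} supplies.

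Finally, since Proposition \ref{prop11} also states that each $\vartheta_V$ is an isomorphism of right $G$-modules, the natural transformation $\vartheta$ is invertible, and its componentwise inverses automatically assemble into an inverse morphism of monads. Hence $\vartheta$ is an isomorphism of monads, as claimed. No step presents a real obstacle; the only mild subtlety is the naturality/interchange identification in step (ii), which is a routine consequence of naturality of $\vartheta$ applied to the maps used to build the two horizontal arrows in \eqref{CD2}.
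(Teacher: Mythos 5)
Your proposal is correct and follows exactly the route the paper intends: the paper offers no separate argument for the Corollary, treating it as an immediate bookkeeping consequence of Proposition \ref{prop11}, whose diagrams \eqref{CD11} and \eqref{CD2} are precisely the unit and multiplication coherence squares for a morphism of monads, with invertibility of each $\vartheta_V$ supplying the isomorphism. Your remark that the left vertical composite in \eqref{CD2} is the component of $(\vartheta\mathcal T)\circ(\mathcal U\vartheta)$ and that the interchange law identifies this with the other ordering is the only point requiring care, and you handle it correctly.
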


\begin{Remark} This isomorphism between the two monads 
is presumably folk-lore and relies on the fact, well understood in the classical literature,
cf. e.~g. p.~{\rm 212} of {\rm \cite{hilstatw}},
that the diagonal map of the group $G$ turns the group ring $RG$ of $G$ into a Hopf algebra.
\end{Remark}

Let $V$ be a right $RG$-module.
The dual standard construction 
\begin{equation}
\mathbf U(V)=\left(\mathrm{Map}(G^{\times (n+1)}, 
V),\varepsilon^j,\eta^j\right)_{n \in \mathbb N}
\label{U}
\end{equation}
associated with $V$ and the monad
$(\mathcal U,\omega,\nu)$  
is a cosimplicial object in the category of right $RG$-modules.
For intelligibility, we recall
the cosimplicial structure; on the degree $n$ constituent
$\mathrm{Map}(G^{\times (n+1)},V)$, this structure
 is given by the familiar formulas
\begin{align*}
(\varepsilon^j(\varphi))(x_0,\ldots,x_{j-1},x_j,x_{j+1},\ldots,x_n)
&=\varphi(x_0,\ldots,x_{j-1},x_{j+1},\ldots,x_n),\ 0 \leq j \leq n,
\\
(\eta^j(\varphi))(x_0,\ldots,x_{n-1})
&=\varphi(x_0,\ldots,x_{j-1},x_j,x_j,\ldots,x_{n-1}),\ 0 \leq j \leq n-1.
\end{align*}
Likewise, 
on the degree $n$ constituent
$\mathrm{Map}(G^{\times (n+1)}, 
V)$, the right $RG$-module structure
is given by the association
\begin{equation}
\begin{aligned}
\mathrm{Map}(G^{\times (n+1)}, 
V)
\times G 
&\longrightarrow
\mathrm{Map}(G^{\times (n+1)}, 
V),\ (\alpha,x) \mapsto \alpha \cdot x,
\\
(\alpha \cdot x)(x_0,x_1,\ldots,x_n) &=(\alpha (xx_0,xx_1,\ldots,xx_n))x,
\end{aligned}
\label{action2}
\end{equation}
where $\alpha \in \mathrm{Map}(G^{\times (n+1)}, 
\square V)$ and $x,x_0,\ldots, x_n \in G$.
Together with the injection $\omega_V$,
the resulting chain complex $|\mathbf U(V)|$
is an injective resolution of $V$ in the category
of  right
$RG$-modules.

\begin{Proposition} \label{prop2}
The natural isomorphism of monads mentioned in Corollary
{\rm \ref{cor1}} induces a natural isomorphism
\begin{equation*}
\Theta\colon \mathbf U(V)=\left(\mathrm{Map}(G^{\times (n+1)}, 
V),\varepsilon^j,\eta^j\right)_{n \in \mathbb N}
\longrightarrow
\left(\mathrm{Map}(G^{\times (n+1)}, 
\square V),\varepsilon^j,\eta^j\right)_{n \in \mathbb N}=
\mathbf T(V)
\end{equation*}
of cosimplicial objects in the category of right $RG$-modules
and hence a natural isomorphism between the resulting
injective resolutions of $V$ in the category of right $RG$-modules.
In degree $n \geq 0$, this isomorphism is given by the association
\begin{equation}
\begin{aligned}
\vartheta_n\colon \mathrm{Map}(G^{\times (n+1)}, V)
&\longrightarrow 
\mathrm{Map}(G^{\times (n+1)}, \square V)
\\
\vartheta_n(\alpha)(x_0,x_1,\ldots, x_n) &= 
\alpha (x_0,x_0x_1,\ldots, x_0x_1\ldots x_{n-1}, x_0x_1\ldots x_n)\cdot x_0\cdot \ldots \cdot x_n
\end{aligned}
\label{ass1}
\end{equation}
where $x_0,x_1,\ldots x_n \in G$ and 
$\alpha \in \mathrm{Map}(G^{\times (n+1)}, V)$.
\end{Proposition}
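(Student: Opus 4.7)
The existence of $\Theta$ as a natural isomorphism of cosimplicial objects in $\mathrm{Mod}_{RG}$ is a formal consequence of Corollary \ref{cor1}: by the general principle that the dual standard construction depends functorially on the monad, an isomorphism of monads induces a levelwise isomorphism of the associated cosimplicial objects that is automatically compatible with all coface operators $\varepsilon^j$ and codegeneracy operators $\eta^j$, since these operators are built solely from the unit and composition of the monad, whose intertwining by $\vartheta$ is exactly the content of Proposition \ref{prop11}. Passing to the associated chain complexes then delivers the asserted isomorphism of injective resolutions. Consequently the substance of the proposition is the explicit identification \eqref{ass1} in degree $n$.

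To establish \eqref{ass1}, I would proceed by induction on $n$. Under the canonical iterated currying $\mathcal U^{n+1}(V) \cong \mathrm{Map}(G^{\times (n+1)}, V)$ and likewise for $\mathcal T^{n+1}(V)$, the induced iterated isomorphism factors as
\[
\vartheta_n \;=\; (\mathcal T \vartheta_{n-1}) \circ \vartheta_{\mathcal U^n V} \colon \mathcal U^{n+1}(V) \longrightarrow \mathcal T \mathcal U^n(V) \longrightarrow \mathcal T^{n+1}(V).
\]
The base case $n=0$ is the defining formula for $\vartheta_V$. For the inductive step, view $\alpha \in \mathrm{Map}(G^{\times (n+1)}, V)$ as $\widetilde\alpha \colon G \to \mathcal U^n(V)$. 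The first arrow sends $\widetilde\alpha$ to $x_0 \mapsto \widetilde\alpha(x_0) \cdot x_0$, where $\cdot x_0$ denotes the diagonal right $G$-action \eqref{action2} on $\mathcal U^n(V)$; unwinding yields
\[
(x_0,x_1,\ldots,x_n) \longmapsto \alpha(x_0, x_0 x_1, \ldots, x_0 x_n) \cdot x_0 .
\]
The second arrow post-composes in the last $n$ slots with the inductive formula $\gamma \mapsto \bigl((y_1,\ldots,y_n) \mapsto \gamma(y_1, y_1 y_2, \ldots, y_1 \cdots y_n) \cdot y_1 \cdots y_n\bigr)$; substituting and then collapsing the nested right-multiplications via $(\cdot x_0)(\cdot (x_1 \cdots x_n)) = \cdot (x_0 x_1 \cdots x_n)$ recovers exactly \eqref{ass1}.

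The main obstacle is purely bookkeeping: distinguishing the diagonal right $G$-action on each $\mathcal U$-level from the left-translation right $G$-action on each $\mathcal T$-level, and keeping track of the interaction of repeated currying with $\square$ and with the right-multiplications contributed at every stage of the iteration. The shift from the bare variables $x_i$ to the cumulative products $x_0 x_1 \cdots x_i$ in the arguments of $\alpha$ arises precisely because the diagonal action propagates inward with each application of $\vartheta_{\mathcal U^k V}$, while the nested right-factors $(\cdot x_0)(\cdot x_1)\cdots(\cdot x_n)$ collapse into the single factor $\cdot (x_0 x_1 \cdots x_n)$ by associativity in $G$.
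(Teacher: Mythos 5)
Your proposal is correct, and it actually supplies the computation that the paper's own proof omits: the paper merely notes that existence is formal, checks $n=0$ against the commutative diagram \eqref{CD11}, declares the case $n\geq 1$ a ``straightforward verification left to the reader,'' and defers to a conceptual explanation in Section \ref{comonads}. Your induction via the iterated whiskering $\vartheta_n = (\mathcal T\vartheta_{n-1})\circ\vartheta_{\mathcal U^n V}$ is a legitimate way to carry out that verification, and I checked the key steps: the diagonal action on $\mathcal U^{n}(V)$ does unwind to \eqref{action2}, the first arrow produces $\alpha(x_0,x_0x_1,\ldots,x_0x_n)\cdot x_0$, and postcomposing with the inductive formula in the remaining $n$ slots converts the arguments into the cumulative products and collapses the right factors, recovering \eqref{ass1}. (The only implicit point is that the several possible bracketings of the iterate $\mathcal U^{n+1}\to\mathcal T^{n+1}$ agree, which holds by naturality of $\vartheta$, and that compatibility with $\varepsilon^j$, $\eta^j$ is automatic because a morphism of monads intertwines unit and composition.) The paper's intended explanation is different in flavor: in Section \ref{comonads} the isomorphism $\Theta$ is factored as $\Phi\circ\Psi$, where $\Psi$ comes from the isomorphism of simplicial groups $(EG)^{\mathrm{left}}\to EG$ given by \eqref{assoc2} (which accounts for the cumulative products $x_0x_1\cdots x_i$) and $\Phi$ is the isomorphism of Proposition \ref{prop3} (which accounts for the trailing factor $\cdot\, x_0\cdots x_n$). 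Your inductive argument is more self-contained and stays entirely inside the monad formalism; the paper's factorization buys a geometric interpretation in terms of the homogeneous versus nonhomogeneous universal objects. Both are valid.
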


\begin{proof} Only the statement making the isomorphism explicit
requires proof. For $n=0$, the given formula comes down to the commutativity
of diagram \ref{CD11} above. For $n \geq 1$,
the verification is straightforward and left to the reader. A 
conceptual explanation will be given in Section \ref{comonads} below.
\end{proof}
}
\end{Example}

\begin{Example}\label{ex4}
{\rm  This example is the variant of Example \ref{ex3}
for {\em Lie groups} rather than just discrete groups
and {\em smooth maps\/} rather than 
just set maps; it is related to Example \ref{ex2} in the same way as
Example \ref{ex3} is related to Example \ref{ex1}:
Let the ground ring be that of the reals, $\mathbb R$, and let $G$ be a 
Lie group. Further, let 
$\mathcal U\colon \mathrm{Mod}_{G}\longrightarrow \mathrm{Mod}_{G}$
be the functor which assigns to the right $G$-representation $V$ the
right $G$-representation $\mathcal U(V)=C^{\infty}(G,V)$,
endowed with the right {\em diagonal\/} action
\begin{equation}
C^{\infty}(G,V)\times G \longrightarrow C^{\infty}(G,V),\ 
(\rhoo,x)\longmapsto \rhoo \cdot x
\label{diag222}
\end{equation}
given by
\begin{equation}
(\rhoo\cdot x)(y) = (\rhoo(xy))x,\ x, y \in G, \rhoo \colon G \to V,
\label{223}
\end{equation}
let $\omega$ be the natural transformation given by
the assignment to the right $G$-representation $V$ of
\begin{equation}
\omega=\omega_V\colon V \longrightarrow C^{\infty}(G, V),
\ v \longmapsto \omega_v:G \to V,\ \omega_v(x) =v, \ v \in V, x
\in G,
\label{224}
\end{equation}
and let
\begin{equation}
\nu\colon \mathcal U^2 \longrightarrow \mathcal U
\label{nu}
\end{equation}
be 
the natural transformation
given by the assignment to
the right $G$-representation $V$ of the association
\[
\nu_V 
\colon C^{\infty}(G,C^{\infty}(G,V))
\longrightarrow C^{\infty}(G,V),\ 
(\nu_V (\Psi))(x)= \Psi(x)(x),\ x \in G,
\] 
where $\Psi$ ranges over smooth maps of the kind $G \to C^{\infty}(G,V)$.
The system $(\mathcal U,\omega, \nu)$
is a monad over the category $\mathrm{Mod}_{G}$,
indeed, an alternate description of the monad $(\mathcal T,\uuu,\mu)$
in Example \ref{ex2}.
More precisely, as in the situation of Example \ref{ex3},
the natural transformation $\vartheta$ which, to the right $G$-representation 
$V$,
assigns the isomorphism
\begin{equation}
\vartheta_V\colon C^{\infty}(G,V)\longrightarrow C^{\infty}(G,\square V)
\label{phi}
\end{equation}
of right $G$-modules
given by
\[
(\vartheta(\rhoo))(y)=(\rhoo(y))y
\]
yields an isomorphism of monads from
the monad
$(\mathcal U,\omega, \nu)$ 
to the monad
$(\mathcal T, \uuu, \mu)$ in Example {\rm \ref{ex2}}.

Given the differentiable right $G$-module $V$, 
the 
{\em dual standard construction\/}
\begin{equation}
\mathbf U(V)=\left(C^{\infty}(G^{\times (n+1)}, 
V),\varepsilon^j,\eta^j\right)_{n \in \mathbb N}
\label{UVsmooth}
\end{equation}
associated with $V$  and the monad
$(\mathcal U,\omega,\nu)$ is a cosimplicial object in the category
of differentiable right $G$-modules,
and the chain complex $|\mathbf U(V)|$ associated with 
this cosimplicial object,
together with the injection $\omega_V$,
is a differentiably injective resolution of $V$
in the category of differentiable right $G$-modules.
Apart from the fact that we deal with differentiable
right $G$-modules rather than differentiable left $G$-modules,
the resolution  $|\mathbf U(V)|$ is, in fact, precisely the 
{\em homogeneous resolution\/} of $V$, cf. p.~371 
of \cite{hochmost}.

The association \eqref{ass1}, now interpreted in the smooth category,
that is, with $C^{\infty}$ substituted for $\mathrm{Map}$,
yields the isomorphism
\[
|\Theta|\colon |\mathbf U(V)|\longrightarrow |\mathbf T(V)|
\]
of differentiably injective resolutions of $V$
in the category
of differentiable right $G$-modules,
where $\mathbf T(V)$ is the cosimplicial object \eqref{Tsmooth}.
}
\end{Example}

\begin{Example} \label{ex7}
{\rm 
Let $R$ be an arbitrary commutative ring with 1 and $\mathfrak g$ 
an $R$-Lie algebra, which we suppose to be projective as an $R$-module.
Let $C\fra g$ be the cone on $\fra g$ 
in the category of  differential 
graded Lie algebras; the cone $C\fra g$ is a contractible differential 
graded $R$-Lie algebra. 
Let $\Cat=\chain_{\mathfrak g}$, the category of right $\mathfrak g$-chain 
complexes,
let $\mathcal M=\mathrm{Mod}_{C\mathfrak g}$,
and let $\mathcal G^{\mathfrak g}_{C\mathfrak g} \colon\chain_{\mathfrak g}\to \mathrm{Mod}_{C\mathfrak g}$
be the functor given by
\begin{equation}
\mathcal G^{\mathfrak g}_{C\mathfrak g}(V)= \mathrm{Hom}_{\mathfrak g}(\mathrm U[C\mathfrak g],V)
\cong
\mathrm{Hom}^{\tau_{\mathfrak g}}(\Lambda'_{\partial}[s\mathfrak g],V) \cong
(\mathrm{Alt}(\mathfrak g, V),d),
\label{2.3.1}
\end{equation}
the total object arising from the bicomplex having 
$\mathrm{Alt}^*(\mathfrak g, V_*)$ as underlying bigraded $R$-module;
here $V$ ranges over  right $\mathfrak g$-chain complexes,
$(\mathrm{Alt}(\mathfrak g, V),d)$ is
endowed with the obvious right $(C\mathfrak g)$-module structure coming from
the obvious left $(\mathrm U[C\mathfrak g])$-module structure on itself or,
equivalently, that given by the operations of 
contraction and Lie derivative on the CCE complex
$(\mathrm{Alt}(\mathfrak g, V),d)$, cf. (1.3) above.
The functor $\mathcal G^{\mathfrak g}_{C\mathfrak g}$
is right adjoint to the forgetful functor $\square 
\colon \mathrm{Mod}_{C\mathfrak g}\to \chain_{\mathfrak g}$
and hence defines a monad  $(\mathcal T,\uuu,\mu)$  over the category 
$\mathrm{Mod}_{C\mathfrak g}$.
Given the right $(C\mathfrak g)$-module $\VV$, 
the chain complex $\left|\mathbf T(\VV)\right|$ arising from the
{\em dual standard construction\/}
$\mathbf T(\VV)$ associated with $\VV$
is a resolution of $\VV$
in the category of $(C\mathfrak g)$-modules that is injective relative to
the category $\chain_{\mathfrak g}$ of right $\mathfrak g$-chain complexes.
Given a right  $(C\mathfrak g)$-module $\mathbf W$, the {\em relative differential 
$\mathrm{Ext}_{(C \mathfrak g, \mathfrak g)}(\mathbf W,\VV)$\/} is the homology of
the chain complex
$$
\mathrm{Hom}_{C\mathfrak g}\left(\mathbf W,\left|\mathbf T(\VV)\right|\right).
$$
In particular, for $\mathbf W=R$, 
the relative differential graded 
$\mathrm{Ext}_{(C \mathfrak g, \mathfrak g)}(R,\VV)$ is the homology of
the chain complex
$
\left|\mathbf T(\VV)\right|^{C\mathfrak g}.
$
}
\end{Example}

\begin{Example}\label{ex9}
{\rm  This example extends Example \ref{ex4}.
Let the ground ring again be that of the reals, $\mathbb R$, and let $G$ be a 
Lie group. We will now switch to differentiable {\em left\/} $G$-modules.
Let $\xi\colon P \to B$ be a right principal $G$-bundle.
Let $\mathcal U_{\xi}\colon {}_G\mathrm{Mod}\longrightarrow {}_G\mathrm{Mod}$
be the functor which assigns to the left $G$-representation $V$ the
left $G$-representation $\mathcal U_{\xi}(V)=C^{\infty}(P,V)$,
endowed with the left {\em diagonal\/} action
\begin{equation}
G\times C^{\infty}(P,V) \longrightarrow C^{\infty}(P,V),\ 
(x,\rhoo)\longmapsto x\cdot \rhoo 
\label{diag2222}
\end{equation}
given by
\begin{equation}
(x\cdot \rhoo)(y) = x(\rhoo(yx)),\ x\in G, y \in P, \rhoo \colon P \to V,
\label{2223}
\end{equation}
let $\omega$ be the natural transformation given by
the assignment to the left $G$-representation $V$ of
\begin{equation}
\omega=\omega_V\colon V \longrightarrow C^{\infty}(P, V),
\ v \longmapsto \omega_v:P \to V,\ \omega_v(y) =v, \ v \in V, y
\in P,
\label{2224}
\end{equation}
and let
\begin{equation}
\nu\colon \mathcal U_{\xi}^2 \longrightarrow \mathcal U_{\xi}
\label{nnu}
\end{equation}
be 
the natural transformation
given by the assignment to
the left $G$-representation $V$ of the association
\[
\nu_V 
\colon C^{\infty}(P,C^{\infty}(P,V))
\longrightarrow C^{\infty}(P,V),\ 
(\nu_V (\Psi))(y)= \Psi(y)(y),\ y \in P,
\] 
where $\Psi$ ranges over smooth maps of the kind $P \to C^{\infty}(P,V)$.
The system $(\mathcal U_{\xi},\omega, \nu)$
is a monad over the category ${}_G\mathrm{Mod}$.

When $P$ is just $G$, so that $\xi$ is the trivial principal $G$-bundle
over a point, the monad $(\mathcal U_{\xi},\omega, \nu)$
plainly comes down to
the monad
$(\mathcal U,\omega, \nu)$
spelled out in Example \ref{ex3},
phrased in the category of differentiable left $G$-modules
rather than  differentiable right $G$-modules.

Given the differentiable left $G$-module $V$, 
the 
{\em dual standard construction\/}
\begin{equation}
\mathbf U_{\xi}(V)=\left(C^{\infty}(P^{\times (n+1)}, 
V),\varepsilon^j,\eta^j\right)_{n \in \mathbb N}
\label{UVsmoothxi}
\end{equation}
associated with $V$  and the monad
$(\mathcal U_{\xi},\omega,\nu)$ is a cosimplicial object in the category
of differentiable left $G$-modules,
and the chain complex $|\mathbf U_{\xi}(V)|$ associated with 
this cosimplicial object,
together with the injection $\omega_V$,
is a differentiably injective resolution of $V$
in the category of differentiable left $G$-modules.
In particular, the obvious {\em restriction\/} map
from $|\mathbf U_{\xi}(V)|$ to 
the chain complex $|\mathbf U(V)|$ associated with 
the cosimplicial object 
\eqref{UVsmooth} but phrased for differentiable
left $G$-modules rather than differentiable right ones,
is a comparison map between the two resolutions.
For the special case where $\xi$ is the trivial principal bundle,
the obvious map from
$|\mathbf U(V)|$
to $|\mathbf U_{\xi}(V)|$  
is a comparison map in the other direction.
In the general case, using (i) an open cover of $B$
such that $\xi$ is trivial on each member of the cover
and, furthermore, a smooth partition of unity subordinate
to this cover, we can still construct
a comparison map from $|\mathbf U(V)|$
to $|\mathbf U_{\xi}(V)|$.

For intelligibility we recall that, given a discrete group $\pi$ and 
a $\pi$-module $V$, for any free $\pi$-set $\Gamma$,
the injection
\[
V \longrightarrow \mathrm{Map}(\Gamma,V),\ v \longmapsto \varphi_v,
\varphi_v(y)= v, v\in V, y \in \Gamma,
\]
is one of $V$ into a relatively injective $\pi$-module,
$\mathrm{Map}(\Gamma,V)$ being suitable turned into a $\pi$-module.
For our purposes, $C^{\infty}(P,V)$ has formally the same significance
as a relatively injective $\pi$-module of the kind
$\mathrm{Map}(\Gamma,V)$.}
\end{Example}

\section{Differentiable cohomology over a Lie groupoid}
\label{liegroupoids}

We shall see that the monads spelled out in
Example \ref{ex3} and Example \ref{ex4}
(and written as $(\mathcal U,\omega,\nu)$)
generalize to groupoids and yield the appropriate injective resolutions
while the monads spelled out in
Example \ref{ex1} and Example \ref{ex2} do not extend 
in an obvious manner  to groupoids. See also Remark \ref{cd5} below.

Our reference for  terminology is \cite{canhawei}, 
\cite{mackone}--\cite{mackbtwo};
we will use the conventions in \cite{canhawei}.
Let $\GGG$ be a Lie groupoid; we denote the smooth manifold of {\em objects\/} 
by $\BBB$,
the {\em source\/} and {\em target\/} maps by $\sou\colon \GGG\to \BBB$ and
$\tar\colon \GGG\to \BBB$, respectively, and the {\em object inclusion\/}
map by $1\colon \BBB \to \GGG$.
Thus we view the element $u$ of $\GGG$ as an arrow from
$\sou(u)$ to $\tar(u)$.
According to the conventions in \cite{canhawei} and \cite{mackeigh},
the elements $u,v$ of $\GGG$ are {\em composable\/} when
$s(u)=t(v) \in \BBB$, and we then write the element of $\GGG$
arising from composition simply as $uv$.
Often we do not distinguish in notation between the groupoid and its 
space of (iso)morphisms.
Given $p,q \in \BBB$ we will denote the $\tar$-fiber
$\tar^{-1}(p)$ by $\GGG_p$
(beware: in \cite{mackeigh}  the $\tar$-fiber is written as $\GGG^p$), the
$\sou$-fiber
$\sou^{-1}(q)$ by $\GGG^q$ (beware: in \cite{mackeigh} 
 the $\sou$-fiber
is written as $\GGG_q$)
and we write
\[
\GGG_p^q = \GGG_p\cap\GGG^q;
\]
thus $\GGG_q^q$ is the {\em vertex group\/} at $q$.
Moreover, given the objects $p$ and $q$, the groupoid composition amounts to
a smooth map
\begin{equation}
\GGG^q \times \GGG_q \longrightarrow \GGG, \ 
(u,v) \mapsto uv.
\label{comp}
\end{equation}
We suppose throughout that $\GGG$
is {\em locally trivial\/} in the sense that
the smooth map 
\[
(\tar,\sou)\colon \GGG \longrightarrow \BBB \times \BBB
\] 
is
a submersion. Then $\GGG$ is locally trivial
in the usual sense; see p.~16 of \cite{mackbtwo} for details.
Furthermore, for any $p\in \BBB$, the projection $s\colon 
\GGG_p \to \BBB$ is, then, a principal left 
$\GGG_p^p$-bundle and, for any $q\in \BBB$, 
the projection
$t\colon 
\GGG^q \to \BBB$ is a principal right 
$\GGG_q^q$-bundle. 
For any object $q$, the smooth map \eqref{comp} induces a diffeomorphism
from $\GGG^q \times_{\GGG^q_q} \GGG_q$ onto $\GGG$,
and the target map $\tar\colon \GGG \to \BBB$ thus arises as the fiber bundle
\begin{equation}
\tar\colon \GGG^q \times_{\GGG^q_q} \GGG_q \longrightarrow \BBB
\end{equation}
associated with the principal right $\GGG^q_q$-bundle 
$\tar\colon \GGG^q \to \BBB$ and the left $\GGG^q_q$-action
on $\GGG_q$;
likewise the source map $\sou\colon \GGG \to \BBB$ arises  as the fiber bundle
\begin{equation}
\sou\colon \GGG^q \times_{\GGG^q_q} \GGG_q \longrightarrow \BBB
\end{equation}
associated with the principal left $\GGG^q_q$-bundle 
$\sou\colon \GGG_q \to \BBB$ and the right $\GGG^q_q$-action
on $\GGG^q$.

It is well known, cf. \cite{mackbtwo} (Theorem 1.6.5),
that any locally trivial Lie groupoid is the gauge groupoid of
an associated principal bundle:
Let $G$ be a Lie group and
$\xi \colon P \to B$ a principal right $G$-bundle.
The {\em gauge groupoid\/}
$((P\times P)\big/G,B,\sou,\tar,1)$ of $\xi$
arises from the {\em product groupoid\/} or {\em pair groupoid\/}
$(P\times P,P,\sou,\tar,1)$
having composition given by the association
\[
(u,v)(v,w) = (u,w),\ u,v,w \in P
\]
in the obvious way as indicated.
Given the locally trivial Lie groupoid $\GGG$, pick $q \in \BBB$,
let $G=\GGG_q^q$,
and let
\[
\xi =\tar \colon \GGG^q \longrightarrow \BBB;
\]
as noted above, $\xi$ is a principal right $G$-bundle;
the groupoid composition \eqref{comp} can then be written as
\begin{equation}
\GGG^q \times \GGG^q \longrightarrow \GGG, \ 
(u,v) \mapsto uv^{-1},
\label{comp2}
\end{equation}
and this association induces an isomorphism between the gauge groupoid
of $\xi$ and $\GGG$.

Given two spaces $f_1\colon X_1 \to Y$ and  $f_2\colon X_2 \to Y$ 
over the space $Y$,
we will use the notation 
\[
X_1\times_{f_1,Y,f_2}X_2 
=
\{(x_1,x_2); f_1(x_1) = f_2(x_2)\in Y \}\subseteq X_1 \times X_2.
\]
Thus $X_1\times_{f_1,Y,f_2}X_2$ is the fiber product
of $X_1$ and $X_2$ over $Y$.

Let 
$M$ be a smooth manifold and let $f \colon M \to \BBB$ be a 
{\em smooth manifold over\/} $\BBB$, that is, $f$ is 
a smooth map.
A smooth {\em left action\/} of $\GGG$ on $f$, 
cf. \cite{canhawei}\ (p.~101), is given by a commutative diagram
\begin{equation}
\begin{CD}
\GGG \times_{\sou,\BBB,f} M @>>> M
\\
@V{\tar}VV
@VfVV
\\
\BBB @>{\mathrm{Id}}>> \BBB
\end{CD}
\label{action11}
\end{equation}
such that the obvious associativity constraint is satisfied.
We will then say that $f$ is a smooth left $\GGG$-manifold.
The notions of smooth {\em right action\/}
\begin{equation}
\begin{CD}
M \times_{f,\BBB,\tar} \GGG  @>>> M
\\
@V{\sou}VV
@VfVV
\\
\BBB @>{\mathrm{Id}}>> \BBB
\end{CD}
\label{action111}
\end{equation}
of $\GGG$ on $f$
and of smooth right $\GGG$-manifold are defined accordingly.
Morphisms of left $\GGG$-manifolds and morphisms of right 
$\GGG$-manifolds are defined in the obvious way, 
and left $\GGG$-manifolds as well as right $\GGG$-manifolds
constitute a category.

An ordinary group acts on itself by left translation and by right translation.
Accordingly,
left translation in $\GGG$ induces a left action 
of $\GGG$ on 
$f=\tar\colon \GGG \to \BBB$, viewed here as a fiber bundle
having, over $q \in \BBB$, fiber $\GGG_q$.
Indeed, in view the previous discussion, 
after a choice of $q \in \BBB$ has been made,
$\tar\colon \GGG \to \BBB$
acquires the structure
of a fiber bundle associated with
the right principal $\GGG^q_q$-bundle 
$\tar\colon \GGG^q \to \BBB$
and the left translation action of
$\GGG^q_q$ on $\GGG_q$.
In the same vein,
right translation in $\GGG$ induces a right action 
of $\GGG$ on 
$f=\sou\colon \GGG \to \BBB$ viewed as a fiber bundle
having, over $p \in \BBB$, fiber $\GGG^p$;
after a choice of $p \in \BBB$ has been made,
we can view $\sou$ as a fiber bundle associated with
the left principal $\GGG^p_p$-bundle 
$\sou\colon \GGG_p \to \BBB$ via the right translation action of
$\GGG^p_p$ on $\GGG_p$.

In particular, 
let  $\zeta\colon E \to \BBB$ be a vector bundle.
A {\em representation\/}
of $\GGG$ on $\zeta$ 
{\em from the left\/}
or, synonymously,
a {\em left linear action\/} of $\GGG$ on $\zeta$,
is defined in the obvious manner as a left 
action
of $\GGG$ on $\zeta$ that is linear in the 
obvious sense, and we then refer to $\zeta$ as a differentiable {\em left\/}
$\GGG$-module.
Morphisms of left $\GGG$-modules 
are defined in the obvious way, 
and left $\GGG$-modules 
constitute a category which we will denote by
${}_{\GGG}\mathrm{Mod}$.
We do not consider right $\GGG$-modules.

Let $\zeta\colon E \to \BBB$ be a vector bundle endowed with
a left $\GGG$-module structure.
The construction in (2.3) of \cite{mackeigh} yields a differentiably 
injective left $\GGG$-module 
\[
F(\GGGG,\zeta)\colon F(\GGGG,E)\longrightarrow \BBB, 
\]
that is, $F(\GGGG,\zeta)$ is the projection map of
a vector bundle over $\BBB$ together with (i) a left $\GGG$-module
structure 
\begin{equation}
\begin{CD}
\GGG \times_{\sou, \BBB, F(\GGGG,\zeta)} F(\GGGG,E)   @>>>  F(\GGGG,E)
\\
@V{\tar}VV
@VVF(\GGGG,\zeta)V
\\
\BBB @>{\mathrm{Id}}>> \BBB
\end{CD}
\label{action112}
\end{equation}
of $\GGG$ on $F(\GGGG,\zeta)$
and  (ii) a canonical injection 
$\omega_{\zeta} \colon E \to F(\GGGG,\zeta)$
such that the diagram
\begin{equation*}
\begin{CD}
E @>{\omega_{\zeta}}>> F(\GGGG,E)
\\
@V{\zeta}VV
@VV{F(\GGGG,\zeta)}V
\\
\BBB
@>{\mathrm {Id}}>>
\BBB
\end{CD}
\end{equation*}
is a morphism of left $\GGG$-modules; furthermore, $F(\GGGG,\,\cdot \,)$ 
and $\omega$
are natural in the $\GGG$-module variable,
that is,
$F(\GGGG,\,\cdot \,)$ is an endofunctor of ${}_{\GGG}\mathrm{Mod}$
and $\omega$ is a natural transformation 
$\mathcal I \to  F(\GGGG,\,\cdot \,)$.
More precisely, the construction has the following structural properties:

\begin{enumerate}

\item For $q\in \BBB$, the fiber $(F(\GGGG,\zeta))^{-1}(q) 
\subseteq F(\GGGG,E)$
is the space $C^{\infty}(\GGG^q,E_q)$;

\item the embedding $\omega_{\zeta}$ is the {\em constant\/} one in the sense
that, for $q\in \BBB$, the restriction
\[
\omega_{\zeta}|_{E_q}\colon E_q \longrightarrow C^{\infty}(\GGG^q,E_q)
\]
sends $v \in E_q$ to the constant map which assigns $v \in E_q$ to 
$x\in \GGG^q$;

\item the left $\GGG$-action on $F(\GGGG,\zeta)$ is the 
action arising from right translation in $\GGG$ and from the left
$\GGG$-action on $\zeta$.

\end{enumerate}
The left $\GGG$-action on $F(\GGGG,\zeta)$ with respect to 
right translation in $\GGG$ and with respect to the left
$\GGG$-action on $\zeta$ is
given  by that action which, 
in the group case, corresponds to the diagonal action given
by a formula of the kind \eqref{diag2222}.
Appropriately adjusted to the present situation, 
this formula leads to the following description of the action:
\begin{equation}
(x\cdot \rhoo)(u) = x(\rhoo(ux)),\ x\in \GGG_q, u \in \GGG^q, 
\rhoo \colon \GGG^{\sou(x)} \to E_{\sou(x)},\ q \in \BBB .
\label{2230}
\end{equation}
Indeed, the situation can be depicted by means of 
the commutative diagram
\begin{equation}
\begin{CD}
\GGG^{\sou(x)}
@>{\rho}>>
E_{\sou(x)}
\\
@A{r_x}AA
@VV{\ell_x}V
\\
\GGG^q
@>{x\cdot \rho}>>
E_q 
\end{CD}
\label{CD5}
\end{equation}
where $\ell_x$ refers to left translation in $E$ with $x$
and $r_x$ to right translation in $\GGG$ with $x$.
The formula \eqref{2230}
is exactly the same as (3.5) in \cite{mackeigh}.

\begin{Remark} \label{cd5} The diagram {\rm \eqref{CD5}} 
explains in particular why the 
$\GGG$-action on $F(\GGGG,\zeta)$
cannot be defined by right translation 
in $\GGG$ alone, that is to say, why the action in Example {\rm \ref{ex1}}
and in Example {\rm \ref{ex3}} does {\em not\/} extend to groupoids
since the putative extension of this kind of action
would not be compatible with the variance constraint
imposed by the groupoid axioms. See also the Remark on p.~{\rm 285} of 
{\rm \cite{mackeigh}}.
\end{Remark}

\begin{Remark} \label{topovslie} The construction
of $F(\GGGG,\zeta)$ has been carried out in {\rm \cite{mackeigh}}
for locally trivial locally compact topological groupoids rather than Lie groupoids.
The construction given in
{\rm \cite{mackeigh}} carries over to Lie groupoids as well.
We leave the details to the reader.
\end{Remark}

The previous discussion entails the following observation which
unravels the structure of $F(\GGG,\zeta)$:

\begin{Proposition}\label{unravel1}
Given the left $\GGG$-module $\zeta\colon E \to \BBB$,
for any $q \in \BBB$, the left $\GGG$-module structure on $F(\GGG,\zeta)$
induces a diffeomorphism
\begin{equation}
\GGG^q \times_{\GGG^q_q}C^{\infty}(\GGG^q,E_q) \longrightarrow
F(\GGG,E)
\label{unravel2}
\end{equation}
over $\BBB$.
Thus, suppose that $G$ is a Lie group, that $\xi\colon P \to \BBB$
is a principal right $G$-bundle having $\GGG$ as its gauge groupoid,
and that, as a vector bundle, 
$\zeta$ is the vector bundle associated with $\xi$ and
the left $G$-representation $V$. Then, as a vector bundle,
$F(\GGG,\zeta)$ may be taken to be the vector bundle
\begin{equation}
\xi\times_G C^{\infty}(P,V)\colon P\times_G C^{\infty}(P,V) 
\longrightarrow \BBB
\label{unravel3}
\end{equation}
associated with $\xi$ and the diagonal left $G$-module structure
{\rm \eqref{diag2222}}
on $C^{\infty}(P,V)$ coming from the right $G$-action on $P$ and the
left $G$-action on $V$,
and the left $\GGG$-module structure on $F(\GGG,\zeta)$ is the familiar 
left $\GGG$-module structure on the associated vector bundle
{\rm \eqref{unravel3}}. \qed
\end{Proposition}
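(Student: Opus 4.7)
The plan is to define the map using the left $\GGG$-action on $F(\GGG,\zeta)$ described by \eqref{2230}, together with the structural property that the fiber of $F(\GGG,\zeta)$ over $p \in \BBB$ is $C^\infty(\GGG^p, E_p)$. Given $(u,\rho) \in \GGG^q \times C^\infty(\GGG^q, E_q)$, set $p = \tar(u)$; then $u \in \GGG_p$ with $\sou(u) = q$, so applying \eqref{2230} with $x=u$ produces $u\cdot\rho \in C^\infty(\GGG^p, E_p)$, an element of the fiber of $F(\GGG, E)$ over $p$. For $g \in \GGG^q_q$ the associativity of the $\GGG$-action gives $(ug)\cdot(g^{-1}\cdot\rho) = u\cdot(g\cdot(g^{-1}\cdot\rho)) = u\cdot\rho$, which is precisely the fiber-product relation $[ug,g^{-1}\cdot\rho] = [u,\rho]$. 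Thus the assignment descends to a smooth map $\GGG^q \times_{\GGG^q_q} C^\infty(\GGG^q, E_q) \to F(\GGG, E)$ covering the identity on $\BBB$.

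To show that the resulting map is a diffeomorphism it suffices to verify bijectivity on each fiber over $p \in \BBB$; smoothness of a set-theoretic inverse then follows from the local triviality of the principal bundle $\tar\colon \GGG^q \to \BBB$ together with the smoothness of the $\GGG$-action on $F(\GGG,\zeta)$. Pick $u_0 \in \GGG^q_p$; the fiber over $p$ on the left is parameterized as $\rho \mapsto [u_0,\rho]$, and our map sends it to $u_0\cdot\rho \in C^\infty(\GGG^p, E_p)$. Since $u_0$ is an arrow from $q$ to $p$, it acts as an isomorphism between the fibers $F(\GGG,\zeta)|_q$ and $F(\GGG,\zeta)|_p$, with inverse given by the action of $u_0^{-1}$, so the map is a bijection on fibers.

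For the second assertion, invoke the identification established before the proposition: with $G = \GGG^q_q$ and $\xi = \tar\colon \GGG^q \to \BBB$, the groupoid $\GGG$ is the gauge groupoid of $\xi$, one has $P = \GGG^q$, and a choice of basepoint in $P_q$ identifies $E_q$ with $V$. Substituting $x \in G \subset \GGG_q$ into \eqref{2230} reproduces, under these identifications, the formula \eqref{2223} for the diagonal $G$-action \eqref{diag2222} on $C^\infty(P, V)$; hence the associated bundle $\GGG^q \times_{\GGG^q_q} C^\infty(\GGG^q, E_q)$ coincides with the bundle \eqref{unravel3}, and the $\GGG$-module structure transported across the diffeomorphism of the first part is the standard gauge-groupoid action on \eqref{unravel3}. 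The main technical point is the verification that \eqref{2230} restricted to the vertex group $\GGG^q_q$ agrees, after the identifications, with the formula \eqref{2223}; once this is checked, the remaining claims are a matter of transporting structure across the diffeomorphism.
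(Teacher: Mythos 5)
Your proposal is correct and follows exactly the route the paper intends: the paper states this proposition with no written proof ("the previous discussion entails the following observation"), and your argument simply makes explicit how the fiber description $(F(\GGG,\zeta))^{-1}(q)=C^{\infty}(\GGG^q,E_q)$ together with the action formula \eqref{2230} yields the map $[u,\rho]\mapsto u\cdot\rho$, its descent through the $\GGG^q_q$-relation, its fiberwise bijectivity via the invertibility of arrows, and the agreement of \eqref{2230} on the vertex group with the diagonal action \eqref{2223}. Nothing in your write-up deviates from or adds a gap to what the paper's surrounding discussion already establishes.
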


The \lq \lq familiar 
left $\GGG$-structure on an associated fiber bundle\rq\rq\ 
is explained, e.~g., in Theorem 1.6.5 (p.~35) of \cite{mackbtwo}.

We will now write the endofunctor $F(\GGGG,\,\cdot \,)$ 
on $\mathrm{Mod}_{\GGG}$ as 
$\mathcal U \colon \mathrm{Mod}_{\GGG} \longrightarrow 
\mathrm{Mod}_{\GGG}$.
Let
\[
\nu\colon \mathcal U^2 \longrightarrow \mathcal U
\]
be the obvious extension of the natural transformation \eqref{nu}.
Thus $\nu$ is the
natural transformation
given by the assignment to
the left $\GGG$-module $\zeta$ of the morphism
\begin{equation*}
\begin{CD}
\nu_{\zeta}
\colon 
F(\GGGG, F(\GGGG,E))
@>>> F(\GGGG,E)
\\
@VVV
@VVV
\\
\BBB
@>{\mathrm{Id}}>>
\BBB
\end{CD}
\end{equation*}
of vector bundles
which, over $q\in \BBB$, sends
$\Psi\colon \GGG^q \to C^{\infty}(\GGG^q,E_q)$
to $\nu_q(\Psi)$ given by
$\nu_q(\Psi)(x) = \Psi(x)(x)$, where $x \in \GGG^q$.

\begin{Remark}\label{groupvsgroupoid2}
The definition of the natural transformation $\nu$ fails to work
with a putative construction involving all the (smooth) maps from 
$\GGGG$ to $E$ over $\BBB$,
$\GGGG$ being viewed over $\BBB$ through $\tar\colon \GGG \to \BBB$,
 rather than $F(\GGGG,E)$ 
since, as pointed out above,
the diagonal map
for groups does {\em not\/} extend to a diagonal map for general groupoids.
Thus, to define cohomology via an appropriate cosimplicial object,
we cannot simply take functions on an associated simplicial object 
of the kind $(E\GGG)^{\mathrm{left}}$
explored in the next section
unless
the groupoid under discussion is a group, cf. Remark 
{\rm \ref{groupvsgroupoid}}. 
\end{Remark}

The system $(\mathcal U,\omega, \nu)$
is a monad over the category ${}_{\GGG}\mathrm{Mod}$.
Given the left $\GGG$-module $\zeta$,
the dual standard construction  $\mathbf U(\zeta)$
associated with  $(\mathcal U,\omega, \nu)$
and the left $\GGG$-module $\zeta$
is a cosimplicial object
in the category of left $\GGG$-modules;
the chain complex $|\mathbf U(\zeta)|$ associated with 
$\mathbf U(\zeta)$,
together with the injection $\omega_{\zeta}$,
is then a differentiably injective resolution of $\zeta$
in the category of left $\GGG$-modules.
The resulting non-normalized chain complex coincides with the {\em standard
resolution\/} (5.4) in \cite{mackeigh},
developed there for locally trivial locally compact topological groupoids.
In view of our categorical point of view
that a monad defines a relative Ext,
that observation establishes the following.

\begin{Theorem}\label{theo1}
Let $\mathcal C$ be the category of continuous vector bundles on $\BBB$
with split morphisms.
Given  
two left $\GGG$-modules $\eta$ and $\zeta$, the chain complex
\[
\mathrm{Hom}_{\GGG}(\eta, |\mathbf U(\zeta)|)
\]
defines 
the relative $\mathrm{Ext}_{(\GGG,\mathcal C)}(\eta,\zeta)$;
in particular, when we take $\eta$ to be
the trivial real line bundle
on $\BBB$ with trivial left $\GGG$-module structure,
this {\rm Ext} comes down to the cohomology of $\GGG$ with values in $\zeta$
introduced in  {\rm \cite{mackeigh}}.
\end{Theorem}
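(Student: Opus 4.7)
The plan is to deduce both assertions from the material immediately preceding the theorem, which has established that $|\mathbf U(\zeta)|$, together with the augmentation $\omega_{\zeta}$, is a resolution of $\zeta$ by $\mathcal C$-relatively injective left $\GGG$-modules and that its non-normalized form coincides with the standard complex (5.4) of \cite{mackeigh}.

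The first statement then follows from the formal principle, recalled in Section \ref{monads}, that the dual standard construction of a monad on $\mathcal M$ arising from an adjunction with $\Cat$ yields a $\Cat$-relative injective resolution, the cohomology of whose $\mathrm{Hom}_{\mathcal M}$ against a second object computes the relative $\mathrm{Ext}_{(\mathcal M,\Cat)}$. Concretely, in the case at hand each term $\mathcal U^{n+1}(\zeta)=F(\GGG,\mathcal U^{n}(\zeta))$ is differentiably injective in the sense of \cite{mackeigh} (iterated fiberwise via Proposition \ref{unravel1}), and the contracting cosimplicial homotopy in $\mathcal C$ built from $\omega$ shows that the augmented complex is exact in $\mathcal C$. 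Applying $\mathrm{Hom}_{\GGG}(\eta,\cdot)$ to this resolution then computes $\mathrm{Ext}_{(\GGG,\mathcal C)}(\eta,\zeta)$ by the very definition of relative Ext.

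For the second statement, take $\eta$ to be the trivial real line bundle with trivial $\GGG$-action. A morphism $\eta\to \mathcal U^{n+1}(\zeta)$ of left $\GGG$-modules is the same as a global $\GGG$-invariant section of the underlying vector bundle of $\mathcal U^{n+1}(\zeta)$; hence $\mathrm{Hom}_{\GGG}(\eta,|\mathbf U(\zeta)|)$ is exactly the complex of $\GGG$-invariants of $|\mathbf U(\zeta)|$. Combining this with the identification of the non-normalized complex of $\mathbf U(\zeta)$ with Mackenzie's (5.4) recovers verbatim the cohomology complex of $\GGG$ with values in $\zeta$ of \cite{mackeigh}.

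The only non-formal point, and the main obstacle to watch out for, is the iterated injectivity of $\mathcal U^{n+1}(\zeta)$ together with the concrete form of the contracting homotopy in $\mathcal C$. In the group case these are handled cleanly by Corollary \ref{cor1} and the isomorphism $\vartheta$, which transports $(\mathcal U,\omega,\nu)$ to the adjunction-derived monad; in the groupoid case, where no such adjunction-derived monad exists (Remark \ref{cd5}) because the variance constraint \eqref{2230} forces the diagonal-evaluation formula $\Psi(x)(x)$ rather than an evaluation at an identity arrow, one appeals to Mackenzie's detailed verification for the topological setting in \cite{mackeigh}, noted above to carry over to Lie groupoids (Remark \ref{topovslie}).
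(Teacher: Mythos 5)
Your proposal is correct and follows essentially the same route as the paper: the paper's own argument consists precisely of observing that $|\mathbf U(\zeta)|$ is a differentiably injective resolution whose non-normalized complex coincides with Mackenzie's standard resolution (5.4), so that $\mathrm{Hom}_{\GGG}(\eta,\,\cdot\,)$ computes the relative Ext and, for $\eta$ the trivial line bundle, reduces to the invariants complex defining Mackenzie's cohomology. Your added care about the iterated injectivity and the contracting homotopy in $\mathcal C$ (delegated to \cite{mackeigh} via Remark \ref{topovslie}) fills in exactly the point the paper leaves implicit.
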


In the special case where $\GGG$ is an ordinary Lie group say $G$
and when $\eta$ is the trivial $G$-representation $\mathbb R$, the functor
$\mathrm{Ext}_{(\GGG,\mathcal C)}(\mathbb R,\, \cdot \,)$
comes down to the cohomology theory developed in \cite{hochmost} and reproduced
in Section \ref{monads} above.

We will now
unravel 
the functor $|\mathbf U(\zeta)|$
in terms of a corresponding principal bundle:
Let $\xi\colon P \to \BBB$ be a  principal right $G$-bundle,
such that the associated
gauge groupoid is isomorphic to $\GGG$.
We can pick any  $q\in \BBB$ and take $G$ to be the vertex group 
$\GGG_q^q$; then 
the projection
$t\colon \GGG^q \to \BBB$ is such a principal right 
$G$-bundle.

Let $\zeta\colon E \to \BBB$ be a left $\GGG$-module.
Relative to $\xi$, as a topological vector bundle,
$\zeta$ amounts to an induced vector bundle of the kind
\[
P\times_G V \longrightarrow \BBB,
\] 
where $V$ is a topological vector space endowed
with a 
left $G$-representation,
so that the standard formalism of associated fiber bundles applies.
Then, relative to $\xi$, as a topological vector bundle,
$F(\GGGG,\zeta)$
amounts to an induced vector bundle of the kind
\[
F(\GGGG,\zeta)\colon P\times_G C^{\infty}(P,V) \longrightarrow \BBB,
\] 
the vector space $C^{\infty}(P,V)$ being suitably topologized
and made into a left $G$-representation via the left 
action \eqref{diag2222},
the requisite left $\GGG$-module structure
on the induced vector bundle $F(\GGGG,\zeta)$ being that
induced from the left $G$-structure \eqref{diag2222}.

Let  $|\mathbf U_{\xi}(V)|$
be the injective resolution of the differentiable left $G$-module
arising from the cosimplicial object
\eqref{UVsmoothxi}. Now,
relative to $\xi$, as a topological differential graded vector bundle,
$|\mathbf U(\zeta)|$
amounts to an induced differential graded vector bundle of the kind
\[
P\times_G |\mathbf U_{\xi}(V)| \longrightarrow \BBB,
\] 
the differential graded left $G$-module
$|\mathbf U_{\xi}(V)|$ being suitably topologized.

The following result, established originally as
Theorem 3 of \cite{mackeigh}
even for  the rigid cohomology of 
a general locally trivial topological groupoid,
is now immediate; we spell it out for later reference,
since we will establish a formally similar result, but
for equivariant cohomology rather than just differentiable
cohomology:

\begin{Proposition}\label{prop9}
Restriction induces an isomorphism
of the cohomology of $\GGG$ with values in $\zeta$
onto the Hochschild-Mostow differentiable cohomology $\mathrm H(G,V)$ 
of $G$ with values in $V$.
\end{Proposition}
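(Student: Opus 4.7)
The plan is to exploit the structural identification, spelled out immediately before the statement, of the differentiably injective resolution $|\mathbf{U}(\zeta)|$ of $\zeta$ as the associated differential graded vector bundle $P \times_G |\mathbf{U}_{\xi}(V)| \to \BBB$, and to combine it with an elementary Shapiro-type lemma that identifies $\GGG$-invariant sections of an induced bundle with $G$-invariants of its typical fiber. Restriction to the fiber over $q$, followed by a choice of basepoint $\tilde q \in \xi^{-1}(q)$, will realize the resulting isomorphism concretely.

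The auxiliary observation I would prove first is this: for any differentiable left $G$-representation $W$, the assignment $\sigma \mapsto \tilde\sigma(\tilde q)$, where $\tilde\sigma\colon P \to W$ is the $G$-equivariant function corresponding to the section $\sigma$, induces a natural isomorphism
\begin{equation*}
\mathrm{Hom}_{\GGG}(\mathbb R_{\BBB}, P \times_G W) \stackrel{\cong}{\longrightarrow} W^G.
\end{equation*}
Indeed, using the description of $\GGG$ as the gauge groupoid $(P\times P)/G$ and unwinding what it means for $\sigma$ to be $\GGG$-invariant, one sees that $\tilde\sigma$ must take the same value on any two points of $P$ lying over the same base point; combined with the $G$-equivariance of $\tilde\sigma$, this forces $\tilde\sigma$ to be constant with value in $W^G$. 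This is precisely the content of the Seda adjunction alluded to in the introduction.

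Applying this observation dimensionwise to the differential graded vector bundle $|\mathbf{U}(\zeta)| \cong P \times_G |\mathbf{U}_{\xi}(V)|$ yields an isomorphism of cochain complexes
\begin{equation*}
\mathrm{Hom}_{\GGG}(\mathbb R_{\BBB}, |\mathbf{U}(\zeta)|) \stackrel{\cong}{\longrightarrow} \bigl(|\mathbf{U}_{\xi}(V)|\bigr)^{G}.
\end{equation*}
By Theorem \ref{theo1} the homology of the left-hand side is the cohomology of $\GGG$ with values in $\zeta$ in the sense of \cite{mackeigh}, while by Example \ref{ex9} the complex $|\mathbf{U}_{\xi}(V)|$ is a differentiably injective resolution of $V$ in the category of differentiable left $G$-modules, so that the homology of the right-hand side is the Hochschild-Mostow differentiable cohomology $\mathrm H(G,V)$. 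By construction, the displayed isomorphism is induced by restriction to the fiber over $q$ composed with the trivialization picked out by $\tilde q$, which is the restriction map of the statement.

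The main technical point, rather than anything categorical, is to verify that the formation of $G$-invariants commutes with the passage to the graded total complex and with the completed function-space constructions used in the definition of $F(\GGG, \cdot)$; this is routine within the framework of complete locally convex Hausdorff spaces adopted throughout the paper, and is essentially already contained in the arguments leading to Theorem~3 of \cite{mackeigh}.
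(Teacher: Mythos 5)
Your proposal is correct and follows essentially the same route as the paper: it combines the identification of $|\mathbf U(\zeta)|$ with the induced differential graded bundle $P\times_G|\mathbf U_{\xi}(V)|$ with the Seda adjunction (of which your basepoint-evaluation lemma is exactly the special case the paper needs) to reduce $\mathrm{Hom}_{\GGG}(\eta,|\mathbf U(\zeta)|)$ to $|\mathbf U_{\xi}(V)|^G$. The only difference is cosmetic: the paper invokes the full adjunction between induction and restriction from \cite{sedaone}, whereas you verify the relevant instance directly.
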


\begin{proof} Let $V_1$ and $V_2$ be two left
$G$-representations and
let $\vartheta_1\colon P \times_G V_1 \to \BBB$ and
$\vartheta_2\colon P \times_G V_2 \to \BBB$ be the corresponding left 
$\GGG$-modules.
In view of an observation in \cite{sedaone}, 
$\mathrm{Hom}_{\GGG}(\zeta_1,\zeta_2)$ is naturally isomorphic to
$\mathrm{Hom}_{G}(V_1,V_2)$.
More precisely, the  assignment to a left
$G$-representation $V$ of the $\GGG$-module 
$\vartheta\colon P \times_G V \to \BBB$ 
extends to a functor which is left-adjoint
to the restriction functor
${}_{\GGG}\mathrm{Mod} \to  {}_{G}\mathrm{Mod}$.
Hence, when $\eta$ refers to the trivial line bundle on $\BBB$
with trivial $\GGG$-module structure,
the chain complex $\mathrm{Hom}_{\GGG}(\eta, |\mathbf U_{\xi}(\zeta)|)$
comes down to the chain complex
$\mathrm{Hom}_{G}(\mathbb R,|\mathbf U_{\xi}(V)|)\cong |\mathbf U_{\xi}(V)|^G$ calculating $\mathrm H(G,V)$.
\end{proof}

\section{Comonads and standard constructions} \label{comonads}

Recall that any object $Y$ of a 
symmetric monoidal 
category endowed with 
a cocommutative diagonal---we will take 
the categories of spaces, of
smooth manifolds, of groups, of vector spaces, 
of Lie algebras,
etc.,---defines two simplicial 
objects in the category, the {\em trivial\/}
object which, with an abuse of notation,
we still write as $Y$, and the {\em total object\/} $EY$
(\lq\lq total object\rq\rq\ 
not being standard terminology in this generality); the
trivial object $Y$ has a copy of $Y$ in each 
degree and 
all simplicial
operations are the identity while, for $p \geq 0$, 
the degree $p$  constituent
$EY_p$ of the total object $EY$ is a product of
$p+1$ copies of $Y$ with the familiar 
face operations given by omission and 
degeneracy operations given by insertion.
See e.~g. \cite{bottone} and \cite{koszul}
(1.1).
When $Y$ is an ordinary 
$R$-module,
the simplicial $R$-module
associated with
$Y$ is in fact the result of application of the 
{\em Dold-Kan\/} functor $DK$ 
from chain complexes
to simplicial $R$-modules, 
cf. e.~g. \cite{doldpupp} (3.2 on p.~219).

For illustration, let 
$\fra g$ be a Lie algebra which we 
suppose to be projective over the ground ring $R$.
The total object $E\fra g$ associated with $\fra g$ in the category of 
Lie algebras relative to the obvious monoidal structure
is a simplicial Lie algebra. This construction plays a major role in
the predecessor \cite{koszultw} of the present paper and is 
lurking behind some of the constructions in the next section.

Likewise let $G$ be a group.
When the group $G$ is substituted for $Y$, the resulting simplicial
object is a simplicial group $EG$, and the diagonal injection
$G \to EG$ turns $EG$ into a
simplicial principal right (or left) $G$-set. 
We will refer to $EG$ as the {\em homogeneous\/} universal object
for $G$.

Let $V$ be a right $RG$-module.
The simplicial structure of $EG$ 
and the degreewise right diagonal $RG$-module structures
\eqref{diag22} (with $EG$ substituted for $G$),
relative to the right translation $G$-action on $EG$ where $G$ is viewed
as a subgroup of $EG$ and relative to the $RG$-module structure
on $V$, turn 
\begin{equation}
\mathrm{Map}(EG,V),
\label{EGV}
\end{equation}
into a {\em cosimplicial object in the category of right\/} 
$RG$-modules. Inspection establishes the following.

\begin{Proposition} \label{com1} As a cosimplicial object
in the category of right $RG$-modules, \linebreak
$\mathrm{Map}(EG,V)$ 
coincides with $\mathbf U(V)$ (introduced as {\rm \eqref{U}} above). \qed
\end{Proposition}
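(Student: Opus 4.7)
The plan is to match the two cosimplicial right $RG$-modules levelwise and then to check, separately, the $RG$-module structures, the cofaces, and the codegeneracies. Since $EG_n = G^{\times (n+1)}$, the canonical currying bijection yields a natural identification
\begin{equation*}
\mathrm{Map}(EG_n,V) \;\cong\; \mathrm{Map}(G^{\times (n+1)},V) \;=\; \mathcal U^{n+1}(V).
\end{equation*}
Under this identification I would first verify that the $RG$-structures agree: the diagonal embedding $G \hookrightarrow EG$ acts on $EG_n$ by simultaneous left translation $(x;x_0,\ldots,x_n)\mapsto (xx_0,\ldots,xx_n)$, and combining this with the right $G$-action on $V$ in the diagonal manner of \eqref{diag22} produces precisely the formula \eqref{action2}, which is the right $RG$-structure carried by $\mathbf U(V)$.

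For the cosimplicial operations, the faces and degeneracies of $EG$ are the classical omission $d_j(x_0,\ldots,x_{n+1}) = (x_0,\ldots,\widehat{x_j},\ldots,x_{n+1})$ and duplication $s_j(x_0,\ldots,x_{n-1}) = (x_0,\ldots,x_j,x_j,\ldots,x_{n-1})$, so the cofaces and codegeneracies on $\mathrm{Map}(EG,V)$ obtained by precomposition match the formulas for $\varepsilon^j$ and $\eta^j$ displayed just before Proposition \ref{prop2}. It remains to recognise those operations as the ones produced by the dual standard construction of the monad $(\mathcal U,\omega,\nu)$. Since $\omega_v$ is the constant map with value $v$, the slotting of $\omega$ between the $j$-th and $(j+1)$-st copies of $\mathcal U$ in $\varepsilon^j = \mathcal U^j \omega \mathcal U^{n-j+1}$ corresponds under currying to inserting a dummy $G$-factor at position $j$ on which the function does not depend, i.e.\ precomposition with $d_j$. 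Since $\nu_V(\Psi)(x) = \Psi(x)(x)$ is evaluation on the diagonal, $\eta^j = \mathcal U^j \nu \mathcal U^{n-j}$ corresponds under currying to precomposition with the degeneracy $s_j$.

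The main obstacle is purely bookkeeping: one has to track carefully which of the iterated $\mathcal U$-factors is identified with which $G$-coordinate after currying, and to check that the position indexing used by the dual standard construction for $\omega$ and $\nu$ matches the simplicial indexing of $EG$. Once these alignments are fixed, all three compatibilities follow by direct inspection, and the natural isomorphism $\mathrm{Map}(EG,V) \cong \mathbf U(V)$ of cosimplicial objects in $\mathrm{Mod}_{RG}$ is established.
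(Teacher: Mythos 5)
Your proposal is correct and carries out precisely the levelwise inspection that the paper itself invokes: the paper offers no written proof beyond the remark ``Inspection establishes the following,'' so your verification that the diagonal structure \eqref{diag22} on $\mathrm{Map}(EG_n,V)$ reproduces \eqref{action2}, and that under currying $\mathcal U^j\omega\,\mathcal U^{n-j+1}$ and $\mathcal U^j\nu\,\mathcal U^{n-j}$ become precomposition with the omission and duplication operators of $EG$, is exactly the intended argument.
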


Let $(EG)^{\mathrm{left}}$ be the simplicial group having
the iterated semi-direct product
\[
(EG)^{\mathrm{left}}_n
= G \ltimes G \ltimes \ldots \ltimes G\ 
(n+1\ \text{copies of}
\  G) 
\]
as degree $n$ constituent;
the {\em nonhomogeneous\/} face operators 
$\partial_j$ are given by
the familiar formulas
\begin{equation}
\begin{aligned}
\partial_j(x_0,x_1,\dots, x_n) &= 
(x_0,\dots, x_{j-2},x_{j-1}x_j,
x_{j+1},\dots, x_n)\ 
(0 \leq j < n)\\
\partial_n(x_0,x_1,\dots, x_n) &= (x_0,\dots, x_{n-1})
\end{aligned}
\label{face}
\end{equation}
and 
the {\em nonhomogeneous\/} degeneracy operators 
$s_j$ are given by
\begin{equation}
s_j(x_0,x_1,\dots, x_n) = 
(x_0,\dots, x_{j-1},e,x_j,\dots, x_n)
\ (0 \leq j \leq n).
\label{degeneracy}
\end{equation}
The associations
\begin{equation}
(x_0,x_1,\dots, x_n)\longmapsto
(x_0,x_0x_1,\ldots,x_0x_1x_2\ldots x_{n-1},x_0x_1x_2\ldots x_n),\ x_j \in G,
\label{assoc2}
\end{equation}
as $n$ ranges over the natural numbers,
induce an isomorphism of simplicial groups
\begin{equation}
(EG)^{\mathrm{left}} \longrightarrow EG. 
\label{2.5.5.l}
\end{equation}
We will refer to $(EG)^{\mathrm{left}}$ as the {\em nonhomogeneous left
universal object\/} for $G$.

As before, let $V$ be a right $RG$-module. 
We will now consider $\mathrm{Map}((EG)^{\mathrm{left}}, V)$ as a cosimplicial
right $RG$-module, the right $RG$-module structure being the diagonal structure
relative to the right $G$-structure on $V$ and the
left $G$-translation on $(EG)^{\mathrm{left}}$, the cosimplicial
structure being induced from the simplicial structure on 
$(EG)^{\mathrm{left}}$; we recall that the diagonal structure is given by the
association
\begin{equation}
\mathrm{Map}((EG)^{\mathrm{left}}, V) \times G
\longrightarrow
\mathrm{Map}((EG)^{\mathrm{left}}, V),
\ (\alpha,x) \longmapsto \alpha\cdot x,
\label{nonhomogac}
\end{equation}
where $(\alpha\cdot x)y = (\alpha(xy))x$, $x \in G$, 
$y \in (EG)^{\mathrm{left}}$;
here $\alpha$ ranges over maps from $(EG)^{\mathrm{left}}$ to $V$.

\begin{Proposition} \label{prop3}
Relative to the diagonal $G$-action on
$\mathrm{Map}((EG)^{\mathrm{left}}, V)$,
the morphism
\begin{equation}
\Phi=(\varphi_0, \ldots ) 
\colon \mathrm{Map}((EG)^{\mathrm{left}}, V) \longrightarrow \mathbf T(V)
\label{2.5.8}
\end{equation}
of graded $R$-modules which, in degree $n$, is given by the association
\begin{equation}
\begin{aligned}
\varphi_n &\colon\mathrm{Map}(G^{\times(n+1)}, V)
\longrightarrow\mathrm{Map}(G^{\times(n+1)}, \square V),
\\
\varphi_n(\alpha)(x_0,\ldots,x_n) 
&=(\alpha(x_0,\ldots,x_n))\cdot x_0\cdot \ldots \cdot x_n,
\ x_0,\ldots,x_n\in G,
\end{aligned}
\end{equation}
where $\alpha$ ranges over  maps from $G^{\times(n+1)}$ to $V$,
is an isomorphism of cosimplicial right $RG$-modules.
\end{Proposition}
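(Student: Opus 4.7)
My plan is to identify $\Phi$ as a composition of three previously established isomorphisms of cosimplicial right $RG$-modules, then read off the formula. The three ingredients are: (i) the pullback $\Pi\colon \mathrm{Map}(EG,V) \to \mathrm{Map}((EG)^{\mathrm{left}},V)$ induced by the simplicial-group isomorphism \eqref{2.5.5.l}, (ii) the identification of Proposition \ref{com1} of $\mathrm{Map}(EG,V)$ with $\mathbf{U}(V)$, and (iii) the isomorphism $\Theta\colon \mathbf{U}(V)\to \mathbf{T}(V)$ of Proposition \ref{prop2}. Granted that each of these is an isomorphism of cosimplicial right $RG$-modules, so is the composite $\Phi = \Theta\circ (\mathrm{id})\circ \Pi^{-1}$.

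To see that this composite actually coincides with the map $\Phi$ in the statement, take $\alpha \in \mathrm{Map}((EG)^{\mathrm{left}}_n, V)$. Pulling back to $\mathrm{Map}(EG_n, V)$ via the inverse of \eqref{assoc2} produces $\tilde\alpha(y_0,\ldots,y_n) = \alpha(y_0, y_0^{-1}y_1, \ldots, y_{n-1}^{-1}y_n)$. Applying $\vartheta_n$ of \eqref{ass1} gives
\[
\vartheta_n(\tilde\alpha)(x_0,\ldots,x_n) = \tilde\alpha(x_0, x_0x_1, \ldots, x_0x_1\cdots x_n)\cdot x_0\cdot \ldots \cdot x_n
= \alpha(x_0,\ldots,x_n)\cdot x_0\cdot \ldots \cdot x_n,
\]
which is precisely $\varphi_n(\alpha)(x_0,\ldots,x_n)$. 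This verifies the stated explicit formula and simultaneously proves bijectivity in each degree (the inverse is multiplication by $(x_0\cdots x_n)^{-1}$), as well as the cosimplicial and $RG$-equivariance properties.

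The main point one must justify carefully is that $\Pi$ is indeed a morphism of cosimplicial right $RG$-modules, i.e., that the right $G$-module structure \eqref{nonhomogac} on $\mathrm{Map}((EG)^{\mathrm{left}},V)$ matches the diagonal structure on $\mathrm{Map}(EG,V)$ transported across $\Pi$. Here the key observation is that under \eqref{2.5.5.l} the diagonal copy $G \hookrightarrow EG$ corresponds to the embedding $G \hookrightarrow (EG)^{\mathrm{left}}$, $x \mapsto (x,e,\ldots,e)$; componentwise left translation by $(x,\ldots,x)$ on $EG_n$ then transports, using the explicit formula $(y_0,\ldots,y_n) \leftrightarrow (y_0, y_0^{-1}y_1, \ldots, y_{n-1}^{-1}y_n)$ for the inverse of \eqref{assoc2}, to the action $(x_0,x_1,\ldots,x_n) \mapsto (xx_0, x_1, \ldots, x_n)$ on $(EG)^{\mathrm{left}}_n$, which is exactly the action underlying \eqref{nonhomogac}. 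The rest is formal; simplicial compatibility of $\Pi$ is automatic from the fact that \eqref{2.5.5.l} is an isomorphism of simplicial groups, and compatibility with the cosimplicial structure on $\mathbf{T}(V)$ is taken care of by Propositions \ref{com1} and \ref{prop2}.

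A direct verification is of course possible, checking bijectivity, $RG$-equivariance, and commutation with the coface and codegeneracy operators \eqref{face}--\eqref{degeneracy} against the standard homogeneous operators on $\mathbf{T}(V)$; the only potentially delicate step is the coface check, where the nonhomogeneous multiplication $x_{j-1}x_j$ in \eqref{face} is absorbed by the trailing factor $x_0\cdot \ldots \cdot x_n$ in the definition of $\varphi_n$. The factorization outlined above makes this combinatorial bookkeeping transparent and isolates it into the previously verified isomorphism $\Theta$.
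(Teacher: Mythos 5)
Your argument is correct, but it runs in the opposite direction from the paper's. The paper proves Proposition \ref{prop3} by direct computation (the proof reads ``a tedious but straightforward verification'' and leaves the details to the reader), and only \emph{afterwards} uses $\Phi$ together with the pullback $\Psi$ along \eqref{assoc2} to give the promised conceptual explanation of the explicit formula \eqref{ass1} in Proposition \ref{prop2}; that is, the paper reads the identity $\Theta=\Phi\circ\Psi$ as a derivation of $\Theta$ from $\Phi$. You read the same identity backwards, obtaining $\Phi=\Theta\circ\Psi^{-1}$ from Propositions \ref{com1} and \ref{prop2}, and your verification that the diagonal module structure on $\mathrm{Map}(EG,V)$ transports under \eqref{2.5.5.l} to the structure \eqref{nonhomogac} --- via the computation $(x,e,\dots,e)\cdot(x_0,\dots,x_n)=(xx_0,x_1,\dots,x_n)$ in $(EG)^{\mathrm{left}}_n$ --- is exactly right; indeed it supplies a detail the paper also asserts without proof when it introduces $\Psi$. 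What your factorization buys is a clean separation of concerns: the change of coordinates $\Psi$ is elementary, the fact that $\Theta$ is an isomorphism of cosimplicial right $RG$-modules is formal from the monad isomorphism of Corollary \ref{cor1}, and only one explicit formula ever needs hands-on checking. The caveat is that you have relocated, not eliminated, the combinatorial work: the formula \eqref{ass1} for $n\ge 1$ is itself only asserted in the proof of Proposition \ref{prop2} (``straightforward and left to the reader''), with its conceptual justification deferred to precisely the discussion that follows Proposition \ref{prop3}. So to keep the overall logic non-circular you must commit to verifying \eqref{ass1} directly for all $n$, rather than appealing to the paper's later explanation of it; once that is done, your proof is complete and arguably tidier than the route the paper takes.
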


\begin{proof} This comes down to a tedious but straightforward verification.
We leave the details to the reader. 
\end{proof}

We can now give a conceptual explanation for Proposition \ref{prop2}:
The association \eqref{assoc2} induces the isomorphism
\[
\Psi=(\psi_0,\psi_1,\ldots)\colon\mathrm{Map}(EG, V) \longrightarrow
\mathrm{Map}((EG)^{\mathrm{left}}, V)
\]
of cosimplicial right $RG$-modules which, in degree $n \geq 0$,
is given by the association
\begin{equation}
\begin{aligned}
\psi_n\colon \mathrm{Map}(G^{\times (n+1)}, V)
&\longrightarrow 
\mathrm{Map}(G^{\times (n+1)}, \square V)
\\
\psi_n(\alpha)(x_0,x_1,\ldots, x_n) &= 
\alpha (x_0,x_0x_1,\ldots, x_0x_1\ldots x_{n-1}, x_0x_1\ldots x_n)
\end{aligned}
\label{ass3}
\end{equation}
where $x_0,x_1,\ldots x_n \in G$ and 
$\alpha \in \mathrm{Map}(G^{\times (n+1)}, V)$.
The isomorphism $\Psi$,
combined with the isomorphism $\Phi$
in Proposition \ref{prop3}, induces the association spelled out in
Proposition \ref{prop2}.

We will now recall how these constructions can be formalized in the language of
comonads and dual standard constructions.

Let $\mathcal M$ be a category, $\mathcal L\colon
\mathcal M \to \mathcal M$ an endofunctor, 
let $\mathcal I$ denote the identity functor of $\mathcal M$,
and let $\ccc\colon \mathcal L \to  \mathcal I$  
and
$
\delta \colon 
\mathcal L \longrightarrow \mathcal L^2
$
be natural transformations.
Recall that the triple $(\mathcal L,\ccc, \delta)$ is defined 
to be a {\em comonad\/}
over $\mathcal M$ provided the following two diagrams commute:
\begin{equation*}
\begin{CD}
\mathcal L
@>{\delta}>>
\mathcal L^2
@.\phantom{xxxxxxx}@.
\mathcal L
@>=>>
\mathcal L
@>=>>
\mathcal L
\\
@V{\delta}VV
@V{\mathcal L\delta}VV
@.
@V{=}VV
@V{\delta}VV
@V{=}VV
\\
\mathcal L^2
@>{\delta \mathcal L}>>
\mathcal L^3
@.\phantom{xxxxxxx}@.
\mathcal I\mathcal L
@<{\ccc \mathcal L}<<
\mathcal L^2
@>{\mathcal L\ccc}>>
\mathcal L\mathcal I
\end{CD}
\end{equation*}
The natural transformations $\ccc$ and $\delta$ are referred to as the
{\em counit\/} and {\em diagonal\/}, respectively, of the comonad.

The  
{\em  standard construction\/} associated with the comonad 
$(\mathcal L,\ccc, \delta)$,
cf. \cite{duskinon},  \cite{maclbotw},
yields the simplicial object
\[
\left(\mathcal L^{n+1}, d_j\colon
\mathcal L^{n+2} \to \mathcal L^{n+1}, s_j\colon \mathcal L^{n+1} \to \mathcal L^{n+2}
\right)_{n \in \mathbb N};
\]
here, for $n \geq 1$,
\begin{align*}
d_j^n&= \mathcal L^j \ccc 
\mathcal L^{n-j}\colon \mathcal L^{n+1} \to \mathcal L^n,
\ j = 0, \ldots, n,
\\
s_j^n&= \mathcal L^j \delta \mathcal L^{n-j-1}\colon \mathcal L^{n} \to \mathcal L^{n+1},
\ j = 0, \ldots, n-1.
\end{align*}
Thus, given the object $W$ of $\mathcal M$, 
\[
\mathbf L(W)=\left(\mathcal L^{n+1}(W), d_j, s_j\right)_{n \in \mathbb N}
\]
is a simplicial object in $\mathcal M$,
the {\em standard object associated with\/} the object $W$ {\em and the comonad\/}
$(\mathcal L,\ccc, \delta)$,
where we do not distinguish in notation
between the natural transformations  $d_j$ and $s_j$ 
and the morphisms they induce after evaluation of
the corresponding functors at
an object;
under suitable circumstances,
the associated chain complex $\left|\mathbf L(W)\right|$ 
together with $\ccc_W\colon \left|\mathbf L(W)\right|\to W$ 
is then a relatively projective resolution of $W$.

\begin{Example} \label{ex5}
{\rm 
Let $\mathrm{Set}$ be the category of sets 
and ${}_G\mathrm{Set}$
that of left $G$-sets.
The functor $\mathcal V \colon {}_G\mathrm{Set} \to {}_G\mathrm{Set}$
which assigns to the left $G$-set $Z$ the left $G$-set $G \times Z$
endowed with diagonal $G$-action,
together with the natural transformation
$\gamma \colon\mathcal V\to \mathcal V^2 $
which, to the left $G$-set $Z$ assigns the morphism
\[
\gamma\colon G \times Z \to G \times G \times Z,\ 
\gamma(x,q) = (x,x,q),\ x \in G,\ q \in Z,
\]
and the natural transformation
$\alpha\colon \mathcal V\to \mathcal I$ which,
to the left $G$-set $Z$ assigns the morphism
\[
\alpha\colon G \times Z \to Z,\ 
\alpha(x,q) = q,\ x \in G,\ q \in Z,
\]
is a {\em comonad\/} over the category ${}_G\mathrm{Set}$.
The standard construction associated with the comonad 
$(\mathcal V,\alpha,\gamma)$ 
and the $G$-set
consisting of a single point is the simplicial set $EG$, 
that is, the homogeneous universal object associated with $G$,
the simplicial set $EG$ being viewed as a 
principal left $G$-set.
When $G$ is a Lie group, the resulting universal object $EG$ is
a simplicial principal left $G$-manifold.
}
\end{Example}

An adjunction determines a monad
in the following manner  \cite{maclbotw}:
Let $\mathcal F \colon \Cat \to \mathcal M$
be a functor, suppose that the functor $\square\colon \mathcal M \to \Cat$ 
is right-adjoint to $\mathcal F$, and let 
\[
\mathcal L= \mathcal F \square\colon \mathcal M \longrightarrow \mathcal M.
\]
Let $\uuu\colon \mathcal I \to  \square\mathcal F$ be the {\em unit\/},
$\ccc \colon  \mathcal L \to \mathcal I$ the {\em counit\/}
of the adjunction, and let $\delta$ be the natural transformation
\[
\delta = \mathcal F \uuu \square\colon \mathcal L =\mathcal F \square\longrightarrow 
\mathcal F \square\mathcal F \square= \mathcal L^2.
\]
The data $(\mathcal L,\ccc, \delta)$ constitute a {\em comonad\/}
over the category $\mathcal M$.

\begin{Example} \label{ex6}
{\rm 
Let $\mathcal F \colon \mathrm{Set} \to {}_G\mathrm{Set}$
be the functor
which assigns to the
set $Z$ the left $G$-set $G\times Z$, 
endowed with the obvious  
left $G$-action induced by left translation in $G$.
This functor is left adjoint to the forgetful functor 
$\square \colon {}_G\mathrm{Set} \to \mathrm{Set}$,
and the standard construction applied to the 
resulting comonad and the left $G$-set $Z$ yields 
a simplicial set  $\mathcal E(G,Z)$
endowed with a free left
$G$-action. For $Z$ a point $o$, as a simplicial principal left $G$-set,
$\mathcal E(G,o)$ comes down to the nonhomogeneous left universal $G$-object 
$(EG)^{\mathrm{left}}$ considered earlier.
When $G$ is a Lie group, the resulting left universal object 
$(EG)^{\mathrm{left}}$ is a simplicial principal left $G$-manifold.

Here is a comonadic version of Proposition \ref{prop11} above:

\begin{Proposition}
Given the left $G$-set $Z$, the isomorphism of left $G$-sets
\[
\mathcal F \square Z=G\times \square Z \longrightarrow 
G \times Z= \mathcal VZ,\quad (x,q)\mapsto (x,xq),\ x\in G, q \in Z,
\]
induces an isomorphism of comonads 
$(\mathcal L,\ccc, \delta)\longrightarrow (\mathcal V,\alpha, \gamma)$.
\end{Proposition}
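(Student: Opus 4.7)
The plan is to exhibit an explicit natural transformation $\Phi\colon\mathcal L\to\mathcal V$ with components
\[
\Phi_Z\colon G\times\square Z\longrightarrow G\times Z,\qquad (x,q)\longmapsto(x,xq),
\]
and verify in sequence: (i) each $\Phi_Z$ is a bijection of left $G$-sets; (ii) $\Phi$ is natural in $Z$; (iii) $\Phi$ intertwines the two counits $\ccc$ and $\alpha$; (iv) $\Phi$ intertwines the two comultiplications $\delta$ and $\gamma$. Steps (i) and (ii) immediately imply that $\Phi$ is a natural isomorphism at the level of endofunctors, and (iii)--(iv) then upgrade this to an isomorphism of comonads.

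For step (i), a two-sided inverse is $(x,q')\mapsto(x,x^{-1}q')$. Equivariance requires care about the source action: on $G\times\square Z=\mathcal L Z=\mathcal F\square Z$, the $G$-action is left translation on the first factor alone, whereas on $G\times Z=\mathcal V Z$ it is diagonal. The identity $\Phi_Z(gx,q)=(gx,(gx)q)=g\cdot(x,xq)$ is then exactly equivariance. Step (ii) is clear from the uniform formula for $\Phi_Z$.

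For step (iii), the counit of the adjunction $\mathcal F\dashv\square$ is $\ccc_Z(x,q)=xq$, hence
\[
\alpha_Z\circ\Phi_Z(x,q)=\alpha_Z(x,xq)=xq=\ccc_Z(x,q).
\]
For step (iv), I would unpack $\delta_Z=\mathcal F\uuu_{\square Z}$ with $\uuu_Z(q)=(e,q)$, giving $\delta_Z(x,q)=(x,(e,q))\in\mathcal L^2 Z$. Writing the induced square natural transformation as $\Phi^2=\mathcal V\Phi\circ\Phi\mathcal L$, one then computes
\[
\Phi_{\mathcal L Z}(x,(e,q))=(x,\,x\cdot(e,q))=(x,(x,q)),\qquad(\mathcal V\Phi_Z)(x,(x,q))=(x,x,xq),
\]
and on the other side $\gamma_Z\Phi_Z(x,q)=\gamma_Z(x,xq)=(x,x,xq)$, so the diagonal square commutes.

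The only delicate point, and the main obstacle, is in step (iv): the $G$-action on $\mathcal L^2Z=G\times G\times\square Z$ is by left translation on the outermost factor only, whereas $\mathcal V^2Z$ carries a diagonal action throughout, and the application of $\Phi_{\mathcal L Z}$ pivots on this precise distinction. Once the actions are correctly identified, the remainder is a routine evaluation; the calculation also serves as the conceptual explanation behind the reindexing \eqref{ass3} and Proposition \ref{prop2} that was advertised earlier in the section.
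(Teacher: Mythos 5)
Your verification is correct and is exactly the direct computation the paper has in mind (it states this Proposition without proof, as the comonadic dual of Proposition \ref{prop11}, which it does verify by the analogous hands-on calculation); in particular you correctly identify the one genuinely delicate point, namely that in evaluating $\Phi_{\mathcal L Z}$ the action $x\cdot(e,q)=(x,q)$ on $\mathcal L Z$ is left translation on the first factor only. The only step you wave at too quickly is naturality in (ii): the equality $(x,xf(q))=(x,f(xq))$ is not automatic from the formula but uses that morphisms of left $G$-sets are $G$-equivariant, which of course they are, so nothing is lost.
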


This isomorphism of comonads induces 
the isomorphism 
$(EG)^{\mathrm{left}} \to EG$, cf. \eqref{2.5.5.l} above, viewed merely as
an isomorphism
of simplicial principal left $G$-sets. The present
identification in terms of the underlying comonads
yields a conceptual explanation for this isomorphism
of simplicial principal left $G$-sets.
When $G$ is a Lie group, 
that isomorphism is one of simplicial principal left $G$-manifolds.

}
\end{Example}

\begin{Remark} \label{groupvsgroupoid}
{\rm Let $G$ be a Lie group and let $V$ be a differentiable right $G$-module. 
In view of Proposition {\rm \ref{com1}},
the differentiable cosimplicial right $G$-module
$\mathbf U(V)$ introduced as {\rm \eqref{UVsmooth}} above
is simply that of smooth $V$-valued functions on
the simplicial principal left $G$-manifold $EG$.
Likewise, in view of Proposition {\rm \ref{prop3}},
the differentiable cosimplicial right $G$-module
$\mathbf T(V)$ introduced as {\rm \eqref{Tsmooth}} above
is that of smooth $V$-valued functions on
the simplicial principal left $G$-manifold $(EG)^{\mathrm{left}}$.
However, the construction in the previous section shows that
the attempt to introduce cohomology
simply by taking functions
on an appropriate simplicial object
does no longer work for a general groupoid,
cf. Remark 
{\rm \ref{groupvsgroupoid2}}. }
\end{Remark}

In Section \ref{ext} we will introduce yet another example of a comonad
which arises from an extension of Lie-Rinehart algebras.
This example is crucial for the development of the 
equivariant de Rham cohomology relative to a general locally trivial
Lie groupoid.

\section{Lie-Rinehart equivariant cohomology}
\label{lierine}

Let  $\mathfrak g$ be a Lie algebra over a general ground ring $R$.
Recall that,  as a graded Lie algebra, the cone $C\fra g$
in the category of  differential graded $R$-Lie algebras
is the semi-direct product $s\fra g \rtimes \fra g$
of $\fra g$ with the suspension $s\fra g$; here the
suspension $s\fra g$ is just $\fra g$ itself, but regraded up by 1,
and $s\fra g$ is considered as an abelian graded Lie algebra
concentrated in degree 1. The differential $d$ is given by
$d(sY) =Y$, where $Y \in \fra g$.
See \cite{koszultw} for details.

Let $G$ be a Lie group and $\mathfrak g$ its Lie algebra,
and let $C\fra g$ be the cone on $\fra g$ 
in the category of  differential 
graded Lie algebras.
Thus momentarily we are working over the reals as ground ring.
Given a smooth manifold $M$ and a smooth action of $G$ on $M$,
the infinitesimal $\fra g$-action $\fra g\to \mathrm{Vect}(M)$
on $M$ induces a canonical (differential graded) $C\fra g$-module structure
on the de Rham algebra $\mathcal A(M)$ of $M$ via the operations of 
contraction and Lie derivative.
In \cite{koszultw}, we established a close
relationship between the $G$-equivariant de Rham theory
of $M$ and the relative 
$\mathrm{Ext}_{(C\fra g,\fra g)}(\mathbb R,\mathcal A(M) )$
(cf. Example \ref{ex7} above)
where the term relative is used in the sense of \cite{hochsone}.
In particular, when $G$ is compact,
the invariants
$\mathrm{Ext}_{(C\fra g,\fra g)}(\mathbb R,\mathcal A(M))^{\pi_0(G)}$
relative to the group $\pi_0(G)$ of connected components of $G$
coincide with the $G$-equivariant de Rham cohomology of $M$,
cf. \cite{koszultw}.
The question we will explore now is whether
that relative derived $\mathrm{Ext}$ has a meaning for a general
Lie-Rinehart algebra and, if so, what it then signifies.
Let $R$ be an arbitrary commutative ring with 1 and $\mathfrak g$ 
an $R$-Lie algebra, which we suppose to be projective as an $R$-module.
In \cite{koszultw}  
we defined the relative differential
$\mathrm{Ext}_{(C\fra g,\fra g)}$ in terms of a suitable monad;
we have reproduced the details in Example \ref {ex7} above.
Since, over the reals,
when $G$ is a compact and connected Lie group,
$\mathrm{Ext}_{(C\fra g,\fra g)}(\mathbb R,\mathcal A(M))$
coincides with the $G$-equivariant de Rham cohomology of $M$,
we refer to 
$\mathrm{Ext}_{(C\fra g,\fra g)}(R,\,\cdot \,)$
as the {\em infinitesimal equivariant cohomology\/}
relative to $\fra g$.
In \cite{koszultw}, we also spelled out a suitable
comonad which, in turn, leads to the corresponding 
relative bar resolution, where the term \lq\lq relative\rq\rq\ 
is intended to hint at the
fact that this is the bar resolution for the relative situation
under discussion, cf. e.~g. \cite{maclaboo} (Ch. 9 and Ch. 10).
Since $\mathrm U[C\fra g]$ is actually a Hopf algebra,
a {\it homogeneous\/} version of the corresponding relative
bar resolution is available as well;
this  homogeneous
relative bar resolution
leads to the {\it simplicial Weil coalgebra\/} explored in
\cite{koszultw}. The construction of the simplicial 
Weil coalgebra does not extend to a general Lie-Rinehart algebra.
However, the construction of the dual object, the CCE 
algebra or, equivalently, {\em Maurer-Cartan algebra\/}, cf. \cite{vanesthr}
for this terminology, 
extends to a general Lie-Rinehart algebra.
This leads to a notion of {\em Lie-Rinehart equivariant cohomology\/}.
We will now explain the details.

We begin with some preparations.
Let $\mathfrak g$ and $\mathfrak g'$ be ordinary Lie algebras
and let $\Phi\colon \mathfrak g \to \mathfrak g'$ be a morphism
of Lie algebras. Let $N$ be a left $\mathfrak g$-module, $N'$ 
a left $\mathfrak g'$-module,
view 
$N'$ 
as a left $\mathfrak g$-module via $\Phi$,
and let $\varphi \colon N'\to N$ be a morphism of $R$-modules.
Recall that the pair $(\varphi,\Phi)$ is referred to as a {\em morphism of actions\/}
$(N,\mathfrak g) \to (N',\mathfrak g')$ provided $\varphi$ is a morphism of 
$\mathfrak g$-modules.
We will use the notation
$\mathrm{Alt}(\mathfrak g,N)$ etc. for the ordinary graded
$R$-module or chain complex of $N$-valued alternating forms
on $\mathfrak g$.
It is a classical fact (and entirely obvious) that 
a {\em morphism of actions\/}
$(N,\mathfrak g) \to (N',\mathfrak g')$ induces the morphism
\begin{equation}
\begin{CD}
\mathrm{Alt}(\mathfrak g',N')
@>{\Phi^*}>>
\mathrm{Alt}(\mathfrak g,N')
@>{\varphi_*}>> \mathrm{Alt}(\mathfrak g,N)
\end{CD}
\label{ce1}
\end{equation}
between the CCE complexes and hence the morphism
\[
\mathrm H^*(\mathfrak g',N')
\longrightarrow\mathrm H^*(\mathfrak g,N)
\]
on cohomology.
The morphism \eqref{ce1} can also be written as the composite
\begin{equation}
\begin{CD}
\mathrm{Alt}(\mathfrak g',N')
@>{\varphi_*}>>
\mathrm{Alt}(\mathfrak g',N)
@>{\Phi^*}>> \mathrm{Alt}(\mathfrak g,N)
\end{CD}
\label{ce2}
\end{equation}
of morphisms of graded $R$-modules
but the graded $R$-module $\mathrm{Alt}(\mathfrak g',N)$
does not in general acquire a differential compatible with the
other structure since only $\mathfrak g$ is supposed to act on $N$,
not $\mathfrak g'$.

Adapting the notion of morphism of actions to Lie-Rinehart algebras
via the description \eqref{ce2}
leads to the notion of {\em comorphism\/} of Lie-Rinehart algebras,
introduced in \cite{higmathr}.
A comorphism is not the dual of a morphism, though.
The corresponding notion for
Lie algebroids is that of {\em morphism of Lie algebroids\/} and goes back to
\cite{almekump}.
Thus, let $(A,L)$ and $(A',L')$ be Lie-Rinehart algebras.
A {\em comorphism\/} $(\varphi,\Phi)\colon (A,L) \to (A',L')$ of Lie-Rinehart algebras
consists of a morphism $\varphi\colon A' \to A$ of algebras and a morphism
$\Phi \colon L \to A\otimes_{A'}L'$ of $A$-modules such that (i) 
and (ii) below 
are satisfied:

\noindent
(i) The diagram
\begin{equation}
\begin{CD}
L\otimes A'
@>{L \otimes \varphi}>>L\otimes A
@>{\mathrm{action}}>> A
\\
@V{\Phi \otimes A'}VV
@.
@VV{A}V
\\
A\otimes_{A'}L'\otimes A'
@>>{A\otimes \mathrm{action}}> A\otimes_{A'} A'
@>>{\mathrm{can}}>
A
\end{CD}
\label{i}
\end{equation}
is commutative.

\noindent
(ii) Given $\aalpha_1,\aalpha_2 \in L$ with $\Phi(\aalpha_1) =\sum a^1_{i} \beta^1_{i}$ and
$\Phi(\aalpha_2) =\sum a^2_{j} \beta^2_{j}$,
\begin{equation}
\Phi([\aalpha_1,\aalpha_2])=
\sum a^1_{i} a^2_{j} \otimes[\beta^1_{i},\beta^2_{j}]
+
\sum \aalpha_1(a^2_{j})\otimes \beta^2_{j} 
-
\sum \aalpha_2(a^1_{i})\otimes \beta^1_{i} .
\label{ii}
\end{equation}

It is not required that $\Phi$ lift to a morphism
$L \to L'$ of $R$-modules or even $R$-Lie algebras.

\smallskip\noindent
{\sc Example\/.} 
A smooth map $f\colon M \to N$ between smooth manifolds induces the comorphism
\[
(f^*,f_*)\colon (C^{\infty}(M),\mathrm{Vect}(M))\longrightarrow 
(C^{\infty}(N),\mathrm{Vect}(N))
\]
of Lie-Rinehart algebras in an obvious manner;
in particular
$f^*\colon C^{\infty}(N)\to C^{\infty}(M)$
is the induced morphism of commutative algebras.

A {\em comorphism\/} $(\varphi,\Phi)\colon (A,L) \to (A',L')$ of Lie-Rinehart algebras
induces the morphism
\begin{equation}
\begin{CD}
\mathrm{Alt}_{A'}(L',A')
@>{\varphi_*}>>
\mathrm{Alt}_{A}(A\otimes _{A'}L',A)
@>{\Phi^*}>>
\mathrm{Alt}_{A}(L,A)
\end{CD}
\label{MCA}
\end{equation}
of Maurer-Cartan algebras. As a morphism of graded $A'$-algebras,
this morphism is well defined in view of (i). The requirement (ii)
entails the compatibility with the differentials.

Let $(A,L)$ be a Lie-Rinehart algebra.
The cone $CL$
in the category of differential graded
$R$-Lie algebras has no obvious meaning
as a differential graded Lie-Rinehart algebra. Indeed, this cone
contains the suspended object $sL$ as a graded Lie ideal
but {\em not\/} as a differential graded Lie ideal,
and the injection of $L$ into $CL$ does not induce
any kind of $CL$-action on $A$ whatsoever.

On the other hand, the Lie-Rinehart axioms imply that the 
(differential graded) action
\begin{equation}
CL \otimes \mathrm{Alt}_R(L,\MMM) \longrightarrow \mathrm{Alt}_R(L,\MMM)
\end{equation}
passes to an action
\begin{equation}
CL \otimes \mathrm{Alt}_A(L,\MMM) \longrightarrow \mathrm{Alt}_A(L,\MMM)
\end{equation}
of $CL$ on
$\mathrm{Alt}_A(L,\MMM)$
which is in fact a differential graded
$(CL)$-action on $\mathrm{Alt}_A(L,\MMM)$.
Since the operation of contraction is compatible with the $A$-module structure
in the sense that 
$i_{a\aalpha} = ai_{\aalpha}$ for $a \in A$ and $\aalpha \in L$,
the operation
$\lambda$ of Lie derivative necessarily satisfies the familiar 
identity
\begin{equation}
\lambda_{a\aalpha}(\omega)= a\lambda_{\aalpha}(\omega) + 
da\cup i_{\aalpha}(\omega)\ (a \in A, \ \aalpha \in L,\ 
\omega \in \mathrm{Alt}_A(L,\MMM)).
\label{CL}
\end{equation}
In particular, relative to the operation of Lie-derivative,
$\mathrm{Alt}_A(L,\MMM)$ is just an $L$-module,
not an $(A,L)$-module.

Let $\mathcal N$ be a (differential graded) 
$\mathrm{Alt}_A(L,A)$-module which is, 
furthermore, endowed with a $(CL)$-module structure;
for intelligibility we will write the $(CL)$-module structure 
on $\mathcal N$ 
in terms of the familiar notation $\lambda$ of Lie-derivative and
$i$ of contraction.
Abstracting from the property
just isolated, we will refer to $\mathcal N$ as an 
$(A,CL)$-{\em module\/} whenever the identity \eqref{CL}
is satisfied, with $\mathcal N$ substituted for
$\mathrm{Alt}_A(L,\MMM)$.

Consider the differential graded 
algebra $\mathrm U_A[CL]$ of operators on
$\mathrm{Alt}_A(L,A)$ generated by $A$ and $CL$.
In this algebra, given
$a \in A$ and $\aalpha \in L$, the operators 
$\lambda_{a\aalpha}$, $\lambda_{\aalpha}$ and $i_\aalpha$ are related by the identity
\begin{equation}
\lambda_{a\aalpha}-a\lambda_{\aalpha}= (da)i_\aalpha.
\label{id1}
\end{equation}
In order for this identity to make sense we must extend the
coefficients from $A$ to
$\mathrm{Alt}_A(L,A)$.
Thus, consider the crossed product algebra
\[
\mathrm U[L]\odot \mathrm{Alt}_A(L, \Lambda_A[sL]);
\]
requiring that the  identity \eqref{id1} be satisfied
leads us to the  quotient algebra
such that this identity holds.
Consequenctly the differential graded 
algebra $\mathrm U_A[CL]$ of operators on
$\mathrm{Alt}_A(L,A)$ generated by $A$ and $CL$
is a certain quotient of that algebra.

For an ordinary Lie algebra $\fra g$,
in \cite{koszultw}, we introduced the {\em simplicial Weil coalgebra\/}
of $\fra g$ to be the simplicial CCE coalgebra
$\Lambda'_{\partial}[sE\fra g]$ of the total simplicial object
$E\fra g$ associated with $\fra g$. For reasons explained in
\cite{bv}, the construction of the CCE coalgebra does not 
extend to a general Lie-Rinehart algebra. However the dual object,
the {\em CCE algebra\/} or {\em Maurer-Cartan\/} algebra, extends; in fact, 
this is just the differential graded commutative algebra
$\mathrm{Alt}_A(L,A)$ associated with the Lie-Rinehart algebra
$(A,L)$. 
On the symmetric monoidal
category of differential graded cocommutative coalgebras over $R$,
the {\em total\/} object functor associates with any
differential graded cocommutative coalgebra
$C$ the simplicial differential graded coalgebra $EC$
whose condensed object $|EC|^{\bullet}$ is a contractible
differential graded coalgebra in the sense that
the counit $|EC|^{\bullet} \to R$
is a chain equivalence.

On the symmetric monoidal
category of differential graded commutative algebras over $R$,
the construction of the total simplicial object dualizes
to that of a {\em total cosimplicial\/} differential graded algebra
functor $\mathcal C$;
this functor assigns to
the differential graded commutative algebra $\mathcal A$
the differential graded commutative algebra
$\mathcal C \mathcal A$
having, for $n \geq 0$, the tensor power  
$(\mathcal C \mathcal A)_{n}= \mathcal A^{\otimes (n+1)}$
 {\em over the ground ring\/} $R$ of
$n+1$ copies of $\mathcal A$
as degree $n$ constituent, 
with coface and codegeneracy operators being induced by the
multiplication map 
$\mu \colon \mathcal A\otimes \mathcal A \to \mathcal A$
of $\mathcal A$ and the unit map $\uuu \colon R \to A$ of $A$.
Totalization and normalization yields the 
differential 
graded commutative algebra
\begin{equation}
\left|\mathcal C \mathcal A\right|^{\bullet}
\end{equation}
which is a contractible
differential graded algebra in the sense that
the unit $R \to |\mathcal C \mathcal A|^{\bullet}$
is a chain equivalence.

This construction applies, in particular, to 
the differential graded $R$-algebra
$\mathcal A=\mathrm{Alt}_A(L,A)$ but, beware,
the tensor powers are taken over the ground ring $R$
(and not over $A$; indeed the construction
would not then be well defined). Furthermore, 
$CL$ being an ordinary differential graded $R$-Lie algebra,
the $(CL)$-action on $\mathrm{Alt}_A(L,A)$ extends to a
$(CL)$-action on $\mathcal C\mathrm{Alt}_A(L,A)$
and hence on
$\left|\mathcal C \mathrm{Alt}_A(L,A)\right|^{\bullet}$, and the invariants
$\left(\left|\mathcal C \mathrm{Alt}_A(L,A)\right|^{\bullet}\right)^{CL}$ 
constitute a differential graded
$R$-subalgebra.

For illustration, consider the special case where 
$A$ coincides with the ground ring and where
$L$
is an ordinary Lie algebra $\fra g$.
Then 
$\left(\left|\mathcal C \mathrm{Alt}_A(L,A)\right|^{\bullet}\right)^{CL}$ 
comes down to 
$\left(\left|\mathcal C \mathrm{Alt}(\fra g,R)\right|^{\bullet}\right)^{C\fra g}$, and
$\mathcal C \mathrm{Alt}(\fra g,R)$ is canonically isomorphic to
\[
\mathrm{Hom}(\Lambda'_{\partial}[sE\fra g],R) \cong 
\mathrm{Hom}(E\Lambda'_{\partial}[s\fra g],R)\]
 whence
\begin{equation}
\mathrm H\left(\left(\left|\mathcal C \mathrm{Alt}
(\fra g,R)\right|^{\bullet}\right)^{C\fra g}\right)
\cong \mathrm{Ext}_{(C\fra g, \fra g)}(R,R) .
\end{equation}
In particular, when $\fra g$ is reductive,  $\mathrm H^*(\fra g)$
is the exterior Hopf algebra $\Lambda[\mathrm{Prim}(\fra g)]$
on its primitives $\mathrm{Prim}(\fra g)\subseteq \mathrm H^*(\fra g)$, and
\begin{equation}
\mathrm{Ext}_{(C\fra g, \fra g)}(R,R)
\cong
\mathrm S[s^{-1}\mathrm{Prim}(\fra g)],
\end{equation}
the symmetric algebra
on the desuspension  $s^{-1}\mathrm{Prim}(\fra g)$
of the
primitives $\mathrm{Prim}(\fra g)\subseteq \mathrm H^*(\fra g)$.

Slightly more generally, suppose that $\fra g$ acts on the 
commutative algebra $A$ by derivations, and let
$L= A \odot \fra g$ be the resulting crossed product
$(R,A)$-Lie algebra, cf. e.~g. \cite{poiscoho}. Then the differential graded algebra
$\mathrm{Alt}_A(L,A)$ comes down to
$\mathrm{Alt}(\fra g,A)$
 and
$\mathcal C \mathrm{Alt}_A(L,A)$ is canonically isomorphic to
$\mathcal C \mathrm{Alt}(\fra g,A)$
or
\[
\mathrm{Hom}(\Lambda'_{\partial}[sE\fra g],A) \cong 
\mathrm{Hom}(E\Lambda'_{\partial}[s\fra g],A).
\]
When $\fra g$ is reductive,
$\mathrm H^*(\fra g,A) \cong \Lambda[\mathrm{Prim}(\fra g)] \otimes A^{\fra g}$, and
\begin{equation}
\mathrm H\left(\left(\left|\mathcal C \mathrm{Alt}
(\fra g,A)\right|^{\bullet}\right)^{C\fra g}\right)
\cong  \mathrm S[s^{-1}\mathrm{Prim}(\fra g)] \otimes A^{\fra g}.
\end{equation}

To unravel the structure, we note that, 
relative to the obvious structures,
$(A\otimes A,L\otimes A)$ and
$(A\otimes A,A\otimes L)$ acquire Lie-Rinehart 
structures, and the direct sum
\begin{equation}
L^{\times}=L\otimes A \oplus A\otimes L
\label{dir}
\end{equation} 
of $(A\otimes A)$-modules
acquires an obvious $(R,A \otimes A)$-Lie algebra structure.
We use the notation $L^{\times}$ since 
$(A\otimes A,L^{\times})$ is the categorical product 
of $(A,L)$ with itself in the category of Lie-Rinehart
algebras, and we will therefore occasionally write
\begin{equation}
(A,L)\times (A,L)= (A\otimes A,L^{\times}).
\label{prod}
\end{equation}
The corresponding product construction for Lie algebroids
was introduced in \cite{almekump}.
A special case of the construction in \cite{almekump} is this:
Over the reals $\mathbb R$ as ground ring,
when $L$ is the $(\mathbb R,C^{\infty}(\MMM))$-Lie algebra
of smooth vector fields on the smooth manifold $\MMM$,
when the ordinary tensor product is replaced with a
suitably completed tensor product (Frech\'et tensor product),
\eqref{dir} is the familiar 
decomposition of the Lie algebra of smooth vector 
fields on the product $\MMM \times \MMM$
into two summands corresponding 
to the two factors $\MMM$.
Accordingly
\begin{equation}
\mathrm{Alt}_{A \otimes A} (L^{\times},A\otimes A)
\cong
\mathrm{Alt}_{A}(L,A) \otimes \mathrm{Alt}_{A}(L,A)
\end{equation} 
canonically.
The ordinary diagonal morphism
\[
\Delta \colon L \longrightarrow 
L \oplus L \cong A \otimes_{A\otimes A}L^{\times},
\ 
\Delta(\rhoo) =  (\rhoo,\rhoo)
\ (\rhoo \in L)
\]
of $A$-modules
and the multiplication map $\mu \colon A \otimes A \to A$
constitute a comorphism
\begin{equation}
(\mu,\Delta)\colon (A,L) \longrightarrow (A \otimes A,  L^{\times})
=(A,L)\times (A,L)
\end{equation}
of Lie-Rinehart algebras,
the {\em  Lie-Rinehart diagonal map\/} for $(A,L)$.
In this case,
the defining condition \eqref{i} of a comorphism
simply comes down to the fact that
$L$ acts on $A$ by derivations and the defining condition \eqref{ii} is 
plainly satisfied.

The induced morphism of Maurer-Cartan algebras
of the kind \eqref{MCA}
coincides with the multiplication map of $\mathrm{Alt}_A(L,A)$.

With the notion of comorphism as morphism, Lie-Rinehart algebras
constitute a symmetric monoidal category, and the 
Lie-Rinehart diagonal is a diagonal morphism in this category.
Consequently, 
given the Lie-Rinehart algebra $(A,L)$, the {\em total\/} object
\begin{equation}
E(A,L) = (EA,EL)
\label{tot1}
\end{equation}
is defined as a {\em simplicial\/} Lie-Rinehart algebra.
Thus $EL$ is a simplicial $R$-Lie algebra,
$EA$ is a cosimplicial $R$-algebra, and the structure is encoded in
two pairings
\begin{equation}
EA \otimes EL \longrightarrow EL,
\
EL \otimes EA \longrightarrow EA
\label{tot2}
\end{equation}
satisfying the appropriate Lie-Rinehart axioms.

Application of the Maurer-Cartan functor
$\mathrm{Alt}_{(\,\cdot \,)}(\,\cdot \,,\,\cdot \, )$
to the simplicial Lie-Rinehart algebra $(EA,EL)$
yields the cosimplicial differential graded algebra
\begin{equation}
\mathrm{Alt}_{EA}(EL,EA).
\label{smc2}
\end{equation}
However this is precisely the
cosimplicial differential graded algebra
$\mathcal C\mathrm{Alt}_{A}(L,A)$ considered earlier, 
and we will refer to it as
the {\em cosimplicial Maurer-Cartan algebra associated with
the simplicial Lie-Rinehart algebra\/} $(EA,EL)$.
Furthermore, the $(CL)$-action on 
$\mathrm{Alt}_{A}(L,A)$ by contraction and Lie-derivative
extends canonically to a $(CL)$-action on 
$\mathrm{Alt}_{EA}(EL,EA)$ compatible with the cosimplicial 
structure in the sense that each cosimplicial operator
is compatible with the $(CL)$-actions.

Let $\mathcal N$ be an $(A,CL)$-module
and view it as the trivial cosimplicial $(A,CL)$-module.
The product $(EA)\otimes\mathcal N$
of cosimplicial objects is defined
and inherits a canonical
$(A,CL)$-module structure
compatible with the cosimplicial structure.
Application of the Maurer-Cartan functor
$\mathrm{Alt}_{(\,\cdot \,)}(\,\cdot \,,\,\cdot \, )$
to the simplicial Lie-Rinehart algebra $(EA,EL)$
yields the cosimplicial differential graded
$\mathrm{Alt}_{EA}(EL,EA)$-module
\begin{equation}
\mathrm{Alt}_{EA}(EL,(EA)\otimes\mathcal N).
\label{smc3}
\end{equation}
We define the $(A,L)$-equivariant cohomology
$\mathrm H^*_{(A,L)}(\mathcal N)$ of $\mathcal N$
 to be the homology
\begin{equation}
\mathrm H^*_{(A,L)}(\mathcal N)=
\mathrm H\left(
\mathrm{Alt}_{EA}(EL,(EA)\otimes\mathcal N)^{CL}
\right).
\label{eq}
\end{equation}
We can view this theory also as the relative
\begin{equation}
\mathrm{Ext}_{(A,CL,L)}(EA,\mathcal N).
\label{extr}
\end{equation}

\noindent
{\sc Illustration\/.}
Let $\mathcal F$ be a foliation of the smooth manifold
$M$, let $A=C^{\infty}(M)$,
let $\tau_{\mathcal F}$ be the tangend bundle of $\mathcal F$, 
and let $L$ be the $(\mathbb R,A)$-Lie algebra of smooth vector
fields tangent to $\mathcal F$. Thus the injection
$L \to L_M=\mathrm{Vect}(M)$ is a morphism of $\mathbb R,A)$-Lie 
algebras, and the de Rham complex 
$\mathcal N=\mathcal A(M)=\mathrm{Alt}_A(L_M,A)$
of $M$ is an $(A,L,CL)$-module via the $A$-module structure
and the operations of contraction and Lie derivative.
Under these circumstances,
$\mathrm H^*_{(A,L)}(\mathcal N)$ is the
{\em de Rham cohomology of $M$ that is equivariant relative to the
foliation\/} $\mathcal F$.

\section{Extensions of Lie-Rinehart algebras}
\label{ext}
The algebraic analog of an \lq\lq Atiyah sequence\rq\rq\ or of a
\lq\lq transitive Lie algebroid\rq\rq\ (see below for details) is
an extension of Lie-Rinehart algebras \cite{extensta}.

Let $L'$, $L$, $L''$ be $(R,A)$-Lie algebras. An {\em extension\/}
of $(R,A)$-Lie algebras is a short exact sequence
\begin{equation}
\begin{CD}
\mathrm {\mathbf e} \colon 0 @>>> L' @>>> L @>p>> L''
\longrightarrow 0 \end{CD}
\label{2.1}
\end{equation}
in the category of $(R,A)$-Lie algebras; notice in particular that
the Lie algebra $L'$ necessarily acts trivially on $A$.

\begin{Example} \label{2.222}
{\rm Let the ground ring $R$ be the field $\mathbb R$ of real numbers,
let $B$ be a smooth finite dimensional manifold, let $A$ be the
algebra of smooth functions on $B$, let $G$ be a Lie group, and
let $\xi \colon \QQ \to B$ be a principal right $G$-bundle. The vertical
subbundle $\mathrm T \QQ_{\mathrm{vert}} \to \QQ$ of the tangent bundle 
$\tau_\QQ\colon \mathrm T\QQ \to B$
of $\QQ$ is well known to be trivial (beware, not equivariantly
trivial), having as fibre the Lie algebra $\fra g$ of $G$, that
is, $\mathrm T \QQ_{\mathrm{vert}} \cong \QQ \times \fra g$. 
Dividing out the $G$-actions, we
obtain an extension
\begin{equation}
0 \longrightarrow \mathrm{ad}(\xi) \longrightarrow \tau_\QQ/G
\longrightarrow \tau_B \longrightarrow 0 \label{2.2.1}
\end{equation}
of vector bundles over $B$, where $\tau_B$ is the tangent bundle
of $B$. This sequence has been introduced by {\rm
Atiyah~\cite{atiyaone}} and is now usually referred to as the {\em
Atiyah sequence \/} of the principal bundle $\xi$; here
$\mathrm{ad}(\xi)\colon P\times_G\fra g \longrightarrow B$ 
is the bundle associated with the principal
bundle and the adjoint representation of $G$ on its Lie algebra
$\fra g$. The spaces ${\fra g}(\xi)= \Gamma\mathrm{ad}(\xi)$ and
$\mathrm E(\xi)=\Gamma(\tau_\QQ/G)\cong \Gamma(\tau_\QQ)^G$ 
of smooth sections inherit
obvious Lie algebra structures, in fact $(\mathbb R,A)$-Lie
algebra structures, and
\begin{equation}
0 \longrightarrow \fra g(\xi) \longrightarrow \mathrm E(\xi)
\longrightarrow \mathrm{Vect}(B) \longrightarrow 0 \label{2.2.2}
\end{equation}
is an extension of  $(\mathbb R,A)$-Lie algebras; here
$\mathrm{Vect}(B)$ is the $(\mathbb R,A)$-Lie algebra of smooth vector
fields on $B$, and $\fra g(\xi)$ is well known to be, in an obvious way,
the Lie
algebra of the {\em group\/} of {\em gauge transformations\/}
 of
$\xi$, see Section \ref{algebroids} below. The resulting Lie algebroid
\begin{equation}
\tau_\QQ\big/G\colon (\mathrm T\QQ)\big/G \longrightarrow B
\label{atiy2}
\end{equation}
over $B$ is plainly transitive in the sense that
the \lq\lq anchor\rq\rq\ $\mathrm E(\xi)\to \mathrm{Vect}(B)$
is surjective.
We will refer to this Lie algebroid as the {\em Atiyah algebroid\/}
of the principal bundle $\xi$.
}
\end{Example}

Over a general ground ring $R$, consider an extension 
\begin{equation*}
\mathrm {\mathbf e} \colon 0 \longrightarrow L' \longrightarrow L 
\longrightarrow L''
\longrightarrow 0 
\end{equation*}
of $(R,A)$-Lie algebras
of the kind \eqref{2.1}.
To avoid unnecessary complications,
we will suppose that $L'$, $L$, and $L''$
are finitely generated and projective as $A$-modules.
Since $L'$ is an ordinary $A$-Lie algebra
that acts trivially on $A$,
unlike the situation in Section \ref{lierine} above,
the cone $C^{\mathrm {\mathbf e}}L = (sL')\rtimes L$
in the category of differential graded
$(R,A)$-Lie algebras makes perfect sense
as the differential graded Lie-Rinehart algebra
$(A,C^{\mathrm {\mathbf e}}L )$;
more precisely, the projection from $L$ to $L''$ extends to a morphism
$C^{\mathrm {\mathbf e}}L \to L''$
of differential graded Lie algebras and, in this way,
the pair $(A,C^{\mathrm {\mathbf e}}L )$ acquires
a differential graded Lie-Rinehart algebra structure
in such a way that
\[
(A,C^{\mathrm {\mathbf e}}L )\longrightarrow (A,L'' )
\]
is a morphism of differential graded Lie-Rinehart algebras.
We note that, by construction, the cone $C^{\mathrm {\mathbf e}}L$
contains the suspended object $sL'$ as a graded Lie ideal
but {\em not\/} as a differential graded Lie ideal since
the values of the differential, applied to elements of
$sL'$, lie in $L$.  
The notation $C^{\mathrm {\mathbf e}}$
is intended to indicate a cone construction, whence
the letter $C$, but relative the the extension $\mathbf e$,
whence the superscript.

The notion of $(A,C^{\mathrm {\mathbf e}}L)$-module is defined
in the obvious way and, playing formally the same game
as in Section \ref{lierine} above,
we introduce the
relative
\begin{equation}
\mathrm{Ext}_{(A,C^{\mathrm {\mathbf e}}L,L)}
\label{ext1}
\end{equation}
via the appropriate comonad as follows:

Let $\mathrm U_A[L]$ be the universal algebra associated with the
Lie-Rinehart algebra $(A,L)$ and
$\mathrm U_A[C^{\mathbf e} L]$
the universal differential graded algebra associated with the
differential graded Lie-Rinehart algebra $(A,C^{\mathbf e} L)$.
Consider the pair
$(\mathcal R,\mathcal S)=(\mathrm U_A[C^{\mathbf e} L], \mathrm U_A[L])$,
with $\mathcal R \supseteq \mathcal S$.
This pair gives rise to a  resolvent pair of 
categories, cf.
\cite{maclaboo} (IX.6), that is to say, the functor
\[
\mathcal F \colon \mathrm{Mod}_{\mathcal S} \longrightarrow  \mathrm{Mod}_{\mathcal R}
\] 
which assigns to the
right $\mathcal S$-module $N$ the induced  $\mathcal R$-module 
$\mathcal F(N)=N\otimes_{\mathcal R}\mathcal S$
is left adjoint to the forgetful functor $\square 
 \colon \mathrm{Mod}_{\mathcal R} \to  \mathrm{Mod}_{\mathcal S}$.
Relative (co)homology is then defined and
calculated in terms of a relatively projective 
resolution in the sense of  \cite{hochsone}.
Given the right $\mathcal R$-module $N$, the
{\em standard construction\/} $\mathbf L(N)$
arising from $N$ and the {\em comonad\/}
$(\mathcal L,\ccc,\delta)$
associated with the adjunction is a simplicial object
whose associated chain complex
$\big|\mathbf L(N)\big|$
coincides with the
standard relatively projective resolution of $N$ 
in the sense of  \cite{hochsone}.

Let  $\chain_{(A,L)}$ be the category of
chain complexes in the category of right $(A,L)$-modules and let
\[
\mathcal F \colon \chain_{(A,L)} \longrightarrow \mathrm{Mod}_{(A,C^{\mathbf e}L)}
\] 
be the functor which
assigns to the right $(A,L)$-chain complex $N$ the
totalized CCE complex 
\[
\mathcal FN = N \otimes_{(A,L)} \mathrm U_A[C^{\mathbf e} L] \cong
N \otimes_{(A,\tau_{L'})}(\Lambda_A')_{\partial}[sL']
\]
calculating the Lie algebra homology of the $A$-Lie algebra
$L'$ with coefficients
in $N$, viewed as an $(A,L')$-module
 (suitably interpreted relative to the chain complex structure on $N$).
Here $(\Lambda_A')_{\partial}[sL']$ refers to
CCE coalgebra of the $A$-Lie algebra $L'$, taken in the category of
$A$-modules, and the notation
$\,\cdot \, \otimes_{(A,\tau_{L'})}\,\cdot\,$
refers to the twisted tensor product relative to the universal twisting cochain
$\tau_{L'}\colon(\Lambda_A')_{\partial}[sL']\longrightarrow 
\mathrm U_A[L']$ in the category of $A$-modules.
Now, on the category of $(A,C^{\mathbf e}L)$-modules, the differential
functors
$\mathrm{Tor}^{(A,C^{\mathbf e} L, L)}$
and
$\mathrm{Ext}_{(A,C^{\mathbf e} L, L)}$ are defined:
Given the 
right  $(A,C^{\mathbf e}L)$-module $\mathbf W$ and the left
 $(A,C^{\mathbf e}L)$-module $\mathbf U$, the {\em relative differential 
$\mathrm{Tor}^{(A,C^{\mathbf e} L, L)}(\mathbf W,\mathbf U)$\/} 
is the homology of
the chain complex
\[
\big|\mathbf L(\mathbf W)\big|\otimes_{(A,C^{\mathbf e}L)} \mathbf U.
\]
Given the 
right  $(A,C^{\mathbf e}L)$-modules 
$\mathbf W$ and $\VV$, the {\em relative differential 
$\mathrm{Ext}_{(A,C^{\mathbf e} L, L)}(\mathbf W,\VV)$\/} is the homology of
the chain complex
\[
\mathrm{Hom}_{(A,C^{\mathbf e}L)}\left(
\big|\mathbf L(\mathbf W)\big|,\VV\right).
\]

\section{The cone on the Lie algebroid associated with a Lie groupoid}
\label{algebroids}

Let $\GGG$ be a Lie groupoid and
let $\tau_\GGG^{\tar}\colon \mathrm T^{\tar} \GGG \to \GGG$ 
be the tangent bundle of the foliation given by the $\tar$-fibers.
Recall that, as a smooth vector bundle,
the {\em Lie algebroid\/} $\lambda_\GGG\colon A\GGG \to \BBB$ {\em associated with\/} $\GGG$
is the pull back vector bundle $1^*\tau_\GGG^{\tar}$ across the
object inclusion map $1\colon \BBB \to \GGG$ of
$\tau_\GGG^{\tar}$.
Since the left-invariant vector fields on $\GGG$ are closed
under the Lie bracket, the space
$\Gamma(\lambda_\GGG)$ of sections is well known
to acquire a Lie bracket, the 
{\em Lie algebroid bracket\/}.
Furthermore, the {\em anchor\/}
$a_\GGG \colon  \lambda_\GGG \to \tau_{\BBB}$ is the vector bundle morphism
which maps each section $X$ of $\lambda_\GGG$ to the $\sou$-projection
of the corresponding left-invariant vector field.
The Lie algebroid associated with $\GGG$ is also the Atiyah algebroid
of an associated principal bundle
$\xi\colon P \to \BBB$
having $\GGG$ as its
gauge groupoid.
We will write the resulting Lie-Rinehart algebra as
\begin{equation}
\left(A_{\BBB}, L_\GGG\right)=\left(C^{\infty}(\BBB), \Gamma(\lambda_\GGG)\right) .
\label{lr1}
\end{equation}
The anchor $a_\GGG$  amounts to a morphism
of Lie-Rinehart algebras
from $\left(A_{\BBB}, L_\GGG\right)$ to
$\left(A_{\BBB}, \mathrm{Vect}(\BBB)\right)$.

Let  $\theta\colon E \to \BBB$ be a  
vector bundle. Recall that
a left $\GGG$-module structure
on $\theta$ induces a left
$\left(A_{\BBB}, L_{\GGG}\right)$-module structure
on the $A_{\BBB}$-module $\Gamma(\theta)$ of sections of $\theta$.
Indeed, let $G$ be a Lie group,
let $\xi\colon P \to \BBB$ be a principal right $G$-bundle
having $\GGG$ as its gauge groupoid
and such that
$\theta$ is the vector bundle
associated with $\xi$ and the left $G$-representation $V$.
Then $L_{\GGG}=\Gamma(\lambda_{\GGG})$ can be taken to be the space
of $G$-equivariant sections of the tangent bundle
$
\tau_p\colon \mathrm TP \longrightarrow P
$
of $P$, the space $\Gamma(\theta)$ of sections of $\theta$
arises as the space of smooth $G$-equivariant maps from $P$ to $V$,
and the $L_{\GGG}$-action on that space of maps is the obvious one
through $G$-equivariant vector fields on $P$.
See also
Theorem 4.1.6 in \cite{mackbtwo} (p.~152),
where this kind of structure is referred to as an {\em infinitesimal
action\/}.

Since the Lie algebroid $\lambda_{\GGG}$ of $\GGG$ is the Atiyah algebroid
of an associated principal $G$-
bundle $\xi\colon P \to \BBB$ on $\BBB$ (such that
$\GGG$ is the gauge groupoid of $\xi$), the corresponding extension
\eqref{2.2.2} of $(\mathbb R,A_{\BBB})$-Lie algebras arising from
the Atiyah sequence \eqref{2.2.1} of $\xi$
is available;
this extension takes the form
\begin{equation}
\mathbf e_{\xi}\colon 
0 \longrightarrow \fra g(\xi) \longrightarrow \mathrm E(\xi)
\longrightarrow \mathrm{Vect}(\BBB) \longrightarrow 0. \label{2.2.2.2}
\end{equation}
Now the cone $C^{\mathbf e_{\xi}}L_{\GGG}$ introduced in Section \ref{ext}
is defined, and
$(A_{\BBB},C^{\mathbf e_{\xi}}L_{\GGG})$ is a differential graded Lie-Rinehart
algebra. We will henceforth write
$C^{\GGG}L_{\GGG}$ rather than $C^{\mathbf e_{\xi}}L_{\GGG}$.
Let
${ }_{\left(A_{\BBB},\GGG, C^{\GGG} L_{\GGG}\right)}\mathrm{Mod}$ 
be the category
of left $(A_{\BBB},\GGG, C^{\GGG} L_{\GGG})$-modules
where the three actions intertwine in a way which we now make precise.

We define a  {\em left\/} $(A_{\BBB},\GGG, C^{\GGG} L_{\GGG})$-{\em module\/}
to be a differential graded left $\GGG$-module $\zeta$
together with a differential graded
left $(A_{\BBB},C^{\GGG}L_{\GGG})$-module structure on the
space $\Gamma(\zeta)$ of sections of the underlying differential graded
vector bundle $\zeta\colon E \to \BBB$;
the two structures are required to be related by (i) and (ii) below:
\newline
\noindent
(i) The 
left $(A_{\BBB},L_{\GGG})$-module structure on 
$\Gamma(\zeta)$ 
induced from the left $\GGG$-structure coincides with
the restriction to $(A_{\BBB},L_{\GGG})$
of the given
left $(A_{\BBB},C^{\GGG}L_{\GGG})$-module structure.
\newline
\noindent
(ii) Relative to the decomposition 
\begin{equation}
C^{\GGG}L_{\GGG}=(s \fra g(\xi))\rtimes L_{\GGG},
\label{decomp}
\end{equation}
the action of the graded $A_{\BBB}$-Lie algebra 
$s \fra g(\xi)$ 
on $\Gamma(\zeta)$ 
is compatible with the left $\GGG$-module structure.

We will now explain the compatibility constraint (ii).
We  refer to a bundle automorphism
of $\xi$ which induces the {\em identity\/} on the base as a
{\em gauge transformation\/}.
Let $\mathcal G(\xi)$ be the group of gauge transformations of $\xi$.
We identify  $\mathcal G(\xi)$ with the group $\mathrm{Map}_G(P,G)$
of $G$-equivariant maps where $G$ acts on itself from the right
by conjugation, the group structure being given by pointwise composition.
The identification is made explicit by the association
\[
\mathrm{Map}_G(P,G)\longrightarrow \mathcal G(\xi),
\ 
\varphi \longmapsto \Phi\colon P \longrightarrow P,
\ \Phi(q) = q \varphi(q),\ q \in P.
\]
Thus $\mathcal G(\xi)$ arises as the space of sections of the associated
fiber bundle
\[
C(\xi)\colon P\times_CG \longrightarrow \BB
\]
where $C\colon G \times G \to G$ is the conjugation action 
$(a,b)\mapsto aba^{-1}$. 
In the language of bisections,  $\mathcal G(\xi)$ 
is the group of bisections of $\GGG$
which take values in the inner automorphism group bundle $C(\xi)$,
cf. Example 1.4.7 on p.~25 of \cite{mackbtwo}.
In particular, a vector field on $P$ whose
flow generates a 1-parameter subgroup of $\mathcal G(\xi)$ is well known
to be identifiable
with a section of the adjoint bundle
$\mathrm{ad}(\xi)$ on $\BBB$.
In this sense, the Lie algebra
$\Gamma(\mathrm{ad}(\xi))$ is the Lie algebra of the group
$\mathcal G(\xi)$.

The group $\mathcal G(\xi)$ acts  from the left on 
any vector bundle $\theta\colon P \times_G V \to \BB$ associated with $\xi$
and the
left $G$-representation $V$ in an obvious manner. 
In the case where the base of $\xi$ is 
a point, 
this action
comes down to the left $G$-structure
on $V$. For general $\xi$, the naturally induced $\mathcal G(\xi)$-action 
\[
C^{\infty}(P,V)^G \times \mathcal G(\xi) \longrightarrow C^{\infty}(P,V)^G
\]
on the space $\Gamma(\theta)\cong C^{\infty}(P,V)^G$ of sections
of $\theta$ is from the right and given by the
assignment to  $(\beta,\alpha) \in C^{\infty}(P,V)^G \times \mathcal G(\xi)$
of the composite $\beta\circ \alpha \in C^{\infty}(P,V)^G$.
We will henceforth argue in terms of the associated 
{\em left\/} $\mathcal G(\xi)$-action 
\begin{equation}
\mathcal G(\xi) \times \Gamma(\theta)  \longrightarrow \Gamma(\theta),
\ (\alpha,\beta)
\longmapsto \alpha(\beta)=\beta\circ \alpha^{-1} .
\label{left}
\end{equation}
In the special case where the base of $\xi$ is 
a point, this $\mathcal G(\xi)$-action comes down to the left $G$-structure
on $V$.

Let $\zeta$ be a differential graded
left $\GGG$-module endowed as well with 
a left $(A_{\BBB},C^{\GGG}L_{\GGG})$-module structure on the
space $\Gamma(\zeta)$ of sections of the underlying differential graded
vector bundle $\zeta\colon E \to \BBB$.
Relative to the decomposition \eqref{decomp},
we write the action with $Y \in L_{\BBB}$ as an operation
\[
\lambda_Y\colon \Gamma(\zeta) \longrightarrow \Gamma(\zeta)
\]
of Lie derivative and 
the action with $v=sY \in s\fra g(\xi)$ as an operation
\[
i_Y\colon \Gamma(\zeta) \longrightarrow \Gamma(\zeta)
\]
of contraction. 
Suppose  that these data satisfy the requirement (i) above. 
Given the section $Y$ of $\mathrm{ad}(\xi)$ (element $Y$ of $\fra g(\xi)$) 
and the gauge transformation $\alpha$ of $\xi$
(element $\alpha$ of $\mathcal G(\xi)$),
the requirement (i) implies that
\[
\alpha \lambda_Y =\alpha  \lambda_Y \alpha^{-1}\alpha 
=\lambda_{\mathrm{Ad}_{\alpha}(Y)}\alpha \colon
\Gamma(\zeta) \longrightarrow \Gamma(\zeta).
\]
Accordingly, the compatibility requirement (ii) takes the form
\[
\alpha i_Y =i_{\mathrm{Ad}_{\alpha}(Y)}\alpha\colon 
\Gamma(\zeta) \longrightarrow \Gamma(\zeta),
\]
where $\alpha$ ranges over gauge transformations and $Y$
over sections of $\mathrm{ad}(\xi)$.

The cone $(A_{\BBB}, C^{\GGG} L_{\GGG})$ on
$(A_{\BBB}, L_{\GGG})$
in the category of differential graded Lie-Rinehart algebras
has an obvious underlying geometric object:
Relative to the principal bundle $\xi\colon P \to \BBB$, consider the trivial
graded vector bundle $P \times s \fra g \to P$ 
whose fibers $\{q\}\times s\fra g$ ($q \in P$) are concentrated in degree 1,
as indicated by the notation $s \fra g$
where $s$ refers to the suspension, and introduce the 
{\em cone\/} on $\lambda_{\GGG}$ as the
differential graded
Lie algebroid which, as a graded vector bundle, is the Whitney sum
\[
((\mathrm T P)\big/G) \oplus P \times_G s \fra g \longrightarrow \BBB
\]
of vector bundles on $\BBB$,
the Lie algebroid structure corresponding exactly to the
cone $(A_{\BBB}, C^{\GGG} L_{\GGG})$
in the category of differential graded Lie-Rinehart algebras.
In particular, the anchor is induced by the composite
\[
((\mathrm T P)\big/G) \oplus P \times_G s \fra g 
\longrightarrow (\mathrm T P)\big/G
\longrightarrow \mathrm T \BBB
\]
of the projection to
$(\mathrm T P)\big/G$ with the anchor of the Lie algebroid $\lambda_{\GGG}$
of $\GGG$.
We refer to the resulting differential graded Lie algebroid
as the {\em cone\/} on $\lambda_{\GGG}$ in the category of differential
graded Lie algebroids.

\section{Equivariant de Rham cohomology over a Lie \\groupoid}
\label{equivgroupoid}
For intelligibility
we begin with recalling the monadic description
of ordinary equivariant de Rham cohomology developed
in our paper \cite{koszultw}.

Let $G$ be an ordinary Lie group, viewed as
a left $G$-manifold via left translation.
Given the chain complex $V$,
the bicomplex $(\mathcal A^*(G,V_*),\delta, d)$ is defined,
where $\delta$ refers to the de Rham complex operator and $d$ to 
the differential induced by the differential of $V$;
let $\mathcal A(G,V)$, the $V$-{\em valued (totalized)
de Rham complex of $G$\/}, be the 
chain complex arising from $(\mathcal A^*(G,V_*),\delta, d)$ by totalization.
The notion of $(G,C\fra g)$-module
and the category $\mathrm{Mod}_{(G,C \fra g)}$
of right  $(G,C\fra g)$-modules have been introduced in 
\cite{koszultw}. 
Endow $\mathcal A(G,V)$
with the right $(G,C\fra g)$-module structure
coming from the derivative of the $G$-action
on itself and the operations of contraction and Lie derivative.
See \cite{koszultw} for details.
Let $\chain$ be the category of real chain complexes.
Consider the {\em pair of categories\/}
\[
\left(\mathcal M,\Cat\right)=
\left(\mathrm{Mod}_{(G,C \fra g)},\chain\right)
\]
and let
\[
\mathcal G_{(G,C \fra g)} \colon \chain \longrightarrow  \mathrm{Mod}_{(G,C\fra g)}
\]
be the functor
which assigns to the chain complex 
$V$ the right $(G,C\fra g)$-module
\[
\mathcal G_{(G,C \fra g)}V= \mathcal A(G,V).
\]
In view of an observation in \cite{koszultw},
the functor $\mathcal G_{(G,C \fra g)}$
is right adjoint to the forgetful functor $\square 
\colon \mathrm{Mod}_{(G,C\fra g)}\to  \chain$
and hence defines a monad  $(\mathcal T,\uuu,\mu)$  over the category 
$\mathrm{Mod}_{(G,C\fra g)}$.
In degree zero, this construction comes down to the monad
spelled out in Example \ref{ex2} above.

Let $\VV$ be  a right $(G,C\fra g)$-module. 
The chain complex $\left|\mathbf T(\VV)\right|$ arising from the
{\em dual standard construction\/}
$\mathbf T(\VV)$ associated with 
the monad $(\mathcal T, \uuu,\mu)$ and 
the right $(G,C\fra g)$-module
$\VV$
is a resolution of $\VV$
in the category of right
$(G,C\fra g)$-modules
that is {\em injective relative to the category of chain complexes\/}.
Given a right  
$(G,C\fra g)$-module $\mathbf W$, by definition,
the {\em differential graded\/}
$\mathrm{Ext}_{((G,C \fra g);\chain)}(\mathbf W,\VV)$ is the homology of
the chain complex
\[
\mathrm{Hom}_{{(G,C\fra g)}}\left(\mathbf W,\left|\mathbf T(\VV)\right|\right).
\]
In particular, relative to the obvious 
trivial $(G,C \fra g)$-module structure
on $\mathbb R$, the  differential graded
$
\mathrm{Ext}_{((G,C \fra g);\chain)}(\mathbb R,\VV)
$ 
is the homology of
the chain complex
\[
\mathrm{Hom}_{(G,C \fra g)}\left(\mathbb R,\left|\mathbf T(\VV)\right|\right)\cong
\left|\mathbf T(\VV)\right|^{(G,C\fra g)}
\]
of $(G,C\fra g)$-invariants in $\left|\mathbf T(\VV)\right|$.

Let $X$ be a left $G$-manifold. The de Rham complex $\mathcal A(X)$
acquires a right $(G,C \fra g)$-module structure
in an obvious manner via the derivative of the
$G$-action and the operations of contraction and Lie derivative, cf.
\cite{koszultw}.
One of the main results of \cite{koszultw} says that
the $G$-equivariant de Rham cohomology of
$X$ is canonically isomorphic to the 
 differential
$\mathrm{Ext}_{((G,C \fra g);\chain)}(\mathbb R,\mathcal A(X))$. 
However, for reasons explained in Remark \ref{cd5},
the functor
$\mathcal G_{(G,C \fra g)}$
does not extend to Lie groupoids
since it does not even extend in degree zero,
whence the
monad  $(\mathcal T,\uuu,\mu)$  over the category 
$\mathrm{Mod}_{(G,C\fra g)}$
cannot extend to a monad defined suitably in terms of groupoids.
The cure is provided by an extension of the monads spelled out
in Examples \ref{ex4} and \ref{ex9}.

Thus let 
$\mathcal U\colon \mathrm{Mod}_{(G,C\fra g)}
\longrightarrow \mathrm{Mod}_{(G,C\fra g)}$
be the functor which assigns to the right $(G,C\fra g)$-module $\VV$ the
right $(G,C\fra g)$-module $\mathcal U(\VV)=\mathcal A(G,\VV)$,
the $(G,C\fra g)$-module structure being
the right {\em diagonal\/} 
$(G,C\fra g)$-module structure given in \cite{koszultw}.
This structure extends the 
action 
\eqref{diag222} coming from left translation in $G$ and
the right  $(G,C\fra g)$-module structure on $\VV$.
Let $\omega$ be the natural transformation given by
the assignment to the right $(G,C\fra g)$-module $\VV$ of
\begin{equation}
\omega=\omega_{\VV}\colon \VV \longrightarrow \mathcal A^0(G, \VV),
\ v \longmapsto \omega_v:G \to \VV,\ \omega_v(x) =v, \ v \in \VV, x
\in G,
\label{2241}
\end{equation}
and let
\begin{equation}
\nu\colon \mathcal U^2 \longrightarrow \mathcal U
\label{nuu}
\end{equation}
be 
the natural transformation
given by the assignment to
the right $(G,C\fra g)$-module $\VV$ of the association
\[
\nu_{\VV} 
\colon \mathcal A(G,\mathcal A(G,\VV))\cong  \mathcal A(G\times G, \VV)
\longrightarrow \mathcal A(G,\VV)
\] 
induced by the diagonal map $G \to G\times G$.
The system $(\mathcal U,\omega, \nu)$
is a monad over the category $\mathrm{Mod}_{(G,C\fra g)}$.
The
{\em dual standard construction\/}
$\mathbf U(\VV)$ associated with 
the monad $(\mathcal U,\omega, \nu)$ and 
the right $(G,C\fra g)$-module
$\VV$ together with $\omega_{\VV}\colon \VV \to \mathbf U(\VV)$
yields the resolution 
\begin{equation}
|\mathbf U(\VV)|
\label{reso2}
\end{equation}
of $\VV$
in the category of right
$(G,C\fra g)$-modules
that is {\em injective relative to the category of chain complexes\/}
and hence defines 
the differential
$\mathrm{Ext}_{((G,C \fra g);\chain)}$ as well.
In particular, given the left $G$-manifold $X$, when we substiotute
for $\VV$ the
de Rham complex $\mathcal A(X)$, we again
obtain the $G$-equivariant de Rham cohomology
$\mathrm H_{G}(X)$
of $X$ 
as $\mathrm{Ext}_{((G,C \fra g);\chain)}(\mathbb R,\mathcal A(X))$. 

Guided by the construction in Example \ref{ex9} above,
we now switch from right $(G,C\fra g)$-modules,
introduced in \cite{koszultw},
to left $(G,C\fra g)$-modules.
Let $\VV$ be a chain complex, endowed with a
left $G$-chain complex structure and with a left
$(C\fra g)$-module structure.
We write the action with $Y \in \fra g$ as a Lie derivative 
$\lambda_Y\colon \VV \to \VV$
and the action with $sY\in s\fra g$ as a contraction $i_Y\colon \VV \to \VV$.
We remind the reader that, as a graded Lie algebra,
$C\fra g = (s\fra g)\rtimes \fra g$; cf. the beginning of Section
\ref{lierine} above.
The $G$-and $(C\fra g)$-module structures are said to combine to
a {\em left\/} $(G,C \fra g)$-module structure when
the left $\fra g$-action is the derivative of the left $G$-action
and when
\[
x(i_Y(v))= (i_{\mathrm{Ad}_xY}(xv)),\ x \in G,\ Y \in \fra g,\ v \in \VV.
\]
We note that, when $\VV$ is a  left  $(G,C \fra g)$-module, plainly
\[
x(\lambda_Y(v))= (\lambda_{\mathrm{Ad}_xY}(xv)),\ x \in G,\ Y \in \fra g,\ v \in \VV.
\]
The crucial example of a left  $(G,C \fra g)$-module
is the de Rham complex $\mathcal A(Q)$ of a smooth manifold
$Q$ acted upon from the right by $G$,
the left $(C\fra g)$-module structure on  $\mathcal A(Q)$
being given by the operations of contraction and Lie derivative.
More generally, given the right $G$-manifold $Q$ and the left
$(G,C \fra g)$-module $\VV$, the 
$\VV$-valued de Rham complex $\mathcal A(Q,\VV)$
acquires a diagonal left $(G,C \fra g)$-module structure.
For the case of right $(G,C\fra g)$-modules
rather than left $(G,C\fra g)$-modules, the diagonal structure
is given in \cite{koszultw}.
With the obvious notion of morphism, 
left $(G,C \fra g)$-modules constitute a category 
${}_{(G,C\fra g)}\mathrm{Mod}$.

We now extend the monad  $(\mathcal U,\omega, \nu)$
over the category $\mathrm{Mod}_{(G,C\fra g)}$
reproduced above, but over the category 
${}_{(G,C\fra g)}\mathrm{Mod}$:
Let $\xi \colon P \to B$ be a principal right $G$-bundle.
Further, let
\[
\mathcal U_{\xi}\colon {}_{(G,C\fra g)}\mathrm{Mod}
\longrightarrow {}_{(G,C\fra g)}\mathrm{Mod}
\]
be the functor which assigns to the left $(G,C\fra g)$-module $\VV$ the
left $(G,C\fra g)$-module 
\[
\mathcal U_{\xi}(\VV)=\mathcal A(P,\VV),
\]
the $(G,C\fra g)$-module structure on $\mathcal A(P,\VV)$ being
the left {\em diagonal\/} 
$(G,C\fra g)$-module structure
coming from the right action of $G$ on $P$ and
the left $(G,C\fra g)$-module structure on $\VV$.
This left diagonal $(G,C\fra g)$-module structure 
on $\mathcal A(P,\VV)$ extends the 
action 
\eqref{diag2222}.
Let $\omega$ be the natural transformation given by
the assignment to the left $(G,C\fra g)$-module $\VV$ of
\begin{equation}
\omega=\omega_{\VV}\colon \VV \longrightarrow \mathcal A^0(P, \VV),
\ v \longmapsto \omega_v:P \to \VV,\ \omega_v(y) =v, \ v \in \VV, y
\in P,
\label{22241}
\end{equation}
and let
\begin{equation}
\nu\colon \mathcal U_{\xi}^2 \longrightarrow \mathcal U_{\xi}
\label{nnuu}
\end{equation}
be 
the natural transformation
given by the assignment to
the left $(G,C\fra g)$-module $\VV$ of the association
\[
\nu_{\VV} 
\colon \mathcal A(P,\mathcal A(P,\VV))\cong  \mathcal A(P\times P, \VV)
\longrightarrow \mathcal A(P,\VV)
\] 
induced by the diagonal map $P \to P\times P$ of $P$.
The system $(\mathcal U_{\xi},\omega, \nu)$
is a monad over the category ${}_{(G,C\fra g)}\mathrm{Mod}$.
Then the
{\em dual standard construction\/}
$\mathbf U_{\xi}(\VV)$ associated with 
the monad $(\mathcal U_{\xi},\omega, \nu)$ and 
the left $(G,C\fra g)$-module
$\VV$ together with $\omega_{\VV}\colon \VV \to \mathbf U_{\xi}(\VV)$
yields the resolution 
\begin{equation}
|\mathbf U_{\xi}(\VV)|
\label{reso22}
\end{equation}
of $\VV$
in the category of left
$(G,C\fra g)$-modules
that is {\em injective relative to the category of chain complexes\/}.
Indeed, $EP/G$ is as well a classifying space for $G$.
More precisely, the injection of $G$ into $P$ as a specific fiber
induces an injection $EG \to EP$ of right simplicial $G$-manifolds;
in fact, $EG$ being a simplicial Lie 
group, the projection $E\to B$ induces
the principal simplicial $EG$-bundle $EP \to EB$.
This simplicial $EG$-bundle, in turn, induces the simplicial fiber bundle
$EP\big/G \to EB$ having fiber $BG=EG/G$. However, since $EB$ is contractible,
the injection $EG/G \to EP/G$ is a homotopy equivalence.
Consequently  $EP/G$ is a classifying space for $G$ as well, whence
\eqref{reso22}
is indeed a resolution of $\VV$
in the category of left
$(G,C\fra g)$-modules
that is injective relative to the category of chain complexes.

In particular, after a choice of base point of $B$ has been made,
the obvious restriction map from 
$|\mathbf U_{\xi}(\VV)|$ to $|\mathbf U(\VV)|$
where  $|\mathbf U(\VV)|$ is a resolution of the kind \eqref{reso2}
but constructed for left $(G,C\fra g)$-modules rather than right ones 
is a comparison map of resolutions.
Hence the resolution 
\eqref{reso22}
defines 
the differential
$\mathrm{Ext}_{((G,C \fra g);\chain)}$ as well.
In particular, given the left $G$-manifold $X$, when we take $\VV$ to be the
de Rham complex $\mathcal A(X)$, 
endowed with the induced left $(G, C\fra g)$-module structure,
the $G$-equivariant de Rham cohomology
$\mathrm H_{G}(X)$
of $X$ 
results as the homology of
\[
\mathrm{Hom}_{(G,C \fra g)}(\mathbb R,|\mathbf U_{\xi}(\mathcal A(X))|).
\] 

Let now $\GGG$ be a locally trivial
Lie groupoid and let $f \colon M \to \BBB$ be a 
left $\GGG$-manifold; thus $f$ is endowed
with a left $\GGG$-action of the kind \eqref{action11}.
Our aim is to give a meaning to
the $\GGG$-{\em equivariant de Rham cohomology\/} $\mathrm H_{\GGG}(f)$ of $f$.
Elaborating somewhat on an observation of Seda's \cite{sedaone}
 quoted before,
we shall concoct a 
functor $\mathcal A(\GGG,\,\cdot \,)$
which assigns to the left 
$(A_{\BBB},\GGG, C^{\GGG} L_{\GGG})$-module $\zeta$---this is a vector bundle
$\zeta\colon E \to \BBB$ with additional structure---a left 
$(A_{\BBB},\GGG, C^{\GGG} L_{\GGG})$-module 
$\mathcal A(\GGG,\zeta)$,
 similar to the 
vector bundle $F(\GGG,\vartheta)$ on $\BBB$ associated
in Section \ref{liegroupoids} above with the left $\GGG$-module $\vartheta$;
however, as a vector bundle on $\BBB$,
over the object $q$ of $\BBB$, the fiber will be
the de Rham complex
$\mathcal A(\GGG^q,E_q)$ of $\GGG^q$ 
having as coefficients the fiber $E_q=\zeta^{-1}(q)$
rather than just the $\vartheta^{-1}(q)$-valued functions
on $\GGG^q$ as in Example \ref{ex9}. 
Substituting for $\zeta$ the appropriate vector bundle
arising from $f$ in a way to be made precise below, we will eventually arrive
at the $\GGG$-equivariant de Rham cohomology $\mathrm H_{\GGG}(f)$ of $f$.

Given the left 
$(A_{\BBB},\GGG, C^{\GGG} L_{\GGG})$-module 
$\zeta \colon E \to \BBB$, 
let
\begin{equation}
\mathcal A(\GGG,\zeta)\colon \mathcal A(\GGG,E)\longrightarrow \BBB
\label{U3}
\end{equation}
be the 
$(A_{\BBB},\GGG, C^{\GGG} L_{\GGG})$-module 
whose underlying vector bundle decomposes, for any object
$q \in \BBB$,
as the induced vector bundle
having 
\[
\GGG^q \times_{\GGG^q_q}\mathcal A(\GGG^q,\zeta^{-1}(q))
\]
as total space; thus, once a choice of  $q \in \BBB$ has been made,
as a vector bundle,
$\mathcal A(\GGG,\zeta)$ amounts to the vector bundle associated with the
principal right $\GGG^q_q$-bundle $\tar \colon\GGG^q \to \BBB$
and the left diagonal $\GGG^q_q$-representation 
on $\mathcal A(\GGG^q,\zeta^{-1}(q))$ induced from the right
$\GGG^q_q$-action on $\GGG^q$ and the 
left $\GGG^q_q$-action on $\zeta^{-1}(q)$.
In Proposition \ref{unravel1} above,
we have spelled out this kind of decomposition
for the smooth functions functor $\mathcal A^0$ with appropriate values
rather than for the entire de Rham functor $\mathcal A$.
Being, as a vector bundle, associated with the
principal bundle $\tar \colon \GGG^q \to \BBB$,
the vector bundle $\mathcal A(\GGG,\zeta)$ acquires
a left $\GGG$-module structure in the standard way, and the operations
of contraction and Lie derivative 
relative to the fundamental vector fields 
coming from the $\GGG^q_q$-action
turn
$\mathcal A(\GGG,\zeta)$ into a left
$(A_{\BBB},\GGG, C^{\GGG} L_{\GGG})$-module.
This module structure extends 
the left diagonal $G$-structure  \eqref{diag2222}
and the left diagonal $(G,C\fra g)$-structure 
on a de Rham complex $\mathcal A(Q,\VV)$
of a smooth right $G$-manifold
mentioned earlier.

Let $\chainB$ be the category of {\em split topological chain complexes
over\/} $\BBB$. Thus an object of $\chainB$ is a differential graded 
topological vector bundle on $\BBB$ such that each differential, that is, 
operator (morphism of topological vector bundles) of the kind 
$d \colon \zeta_k \to \zeta_{k-1}$ splits (i.~e. admits a retraction),
and the morphisms in $\chainB$ are the split morphisms of
split topological chain complexes
over $\BBB$.

The assignment to
the left 
$(A_{\BBB},\GGG, C^{\GGG} L_{\GGG})$-module 
$\zeta$
of the vector bundle
\begin{equation}
\mathcal U(\zeta)=
\mathcal A(\GGG,\zeta)\colon \mathcal A(\GGG,E)\longrightarrow \BBB,
\label{U4}
\end{equation} 
cf. \eqref{U3} above,
endowed with the
left 
$(A_{\BBB},\GGG, C^{\GGG} L_{\GGG})$-module structure 
explained above
is an endofunctor 
\begin{equation}
\mathcal U\colon 
{ }_{\left(A_{\BBB},\GGG, C^{\GGG} L_{\GGG}\right)}\mathrm{Mod}
\longrightarrow 
{ }_{\left(A_{\BBB},\GGG, C^{\GGG} L_{\GGG}\right)}\mathrm{Mod}
\label{eqgroupoidco}
\end{equation}
on ${ }_{\left(A_{\BBB},\GGG, C^{\GGG} L_{\GGG}\right)}\mathrm{Mod}$.
Likewise the construction \eqref{2241}
of the natural transformation $\omega$
and the construction \eqref{nuu}
of the natural transformation $\nu$
extend, and we obtain a monad
$(\mathcal U,\omega, \nu)$
over the category 
${ }_{\left(A_{\BBB},\GGG, C^{\GGG} L_{\GGG}\right)}\mathrm{Mod}$.
Then the
{\em dual standard construction\/}
$\mathbf U(\zeta)$ associated with 
the monad $(\mathcal U,\omega, \nu)$ and 
the left $\left(A_{\BBB},\GGG, C^{\GGG} L_{\GGG}\right)$-module
$\zeta$, together with $\omega_{\zeta}$,
yields the resolution $|\mathbf U(\zeta)|$ of $\zeta$
in the category of left
$\left(A_{\BBB},\GGG, C^{\GGG} L_{\GGG}\right)$-modules
that is {\em injective relative to the category\/} $\chainB$
and hence defines 
the differential
$\mathrm{Ext}_{\left(\left(A_{\BBB},\GGG, C^{\GGG} L_{\GGG}\right);\chainB\right)}$.
Indeed, the chain complex 
\begin{equation}
\left|\mathbf U(\zeta)\right| 
\label{reso3}
\end{equation}
arising from the
{\em dual standard construction\/}
$\mathbf U(\zeta)$ associated with $\zeta$
together with 
\[
\omega_{\zeta}\colon \zeta \longrightarrow \left|\mathbf U(\zeta)\right| 
\]
is an injective resolution of $\zeta$
in the category of left
$(A_{\BBB},\GGG,C^{\GGG} L_\GGG)$-modules
relative to the category $\chainB$.
Given the left
$(A_{\BBB},\GGG,C^{\GGG} L_\GGG)$-module $\eta$, the {\em differential graded\/}
$\mathrm{Ext}_{\left(\left(A_{\BBB},\GGG, C^{\GGG} L_{\GGG}\right);\chainB\right)}(\eta,\zeta)$
is the homology of the chain complex
\[
\mathrm{Hom}_{{(A_{\BBB},\GGG,C^{\GGG} L_\GGG)}}
\left(\eta,\left|\mathbf U(\zeta)\right|\right).
\]

Similarly as in Section \ref{liegroupoids} above,
we now unravel  
the present functor $\mathcal U$.
Let $\xi\colon P \to \BBB$ be a principal right $G$-bundle
whose gauge groupoid is isomorphic to $\GGG$, and
suppose that the vector bundle $\zeta$ 
that underlies the 
$(A_{\BBB},\GGG,C^{\GGG} L_\GGG)$-module under consideration
is associated with $\xi$.
Relative to $\xi$, as a topological vector bundle,
$\zeta$ then amounts to an induced vector bundle of the kind
\[
P\times_G \VV \longrightarrow \BBB,
\] 
for a suitable $(G,C\fra g)$-module $\VV$, 
each constituent of the underlying graded
vector space being suitably topologized.
Then, relative to $\xi$, as a topological vector bundle,
$\mathcal U(\zeta)$
amounts to an induced vector bundle of the kind
\begin{equation}
P\times_G \mathcal A(G,\VV) \longrightarrow \BBB,
\label{induced}
\end{equation} 
each constituent of the $\VV$-valued de Rham complex
$\mathcal A(G,\VV)$ of $G$ being suitably topologized
and turned into a left $(G,C\fra g)$-module via the diagonal action.

Let $\mathcal A_f\colon \mathcal A_f(M) \to \BBB$
be the vector bundle over $\BBB$ having as fiber
$(\mathcal A_f)^{-1}(q)$
over any $q \in \BBB$ 
the ordinary de Rham complex
$(\mathcal A_f)^{-1}(q)=\mathcal A(F_q)$
of the fiber $F_q=f^{-1}(q)$.
As a topological vector bundle,
once a choice of $q \in \BBB$ has been made,
$\mathcal A_f$ plainly amounts to an induced vector bundle of the kind
$\GGG^q\times_{\GGG^q_q} \mathcal A(F_q) \longrightarrow \BBB$.
The left action of $\GGG$ on $f$ induces a left $\GGG$-module structure
on $\mathcal A_f$, and
the operations
of contraction and Lie derivative 
relative to the fundamental vector fields 
coming from the $\GGG^q_q$-action
turn $\mathcal A_f$ 
into a left
$(A_{\BBB},\GGG, C^{\GGG} L_{\GGG})$-module.

We now take the left $\left(A_{\BBB},\GGG, C^{\GGG} L_{\GGG}\right)$-module 
$\zeta$ to be the vector bundle $\mathcal A_f$ and, furthermore, we take
$\eta$ to be
the trivial real line bundle on $\BBB$, 
endowed with the trivial left
$\left(A_{\BBB},\GGG, C^{\GGG} L_{\GGG}\right)$-module structure.
Then
the chain complex
\[
\mathrm{Hom}_{\left(A_{\BBB},\GGG, C^{\GGG} L_{\GGG}\right)}(\eta, |\mathbf U(\zeta)|)
\]
defines the relative 
$\mathrm{Ext}_{\left(\left(A_{\BBB},\GGG,C^{\GGG} L_{\GGG}\right);\chainB\right)}
(\eta,\zeta)$
which, in turn,
we take as the definition of the 
$\GGG$-{\em equivariant de Rham cohomology\/} $\mathrm H_{\GGG}(f)$ of $f$.

\begin{Theorem}
\label{U6}
For any object $q \in \BBB$, restriction induces an isomorphism
of the 
$\GGG$-equivariant de Rham cohomology $\mathrm H_{\GGG}(f)$ of $f$
onto the ordinary $\GGG^q_q$-equivariant de Rham cohomology 
$\mathrm H_{\GGG^q_q}(F_q)$ of the fiber $F_q=f^{-1}(q)$.
\end{Theorem}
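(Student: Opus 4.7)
The plan is to parallel the proof of Proposition \ref{prop9}, upgraded to track the cone Lie algebroid actions. Pick a vertex $q \in \BBB$, set $G = \GGG^q_q$ with Lie algebra $\fra g$, and let $\xi = \tar\colon \GGG^q \to \BBB$, the principal right $G$-bundle whose gauge groupoid is $\GGG$. Since $f\colon M \to \BBB$ is a left $\GGG$-manifold and $\GGG^q_q$ fixes $q$, the standard theory of associated fiber bundles identifies $M$ with $\GGG^q \times_G F_q$, where $F_q = f^{-1}(q)$ carries the restricted left $G$-action. Consequently the left $(A_{\BBB},\GGG,C^{\GGG}L_{\GGG})$-module $\mathcal A_f$ amounts to the vector bundle $\GGG^q \times_G \mathcal A(F_q) \to \BBB$ associated with $\xi$ and the diagonal left $(G, C\fra g)$-module structure on the de Rham complex $\mathcal A(F_q)$, the $(C\fra g)$-structure on $\mathcal A(F_q)$ being the standard one given by contraction and Lie derivative along fundamental vector fields of the $G$-action on $F_q$.

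Next, I would apply the unravelling spelled out in Section \ref{equivgroupoid} to identify $|\mathbf U(\mathcal A_f)|$, as a differential graded topological vector bundle on $\BBB$, with the associated bundle $\GGG^q \times_G |\mathbf U_{\xi}(\mathcal A(F_q))|$, where $|\mathbf U_{\xi}(\mathcal A(F_q))|$ is the resolution \eqref{reso22} of $\mathcal A(F_q)$ in left $(G,C\fra g)$-modules relative to the category of chain complexes. The decomposition $C^{\GGG}L_{\GGG} = (s\fra g(\xi)) \rtimes L_{\GGG}$ of \eqref{decomp} is compatible with this identification: the gauge Lie algebra $\fra g(\xi) = \Gamma(\mathrm{ad}(\xi))$ corresponds fiberwise over $q$ to the $s\fra g$-summand of $C\fra g$ acting on $\mathcal A(F_q)$ by contraction, while the $L_{\GGG}$-summand corresponds to the infinitesimal $\GGG$-action.

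Finally, I would extend Seda's adjunction used in the proof of Proposition \ref{prop9} to the present enriched setting: the associated-bundle functor $V \mapsto \GGG^q \times_G V$ from left $(G,C\fra g)$-modules to left $(A_{\BBB},\GGG,C^{\GGG}L_{\GGG})$-modules is left adjoint to the restriction functor which assigns to a left $(A_{\BBB},\GGG,C^{\GGG}L_{\GGG})$-module its vertex fiber with its induced $(G,C\fra g)$-module structure. Applying $\mathrm{Hom}_{(A_{\BBB},\GGG,C^{\GGG}L_{\GGG})}(\eta,\,\cdot\,)$ with $\eta$ the trivial real line bundle then yields the restriction chain isomorphism
\[
\mathrm{Hom}_{(A_{\BBB},\GGG,C^{\GGG}L_{\GGG})}(\eta,|\mathbf U(\mathcal A_f)|)
\cong
\mathrm{Hom}_{(G,C\fra g)}(\mathbb R,|\mathbf U_{\xi}(\mathcal A(F_q))|),
\]
whose target computes $\mathrm H_G(F_q) = \mathrm H_{\GGG^q_q}(F_q)$ by the monadic description of ordinary equivariant de Rham cohomology recalled at the start of Section \ref{equivgroupoid}. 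Passing to homology yields the asserted isomorphism.

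The main obstacle is verifying the enriched Seda adjunction, namely that the bijection between $\GGG$-equivariant maps of associated bundles and $G$-equivariant maps of fibers is compatible not merely with the groupoid action but also with the full $C^{\GGG}L_{\GGG}$-action on one side and $C\fra g$-action on the other. One must verify in particular that contraction by a section of $\mathrm{ad}(\xi)$, constrained by the identity \eqref{CL} against the $A_{\BBB}$-module structure, restricts fiberwise over $q$ to contraction by the corresponding element of $s\fra g \subset C\fra g$ on $\mathcal A(F_q)$; this is where the triviality of the $A_{\BBB}$-action on the coefficient module $\eta$ is essential, since it eliminates the $da \cup i_{\aalpha}$ correction term in \eqref{CL} and so makes the two notions of invariants correspond.
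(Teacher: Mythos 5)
Your proposal is correct and follows essentially the same route as the paper's own proof: both reduce the statement to the pattern of Proposition \ref{prop9} by unravelling $\mathcal U(\mathcal A_f)$ as an induced bundle of the form \eqref{induced}, identifying the Hom complex over $(A_{\BBB},\GGG,C^{\GGG}L_{\GGG})$ with $\mathrm{Hom}_{(G,C\fra g)}(\mathbb R,|\mathbf U_{\xi}(\mathcal A(F_q))|)$ via the (enriched) Seda adjunction, and invoking the main result of \cite{koszultw} to recognize the target as $\mathrm H_{\GGG^q_q}(F_q)$. Your explicit attention to the compatibility of the cone actions under the decomposition \eqref{decomp} spells out a verification the paper leaves implicit, but it is the same argument.
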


\begin{proof}
The argument follows the pattern of the proof of Proposition \ref{prop9}.

Indeed, as noted above, the differential graded vector bundle
$\mathcal U(\zeta)$ comes down to an object of the kind
\eqref{induced}.
Hence, given the trivial 
$\left(A,\GGG,C^{\GGG} L_{\GGG}\right)$-module 
$\eta$ having as underlying vector bundle on $\BBB$ 
the trivial real line bundle,
the chain complex
\[
\mathrm{Hom}_{\left(A_{\BBB},\GGG, C^{\GGG} L_{\GGG}\right)}(\eta, |\mathbf U(\zeta)|)
\]
comes down to the chain complex
\[
\mathrm{Hom}_{(G, C \fra g)}(\mathbb R, |\mathbf U_{\xi}(\VV)|)
\]
which, in turn, as noted above, computes the relative
$\mathrm{Ext}_{((G, C \fra g);\chain)}(\mathbb R, \VV)$.
We may suppose that $f$ is the fiber bundle associated with $\xi$
and the left $G$-manifold $F$.
Substituting $\mathcal A(F)$ for $\VV$ and applying 
one of the main results of \cite{koszultw}, we conclude that
the $\GGG$-equivariant de Rham cohomology 
$\mathrm H_{\GGG}(f)$ of $f$ comes down to the ordinary 
$G$-equivariant de Rham cohomology $\mathrm H_{G}(F)$ of $F$.
This completes the proof of Theorem \ref{U6}.
\end{proof}

\section {Comparison with other notions of Lie groupoid cohomology}
\label{compare}

A possible notion of groupoid cohomology
is given by the ordinary singular cohomology
of the geometric realization of the nerve of the groupoid
under consideration. This kind of cohomology
is explored in e.~g. \cite{crainic}, \cite{weinxu} and elsewhere.
Recall that any stack admits a groupoid presentation and that
the singular 
cohomology of a stack presented by a groupoid is defined to be the
cohomology of the nerve of the groupoid.
Likewise,
given a differentiable stack
presented by the Lie groupoid
$\Lambda$, the nerve $\mathcal N(\Lambda)$ of $\Lambda$
is a simplicial manifold---for $p \geq 0$, we will denote the homogeneous
degree $p$ constituent by $\Lambda_p$---, and the de Rham cohomology of 
the stack
is defined to be the total cohomology
of the cosimplicial de Rham complex
\begin{equation}
\left(\mathcal A^{p,q}(\Lambda),\delta,\varepsilon,\eta 
\right)_{p\geq 0,q \geq 0};
\label{bdss1}
\end{equation}
here
$\mathcal A^{p,q}(\Lambda)=\mathcal A^q(\Lambda_p)$,
the cosimplicial operators $\varepsilon$ and $\eta$
are induced by the ordinary simplicial
operators in the standard manner by dualization
and, for $p,q \geq 0$,
\[
\delta\colon \mathcal A^q(\Lambda_p)\to \mathcal A^{q+1}(\Lambda_p)
\]
is the ordinary de Rham operator. 
See e.~g. \cite{behreone} and the literature there.
We will refer to this cohomology as the {\em stack de Rham cohomology\/}
of $\Lambda$.
As already noted in the introduction,
when the underlying Lie groupoid $\Lambda$ is actually an ordinary Lie group,
viewed as a Lie groupoid with a single object, this cosimplicial
de Rham complex comes down to the construction developed and explored
by Bott, Dupont, Shulman and Stasheff \cite{bottone}, \cite{botshust},
\cite{duponone}, \cite{shulmone}.

Let $\GGG$ be a 
Lie groupoid and $f \colon M \to \BBB$ a 
left $\GGG$-manifold, cf. Section \ref{liegroupoids} above; thus $f$ is endowed
with a left $\GGG$-action
given by a commutative diagram of the kind \eqref{action11}
such that the obvious associativity constraint is satisfied.
Let
$\Lambda=\GGG \ltimes M$ be the associated action groupoid.
As noted above, in the literature, the cohomology
of the associated stack is defined as the singular cohomology 
of the geometric realization $B(\GGG \ltimes M)$ of the nerve
of $\GGG \ltimes M$.
In the  case where the Lie groupoid under discussion is a Lie group,
that geometric realization comes down to the 
ordinary homotopy quotient.
While the de Rham 
functor does not apply to $B(\Omega \ltimes M)$ directly,
the definition in terms of the
total cohomology
of the associated cosimplicial de Rham complex
yields a notion of de Rham cohomology for $\Omega \ltimes M$,
the {\em stack de Rham cohomology of\/}
$\Omega \ltimes M$.
 
More reprecisely, we will write the nerve of $\Omega \ltimes M$
as $\mathcal N(\GGG, f)$. The target map $\tar$ yields
the simplicial manifold 
\begin{equation}
\tar\colon \mathcal N(\GGG, f)\longrightarrow \BBB
\label{simpl}
\end{equation}
over
$\BBB$ where the notation $\tar$ is slightly abused.
The simplicial manifold \eqref{simpl} over $\BBB$ can be seen 
as the standard construction associated with
the corresponding comonad 
that is lurking behind.
The cosimplicial
de Rham complex \eqref{bdss1} now takes the form
\begin{equation}
\left(\mathcal A^{p,q}(\GGG,f),\delta,\varepsilon,\eta 
\right)_{p\geq 0,q \geq 0};
\label{bdss}
\end{equation}
here
$\mathcal A^{p,q}(\GGG,f)=\mathcal A^q(\mathcal N(\GGG,f)_p)$,
the cosimplicial operators $\varepsilon$ and $\eta$
are induced by the ordinary simplicial
operators in the standard manner by dualization,
and
\[
\delta\colon \mathcal A^q(\mathcal N(\GGG,f)_p)
\longrightarrow \mathcal A^{q+1}(\mathcal N(\GGG,f)_p)
\]
is the ordinary de Rham operator. 
The target map $\tar\colon \GGG \to \BBB$
endows $\mathrm{Id}\colon \BBB \to \BBB$
with a left $\GGG$-structure, the {\em trivial\/} structure.
For the special case where  $f=\mathrm{Id}\colon \BBB \to \BBB$,
a de Rham complex of the kind \eqref{bdss} is, indeed, suggested 
in Section 2 of
\cite{botshust} as a possible definition for the cohomology of
the Haefliger groupoid arising in Haefliger's 
classification theory for foliations but, beware,
no such suggestion is made in \cite{botshust}
concerning equivariant cohomology over that groupoid.

To relate the above constructions
with the approach to equivariant cohomology over a groupoid developed
in the present paper,
suppose now that $\GGG$ is locally trivial.
Let $\mathcal A_f\colon \mathcal A_f(M)\to \BBB$ be the differential graded
vector bundle over $\BBB$ having as fiber
$\mathcal A_f^{-1}(q)$ over $q \in \BBB$ the ordinary de Rham complex
$\mathcal A_f^{-1}(q)=\mathcal A(F_q)$ of the fiber $F_q=f^{-1}(q)$.
The construction developed in the present paper starts
from a
differential graded vector bundle of the kind
\begin{equation}
\mathcal A(\GGG,\mathcal A_f)\colon  \mathcal A(\GGG,
\mathcal A_f(M))
\longrightarrow \BBB 
\end{equation}
having as fiber $(\mathcal A(\GGG,\mathcal A_f))^{-1}(q)$
over 
$q \in \BBB$ the ordinary de Rham complex
\[
(\mathcal A(\GGG,\mathcal A_f))^{-1}(q)
=\mathcal A(\GGG^q,\mathcal A(F_q))
\] 
of $\mathcal A(F_q)$-valued forms on the manifold $\GGG^q$
of all morphisms $u$ in $\GGG$ having $q$ as its source, i.e. $\sou(u)=q$.
This is the first step of the construction, and iterating the construction
yields a cosimplicial differential graded
vector bundle $\mathbf U(\mathcal A_f)$ over $\BBB$.
Let $\eta$ be the trivial line bundle on $\BBB$, endowed with the trivial
$\GGG$-structure.
The cohomology is then that given by a complex of the kind
\begin{equation}
\mathrm{HOM}(\eta,|\mathbf U(\mathcal A_f)|)
\label{coho}
\end{equation}
where the notation 
$\mathrm{HOM}$ refers to the requisite structure present in the construction
and not spelled out here and where the notation $|\mathbf U(\mathcal A_f)|$
refers to the chain complex which results from totalization.

The chain complex arising from the cosimplicial de Rham complex
\eqref{bdss}
is substantially different from the chain complex
\eqref{coho}
developed in the present paper,
and the two constructions
are not isomorphic,
perhaps not even related in an obvious manner,
unless the groupoid $\GGG$ is a group.
Suffice it to point out here
that
the operation of applying the functor of taking the space of sections
to the construction in the present paper
yields an object which formally looks somewhat
like the cosimplicial de Rham complex \eqref{bdss}.  
Indeed, a first step towards unveiling
the connections between the two construction
consists, perhaps, in this observation:
given the fiber bundle
$\phi\colon E\to B$, let $C^{\infty}_\phi$ be the vector bundle
on $B$ having as fiber over $b \in B$ the algebra $C^{\infty}(F_b)$ of 
smooth functions on the fiber $F_b$; then, as a $C^{\infty}(B)$-module,
the space of sections of $C^{\infty}_\phi$ 
yields the smooth functions on the total space
$E$.

The cosimplicial de Rham complex
\eqref{bdss} does not arise by the operation
of iterating the construction of a relatively
injective module with respect to the groupoid $\GGG$, though,
unless $\GGG$ is a group
and hence {\em does not yield a kind of injective resolution\/}, and
a suitable {\em monad  leading to that kind of cohomology  is not in sight\/}
since one cannot naively dualize the comonad 
lurking behind the simplicial object \eqref{simpl}
as one does in the 
group case. Indeed, in the group case,
in a sense, the Hochschild-Mostow injective resolution arises
from application of the functor $C^{\infty}$ to the 
corresponding simplicial object, but such a claim cannot be made
over a general Lie groupoid and 
the difficulty was overcome only in \cite{mackeigh}
where the requisite (relatively) injective resolution was exhibited.
Working out the exact features of that kind of construction
and of the 
precise relationship thereof with 
the construction \eqref{coho} is
an interesting endeavor in its own.
However, as noted already in the introduction,
the notions of injective module and injective resolution are fundamental.
They have been developed by the masters to cope with situations where
projective modules are not available.
Without the notion of injective module there would be no sheaf
cohomology, for example. 

One can show in a roundabout way, however, that the
stack de Rham cohomology of $\GGG \ltimes M$
coincides with the 
$\GGG$-equivariant de Rham cohomology $\mathrm H_{\GGG}(f)$ of $f$
developed in the present paper..
Indeed, pick a base point  $q\in \BBB$; 
by Theorem \ref{U6},
restriction induces an isomorphism of
$\mathrm H_{\GGG}(f)$
onto the ordinary $\GGG^q_q$-equivariant de Rham cohomology 
$\mathrm H_{\GGG^q_q}(F_q)$ of the fiber $F_q=f^{-1}(q)$.
Since $\GGG$ is locally trivial,
by Morita equivalence, cf. e.~g. \cite{moermcru},
the action groupoid $\GGG \ltimes M$ is equivalent to
the action groupoid $\GGG^q_q \ltimes F_q$,
restriction induces an isomorphism from the stack de Rham cohomology
of $\GGG \ltimes M$ onto the 
stack de Rham cohomology
of $\GGG^q_q \ltimes F_q$, and the 
stack de Rham cohomology
of $\GGG^q_q \ltimes F_q$ amounts to the ordinary
 $\GGG^q_q$-equivariant cohomology $\mathrm H_{\GGG^q_q}(F_q)$ of  $F_q$.
Putting the various isomorphisms together we
conclude that  the
stack de Rham cohomology of $\GGG \ltimes M$
coincides with the 
$\GGG$-equivariant de Rham cohomology $\mathrm H_{\GGG}(f)$ of $f$.
Thus Theorem \ref{U6} implies that our notion of
$\GGG$-equivariant de Rham cohomology $\mathrm H_{\GGG}(f)$ of $f$
yields a {\em description of the
stack de Rham cohomology of $\GGG \ltimes M$
as a relative derived functor\/}.
This kind of reasoning also reveals that
the real singular cohomology of an action groupoid
of the kind  $\GGG \ltimes M$
(defined as the ordinary singular cohomology of
the geometric realization $B(\GGG \ltimes M)$ of the nerve
of $\GGG \ltimes M$)
coincides with the equivariant de Rham cohomology 
$\mathrm H_{\GGG}(f)$ of $f$.
Furthermore, this reasoning also implies in a roundabout manner
that, suitably interpreted,
the equivariant de Rham cohomology of a locally trivial Lie groupoid
satisfies Morita equivalence.

\section{Outlook}

Various questions remain open including the following ones:

\begin{enumerate}

\item Does the extension of Bott's decomposition lemma given 
in \cite{koszultw} still generalize to the present situation
in the sense that the functor $\mathcal U$ given as
\eqref{eqgroupoidco} induces a decomposition
of the corresponding standard construction into the standard construction
defining the ordinary Lie groupoid cohomology,
spelled out above as Theorem \ref{theo1}, and the standard construction
defining the equivariant Lie algebroid cohomology
that corresponds to the theory developed for an extension
of Lie-Rinehart algebras in Section \ref{ext}? 
If the answer to this question is yes,
what does this extension of the decomposition lemma then signify?

\item Is there a corresponding Chern-Weil construction and, if so, 
what does it signify? More precisely:
Let $G$ be a Lie group and $\fra g$ its Lie algebra.
Filtering the ordinary bar-de Rham bicomplex
of $G$ by the complementary degree we obtain 
a spectral sequence $(\mathrm E_r,d_r)$ explored by
Bott in his seminal paper \cite{bottone}. Bott has in particular shown that
this spectral sequence has
\[
\mathrm E_1=\mathrm H^*_{\mathrm{cont}}(G,\Sigm [(s^2\fra g)^*])
\]
where $\mathrm H^*_{\mathrm{cont}}$ refers to continuous cohomology
and where $\Sigm [(s^2\fra g)^*]$ denotes the symmetric algebra on the dual
of the double suspension $s^2\fra g$ of $\fra g$.
Let now $X$ be a left $G$-manifold
and use the notation $\Sigm^{\mathrm c}$ for the symmetric coalgebra
functor.
In \cite{koszultw} we have extended that spectral sequence to a 
spectral sequence $(\mathrm E_r,d_r)$, referred to in
\cite{koszultw} as the {\em generalized Bott spectral sequence\/},
 having
\[
\mathrm E_1
=\mathrm H^*_{\mathrm{cont}}(G,\mathrm{Hom}(\Sigm^{\mathrm c} [s^2\fra g], \mathcal A(X))).
\]
In particular when $G$ is compact,
$\mathrm H^*_{\mathrm{cont}}(G,\mathrm{Hom}(\Sigm^{\mathrm c} [s^2\fra g], \mathcal A(X)))$ comes down to the $G$-invariants
$\mathrm{Hom}(\Sigm^{\mathrm c} [s^2\fra g], \mathcal A(X))^G$.
But the elements of the latter are precisely homogeneous
$G$-invariant $\mathcal A(X)$-valued polynomial maps on
(the double suspension of) $\fra g$ and these maps, in turn, 
are precisely the 
Chern-Weil maps. Thus the spectral sequence $(\mathrm E_r,d_r)$ explains
the classical Chern-Weil construction.
In \cite{koszultw} we have pushed further: 
One of the results of that paper spells out
the standard complex calculating the $G$-equivariant cohomology of $X$
as a perturbation of the ordinary bar complex operator.
This raises the question to what extent these facts carry over
to general Lie groupoids.
In \cite{extensta} we have worked out 
a preliminary step at the infinitesimal level.

\item How do the notions of duality and modular class
developed in \cite{duality} extend to groupoids?

\item Is there a quasi-version of the theory of the kind developed 
at the infinitesimal level in 
\cite{quasi} and, if so, what does it signify? Is this theory related
with the issue spelled out in the previous item?

\item The functor
$\mathrm{Ext}_{\left(\left(A_{\BBB},\GGG,C^{\GGG} L_{\GGG}\right);\chainB\right)}
$ 
defined in Section \ref{equivgroupoid}
can also be viewed as a functor of the kind
$\mathrm{Ext}_{\left(\left(A_{\BBB},\mathcal G,C^{\GGG} L_{\GGG}\right);\chainB\right)}
$
where $\mathcal G$ is a suitable group of bisections of $\GGG$.
This kind of relative derived functor can presumably be constructed
for general Lie-Rinehart algebras.

\item Do the van Est spectral sequences generalize to Lie groupoids?
We recall briefly the situation for Lie groups.
Let $G$ be a connected Lie group with Lie algebra $\fra g$,
and let $V$ be a $G$-representation.
The first van Est spectral sequence $(E_r,d_r)$ associated with the data has
$E_2$ isomorphic to 
$\mathrm H^*_{\mathrm{diff}}(G,\mathrm H^*_{\mathrm{top}}(G,V))$
and converges to the Lie algebra cohomology $\mathrm H^*(\mathfrak g,V)$.
Here the constituent $\mathrm H^*_{\mathrm{top}}(G,V)$ can be replaced with
the ordinary $V$-valued cohomology $\mathrm H^*_{\mathrm{top}}(K,V)$
of a maximal compact subgroup $K$ of $G$.
The second van Est spectral sequence $(E_r,d_r)$ 
is associated with these data together with a choice
$K$ of compact subgroup $K$ of $G$, 
not necessarily maximal; the spectral sequence has
$E_2$ isomorphic to 
$\mathrm H^*_{\mathrm{diff}}(G,\mathrm H^*_{\mathrm{top}}(G/K,V))$
and converges to the relative Lie algebra cohomology
$\mathrm H^*(\mathfrak g,\mathfrak k; V)$ where $\mathfrak k$ is the Lie
algebra of $K$.
In particular, when $K$ is a maximal compact subgroup,
the homogeneous space $G/K$ is affine and hence contractible,
and the spectral sequence comes down to an isomorphism
$\mathrm H^*_{\mathrm{diff}}(G,V) \cong \mathrm H^*(\mathfrak g,\mathfrak k; V)$,
referred to usually as the van Est isomorphism. This isomorphism, in turn,
then yields a description of an edge map in the first van Est 
spectral sequence.
Which ones of these facts carries over to Lie groupoids and Lie algebroids
is an interesting question. In order to attack these questions
one must first define the requisite notions, for example
relative Lie algebroid cohomology.
A formally appropriate  way to do this might consist in developing
the requisite relative homological algebra, that is,
the appropriate monad.

\item 
What is the most appropriate form of Morita equivalence
relative to the notion of equivariant cohomology
with respect to a locally trivial Lie groupoid?
At the end of the previous section, we identified
the equivariant cohomology developed in the present paper 
with the corresponding stack de Rham cohomology in a roundabout manner.
Is there a more formal way to explain the identification
and to explain the reasons why the two theories are equivalent?

\item Can equivariant de Rham theory be developed 
as a relative derived functor
for Lie groupoids that are not locally trivial,
e.~g. for groupoids of the kind associated with a general foliation
which is not a fiber bundle?
What does the equivariant de Rham cohomology signify in this case?
Presumably the approach would then better be reworked in the 
language of sheaves, the present approach being the special case
where the requisite sheaves are fine.
Is there a description of the stack de Rham cohomology 
over a Lie groupoid that is not locally trivial
as a relative derived functor?
If the answer to both questions is yes, are the two theories the same
or do they differ?

\end{enumerate}
We hope to return to these issues elsewhere.


\begin{thebibliography}{19}
\bibitem{almekump}
R. Almeida and A. Kumpera:
{\pemphas Structure produit dans la cat\'egorie des 
alg\'ebro\"\i des de Lie.}
{\empha An. Acad. Brasil. Cienc.}
\textbf{53} (1981), 247--250

\bibitem{almolone} R. Almeida and P. Molino: {\pemphas Suites d'Atiyah et
feuilletages transversalement complets.} {\empha C. R. Acad. Sci.
Paris I} \textbf {300} (1985), 13--15

\bibitem{atiyaone} M. F. Atiyah:
{\pemphas Complex analytic connections in fibre bundles.}
{\empha Trans. Amer. Math. Soc.} \textbf {85} (1957), 181--207

\bibitem{behreone}  K. Behrend: 
{\pemphas On the de Rham cohomology of differentiable and algebraic stacks.\/} 
{\empha Adv. Math.\/} \textbf {198} (2005),  583--622,
{\tt math/0410255[math.AG]}

\bibitem{bottone}  R. Bott: 
{\pemphas On the Chern-Weil
homomorphism and the continuous cohomology of Lie groups.\/} 
{\empha Adv. Math.\/} \textbf {11} (1973),  289--303

\bibitem {botshust} R. Bott, H. Shulman, and J. Stasheff: {\pemphas
On the de Rham theory of certain classifying spaces.} Adv. Math.
\textbf{20} (1976), 43--56

\bibitem {brchroru}:
U. Bruzzo, L. Chirio, P. Rossi,  and V. N. Rubtsov:
{\pemphas Equivariant cohomology and localization for {L}ie algebroids\/.}
{\empha Funct. Anal. Appl.},
\textbf {43} (2009),
{18--296}, {\tt math/0506392[math.DG]}

\bibitem {canhawei}
A. Cannas de Silva and A. Weinstein:
{\pemphas Lectures on Geometric Models for Noncommutative Algebras.}
Berkeley Mathematics Lecture Notes vol. 10,
Amer. Math. Soc., Providence, R. I. 1999


\bibitem{crainic} M. Crainic: {\pemphas Differentiable and algebroid
cohomology, van Est isomorphisms and characterstic classes},
Comm. Math. Helv. \textbf{78} (2003), 681--721, {\tt math/0008064 [math.DG]}


\bibitem{doldpupp} 
A. Dold und D. Puppe: 
{\pemphas Homologie 
nicht-additiver Funktoren. Anwendungen.\/} 
Annales de l'Institut
Fourier \textbf{11} (1961), 201--313

\bibitem{duponone} J. L. Dupont: {\pemphas Simplicial de Rham
cohomology and characteristic classes of flat bundles.} 
Topology \textbf{15} (1976), 233--245

\bibitem{duskinon}
J.  Duskin:
{\pemphas Simplicial methods and the interpretation of \lq\lq triple\rq\rq\ 
cohomology.\/}
Memoirs Amer. Math. Soc.
\textbf{163} (1975)

\bibitem{godebook}
R. Godement:
 {\bemphas Topologie alg\'ebrique et th\'eorie des faisceaux.\/}
Hermann, Paris (1958)

\bibitem{higgbook}
P. J. Higgins:
{\bemphas Categories and Groupoids.}
Van Nostrand, Princeton, N.J.
(1971)

\bibitem{higgmack}
P. J. Higgins and K. Mackenzie:
{\pemphas Algebraic constructions in the category of Lie algebroids.}
{\empha J. of Algebra}
\textbf{129} (1990), 194--230

\bibitem{higmathr}
P. J. Higgins and K. Mackenzie:
{\pemphas Duality for base-changing morphisms of vector bundles,
modules, Lie algebroids and Poisson structures.}
{\empha Math. Proc. Camb. Phil. Soc.}
\textbf{114} (1993), 471--488

\bibitem{hilstatw}
P. J. Hilton and U. Stammbach:
{\bemphas A Course In Homological Algebra.}
Graduate texts in Mathematics, vol. 4.
Springer, Berlin $\cdot$ Heidelberg $\cdot$ New York
(1971)

\bibitem{hochsone}  G. Hochschild:
{\pemphas Relative homological algebra.}
{\empha Trans. Amer. Math. Soc.} \textbf{82} (1956),
 246--269

\bibitem{hochmost} G. Hochschild and G. D. Mostow:
{\pemphas Cohomology of Lie groups.\/}  Illinois J.  of Math. \textbf {6} 
(1962), 367--401

\bibitem{poiscoho} J. Huebschmann:
{\pemphas Poisson cohomology and quantization.} {\empha J. reine
angew. Math.} \textbf{408} (1990), 57--113

\bibitem{souriau} J. Huebschmann:
{\pemphas On the quantization of Poisson algebras.} In: Symplectic
Geometry and Mathematical Physics, Actes du colloque en l'honneur
de Jean-Marie Souriau, P. Donato, C. Duval, J. Elhadad, G.~M.
Tuynman, eds.; Progress in Mathematics, Vol. 99, Birkh\"auser
Verlag, Boston $\cdot$ Basel $\cdot$ Berlin,  204--233 (1991)

\bibitem{lradq} J. Huebschmann:
{\pemphas Lie-Rinehart algebras, descent, and quantization.}
{\empha Fields Institute Communications}  \textbf{43} (2004),
295--316, {\tt math.SG/0303016}

\bibitem{bv} J. Huebschmann:
{\pemphas Lie-Rinehart algebras, Gerstenhaber algebras, and
Batalin- Vilkovisky algebras.} {\empha Annales de l'Institut
Fourier}  \textbf{48} (1998), 425--440, {\tt math.DG/9704005}

\bibitem{extensta} J. Huebschmann:
{\pemphas Extensions of Lie-Rinehart algebras and the Chern-Weil
construction.} In: Festschrift in honour of Jim Stasheff's 60'th
anniversary, {\empha Cont. Math.}  \textbf{227} (1999), 145--176,
{\tt math.DG/9706002}

\bibitem{duality} J. Huebschmann:
{\pemphas Duality for Lie-Rinehart algebras and the modular
class.} {\empha J. reine angew. Math.}  \textbf{510} (1999), 103--159, 
{\tt math.DG/9702008}

\bibitem{banach} J. Huebschmann:
{\pemphas  Differential Batalin-Vilkovisky algebras arising from
twilled Lie-Rinehart algebras.} {\empha Banach center
publications}  \textbf{51} (2002), 87--102

\bibitem{quasi} J. Huebschmann:
{\pemphas Higher homotopies and Maurer-Cartan algebras:
quasi-Lie-Rinehart, Gerstenhaber-, and Batalin-Vilkovisky
algebras.}  In: The Breadth of Symplectic and Poisson Geometry,
Festschrift in Honor of Alan Weinstein, J. Marsden and T. Ratiu,
eds., Progress in Mathematics, Vol. 232, Birkh\"auser Verlag,
Boston $\cdot$ Basel $\cdot$ Berlin,  237--302 (2004), {\tt
math.DG/0311294}

\bibitem{koszul} J. Huebschmann: {\pemphas Homological perturbations,
equivariant cohomology, and Koszul duality}, 
{\pemphas Documenta math. (to appear)},
{\tt math.AT/0401160}

\bibitem{koszultw} J. Huebschmann: {\pemphas  
Relative homological algebra, homological perturbations,
equivariant de Rham theory, and Koszul duality}, {\tt math.AG/0401161}

\bibitem{mackone} K. C. H. Mackenzie:
{\bemphas Lie groupoids and Lie algebroids in differential geometry},
London Math. Soc. Lecture Note Series, \textbf{124},
Cambridge University Press, Cambridge, England
(1987)

\bibitem{mackbtwo} K. C. H. Mackenzie:
{\bemphas General theory of Lie groupoids and Lie algebroids},
London Math. Soc. Lecture Note Series, \textbf{213},
Cambridge University Press, Cambridge, England
(2005)

\bibitem{mackeigh}
K. A. Mackenzie:
{\pemphas Rigid cohomology of topological groupoids.\/}
J. Austr. Math. Soc. \textbf{26} (1978), 277--301

\bibitem{maclafiv}
S. Mac Lane:
{\pemphas Homologie des anneaux et des modules.}
In: Colloque de topologie alg\'ebrique, Louvain
(1956), 55--80

\bibitem{maclaboo} {S. MacLane}:
{\bemphas Homology.}
{Die Grundlehren der mathematischen Wissenschaften, Band 114},
Springer-Verlag, Berlin (1967)

\bibitem{maclbotw}
{S. Mac Lane}:
{\bemphas Categories for the working mathematician; second edition},
{Graduate Texts in Mathematics}
\textbf{5},
{Springer-Verlag},
{New York} {(1998)}

\bibitem{moermcru}
{I. Moerdijk and J. Mrcun}:
{\bemphas Introduction to foliations and Lie groupoids},
{Graduate Cambridge Studies in Advanced Mathematics}
\textbf{5},
{Cambridge University Press},
{Cambridge} {(2003)}

\bibitem{rinehart} G. Rinehart:
{\pemphas Differential forms for general commutative algebras.}
{\empha  Trans. Amer. Math. Soc.}  \textbf{108} (1963), 195--222

\bibitem{sedaone} A. K. Seda:
{\pemphas An extension theorem for transformation groupoids.}
Proc. Royal Irish Acad. \textbf{75}A (1975), 255--262

\bibitem{gsegatwo} G. B. Segal:
{\pemphas Classifying spaces and spectral sequences.}
Publ. Math. I. H. E. S. \textbf{34} (1968), 105--112

\bibitem{shulmone} H. B. Shulman: {\bemphas Characteristic classes
and foliations.} Ph. D. Thesis,  University of
California (1972)

\bibitem{stashsev} J. D. Stasheff: 
{\pemphas Continuous cohomology of groups and classifying spaces.}
{Bull. Amer. Math. Soc.}
\textbf{84} (1978), {513--530}

\bibitem{vanesthr} W. T. Van Est:
{\pemphas Alg\`ebres de Maurer-Cartan et holonomie.}
{\empha Ann. Fac. Sci. Toulouse Math.}
\textbf{5} (suppl.),  93--134  (1989)

\bibitem{weinxu}
A. Weinstein and P. Xu:
{\pemphas Extensions of symplectic groupoids and quantization.}
{\empha J. reine angew. Math.}
\textbf {417} (1991), 159--189 

\end{thebibliography}
\end{document}